\documentclass{amsart}

\usepackage[a4paper]{geometry}
\usepackage[T1]{fontenc}
\usepackage{amssymb, amsmath,mathtools}
\usepackage{mathrsfs}
\usepackage{amscd}
\usepackage{verbatim}
\usepackage{stmaryrd}
\usepackage[dvipsnames]{xcolor}
\usepackage{enumerate}
\usepackage{xfrac}
\usepackage{bbm}
\usepackage{graphicx}
\usepackage{bbm}
\usepackage{enumitem}
\usepackage{accents}

\usepackage{mathtools}
\mathtoolsset{showonlyrefs}

\usepackage[colorlinks,linkcolor={blue},citecolor={blue},urlcolor={purple},]{hyperref}

\usepackage{tabularx}
\newcolumntype{L}{>{\arraybackslash}X}
\usepackage{multirow}

\theoremstyle{plain}
\newtheorem{theorem}{Theorem}[section]
\theoremstyle{remark}
\newtheorem{remark}[theorem]{Remark}

\newtheorem{convention}[theorem]{Convention}
\theoremstyle{plain}
\newtheorem{corollary}[theorem]{Corollary}
\newtheorem{lemma}[theorem]{Lemma}
\newtheorem{proposition}[theorem]{Proposition}
\newtheorem{definition}[theorem]{Definition}

\newtheorem{assumption}[theorem]{Assumption}

\numberwithin{equation}{section}

\def\N{{\mathbb N}}
\def\Z{{\mathbb Z}}

\def\R{{\mathbb R}}

\def\one{{ \mathbbm{1} }}

\renewcommand{\P}{\mathbb{P}}
\newcommand{\E}{\mathbb{E}}
\newcommand{\T}{\mathbb{T}}

\newcommand{\cF}{\mathcal{F}}

\newcommand{\norm}[1]{\left\lVert #1 \right\rVert}

\def\eps{\varepsilon}
\def\<{{\langle}}
\def\>{{\rangle}}

\newcommand{\dd}{\,\mathrm{d}}

\renewcommand{\leq}{\leqslant}
\renewcommand{\geq}{\geqslant}

\DeclareMathOperator*{\dist}{\textup{d}}

\DeclareMathOperator*{\supp}{\textup{supp}}

\DeclareMathOperator{\even}{\mathrm{even}}

\allowdisplaybreaks

\title[Non-Selection of Lagrangian Trajectories in the Zero-Noise Limit]{Non-Selection of Lagrangian Trajectories in the Zero-Noise Limit for a Class of Stochastic Regularizations}
\author[L. Galeati]{Lucio Galeati$^\star$}
\author[F. Giovagnini]{Filippo Giovagnini$^\dagger$}
\author[M. Sorella]{Massimo Sorella$^\ddagger$}
\thanks{$^\star$University of L'Aquila. lucio.galeati@univaq.it}
\thanks{$^\dagger$Imperial College London. f.giovagnini23@imperial.ac.uk }
\thanks{$^\ddagger$Imperial College London. m.sorella@imperial.ac.uk}

\begin{document}

\begin{abstract}
We prove the lack of selection in the zero-noise limit for solutions to SDEs driven by a divergence-free, H\"older continuous vector field with exponent $\alpha\in(0,1)$, arbitrarily close to $1$ but fixed. The result applies to a broad class of regularizing additive noises, including fractional Brownian motion and stable Lévy processes.
The proof combines pathwise Lagrangian arguments, based on the analysis of the deterministic flows associated to mixing velocity fields, with probabilistic estimates coming from the stochastic sewing lemma. This allows to show that lack of selection happens simultaneously on a large set of initial data, whose complement has arbitrarily small Lebesgue measure.\\[1ex]
\textbf{MSC classification:} 60H10, 76B03, 60H50, 60G22.\\[1ex]
\textbf{Keywords:} ODE non-uniqueness; Regularization by noise; Lack of selection by vanishing noise; Mixing by shear flows.
\end{abstract}

\maketitle

\setcounter{tocdepth}{1}
\tableofcontents

\section{Introduction}

A classical question in ODE theory is to understand under which conditions on the vector field $u$
one can ensure the existence of a well-defined dynamics for the ODE problem (set on $\R^d$ or the torus $\T^d$)
\begin{align*}
    \dot x_t = u(t,x_t).
\end{align*}
A standard assumption is to require Lipschitz continuity of $u$ in the $x$-variable, possibly with some integrability in time. Slightly more refined conditions involve e.g. one-sided Lipschitz bounds~\cite{LioSee2024} or an Osgood modulus of continuity~\cite{AB2008}.
On the other hand, it is well-known that the ODE can admit multiple solutions as soon as $u$ is only $\alpha$-H\"older continuous, for any $\alpha<1$.

A more nuanced way to phrase the problem is then to understand whether one can provide interesting \emph{selection criteria} to distinguish some more relevant solutions among the possibly infinitely many ones.
A probabilistic way to approach this problem is by studying the \emph{vanishing noise limit} of the SDEs
\begin{equation}\label{eq:intro_SDE_v0}
    \dd X^\kappa_t = u(t,X^\kappa_t) \dd t + \kappa \dd W_t,
\end{equation}
where $W$ is an additive stochastic perturbation, as $\kappa\to 0^+$.
Roughly speaking, solutions obtained in this way should be stable under noisy perturbations and therefore more likely to be observed in realistic models; it is also of interest to understand whether a \emph{true superposition solution} can arise in the limit as $\kappa\to 0^+$, namely whether the laws of $X^\kappa$ do not concentrate on a single point, but rather converge to a genuine probability distribution supported on the solutions to the ODE.
For a deeper discussion on vanishing  noise limits, we refer to~\cite{Flandoli2015} and in particular Section~1.4 therein; let us point out in passing that this procedure bears strong similarities with vanishing viscosity solutions to PDE problems, see e.g. \cite{BB2005,CIL1992}.

In dimension $d=1$, the vanishing noise problem of the deterministic flow was successfully understood by Bafico and Baldi~\cite{BafBal1982}, for autonomous velocity fields $u$ with isolated zeros, displaying non-uniqueness scenarios in the style of Peano paintbrush phenomena.
In this case, when $W$ is sampled as Brownian motion, the limit of $Law(X^\kappa)$ can be completely characterized and shown to be supported only on extremal solutions to the ODE problem. This result has been revisited, e.g. in~\cite{DelFla2014,Trevisan2013}.
However, extensions of these results to dimensions $d\geq 2$ have proven to be considerably more challenging.
There are relatively few results available, see~\cite{DelMau2019,PP2018,KulPil2020}, concerning drifts $u$ with strong structural assumptions which are hard to verify in practice, typically Lipschitz regular away from a lower-dimensional set (a point or a hyperplane).

In this paper we show that, for a large class of stochastic perturbations, in dimension $d\geq 2$, one cannot expect the existence of a unique vanishing noise limit, under mere H\"older regularity and incompressibility assumptions on the velocity field $u$.
To state our main result, let us make our setting more precise.
We study the stochastic differential equation in integral form
\begin{equation}\label{eq:intro_SDE}
    X_t^\kappa
    =
    x+\int_0^t u(r,X_r^\kappa)\dd r+\kappa W_t ,
\end{equation}
where \(u:[0,1]\times\T^d\to\R^d\), \(d\geq 2\), is divergence-free (in the sense of distributions) and H\"older continuous, \(\kappa> 0\) is the noise intensity, and \((W_t)_{t\geq 0}\) is a stochastic process satisfying the following:
 
\begin{assumption}\label{ass:noise}
    The $\R^2$-valued stochastic process $W$ satisfies either of the following:
    \begin{itemize}
        \item $W$ is a fractional Brownian motion (fBm) with Hurst parameter $H\in (0,+\infty)\setminus\N$, or
        \item $W$ is a $\beta$-stable Lévy process with $\beta \in (0,2]$.
    \end{itemize}    
\end{assumption}

The above class of processes includes many different behaviours, including both Markovian and non-Markovian processes, which can be either of pure jump type or H\"older continuous with arbitrary exponent. Let us stress that Brownian motion corresponds to both $H=1/2$ and $\beta=2$.

Given $W$ satisfying Assumption~\ref{ass:noise}, we correspondingly define a regularity threshold $\alpha_W\geq 0$ by
\begin{equation}\label{eq:noise_regularity_threshold}
    \alpha_W:=
    \begin{cases}
        \max\{1-\frac{1}{2H},0\} \quad & \text{if $W$ is a fBm,}\\
        1-\frac{\beta}{2} & \text{if $W$ is a Lévy process.}
    \end{cases}
\end{equation}
It is known that, if $u\in C^\alpha([0,1]\times\T^2)$ with $\alpha>\alpha_W$, then for any noise intensity $\kappa>0$, the SDE~\eqref{eq:intro_SDE}
is strongly well-posed in a \emph{regularization by noise} fashion; see Section~\ref{sec:regbynoise_overview} for a general overview.
In this case, we denote by $X^\kappa_t(x,\omega)$ the stochastic flow associated to~\eqref{eq:intro_SDE}, defined on the canonical probability space $(\Omega,\cF,\P)$.
We study the vanishing noise limit $\kappa \to 0$ of the unique stochastic flow and prove that, in general, it does not converge to a unique deterministic flow associated with~\eqref{eq:intro_SDE} when $\kappa=0$, for velocity fields of H\"older regularity with $\alpha<1$.

\begin{theorem}
    \label{th:main_theorem}
    Let $W$ satisfy Assumption~\ref{ass:noise} and $\alpha_W$ be given by~\eqref{eq:noise_regularity_threshold}.
    Then for any $\alpha \in (\alpha_W,1)$ and $\eps \in (0,1)$, there exist a divergence-free drift $u \in C^\alpha ([0,1]\times \T^2;\R^2)$, a set $A_\eps \subset \T^2$ with $|A_\eps| \geq 1 - \eps $, a vanishing sequence $(\kappa_q )_{q}$, and a constant $c_\eps >0$ such that
    \begin{equation}\label{eq:main_theorem}
        \P\left( \liminf_{q\to\infty}  \dist  (X_1^{\kappa_{2q}} (x, \omega) , X_1^{\kappa_{2q+1}} (x, \omega))  \geq c_\eps> 0\ \text{ for Lebesgue a.e } x\in A_\eps\right)=1.
    \end{equation}
    Furthermore, for any fixed $x \in A_\eps$, one can find a subsequence $\{\kappa_{q'}\}_{q'}$ and deterministic points $y,y'$ in $\T^2$ such that
    \begin{equation}\label{eq:main_theorem_eq2}
        \P\left(\lim_{q'\to\infty} X_1^{\kappa_{2q'}} (x, \omega)=y\neq y'=\lim_{q'\to\infty} X_1^{\kappa_{2q'+1}} (x, \omega)\right)=1.
    \end{equation}
\end{theorem}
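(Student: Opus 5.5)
The plan is to build $u$ as a superposition of time-localized shear flows, following the mixing constructions for deterministic flows. Each piece is active on a short time interval $I_q=[t_q,t_{q+1}]$ with $t_q\uparrow 1$, and consists of a divergence-free shear of frequency $\lambda_q\to\infty$ and amplitude $a_q$, alternating between horizontal and vertical directions (e.g.\ $u=a_q\sin(2\pi\lambda_q x_2)e_1$, then the analogous vertical shear). Choosing $a_q\lambda_q^\alpha\lesssim 1$ with $\lambda_q$ growing super-exponentially keeps $u\in C^\alpha$. The deterministic Lagrangian flow on each block is then explicit. The parameters are tuned so that on $I_q$ the flow displaces most points by order one relative to the scale $\lambda_{q-1}^{-1}$ of the previous block. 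The noise intensities $\kappa_q$ are then interleaved with the scales: $\kappa_q$ is chosen so that noise is negligible at spatial scale $\lambda_{q}^{-1}$ over $I_q$, but dominant at scales $\lambda_{q'}^{-1}$ for $q'>q$.

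The core estimate is a two-regime comparison for $X^{\kappa_q}$.

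\textbf{(a) Before the critical block.} On blocks $I_{q'}$ with $q'\le q$, $X^{\kappa_q}$ stays close to the deterministic trajectory. This is pathwise and uses Grönwall-type estimates with the local Lipschitz constant $a_{q'}\lambda_{q'}$. For all but a small-measure set of initial data, the trajectory avoids the regions where the shear has large gradient relative to the perturbation, and $\kappa_q\sup|W|$ is small.

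\textbf{(b) After the critical block.} On blocks $q'>q$, the oscillatory drift is averaged out by the noise. I would bound
\[
\int_{I_{q'}} u(r,X_r)\,\dd r
\]
via the stochastic sewing lemma, using the regularizing effect of $W$ (for fBm, local nondeterminism; for stable processes, heat-kernel-type smoothing). The bound should be of order $a_{q'}\,(\kappa_q^{-1}\lambda_{q'}^{-1})^{\gamma}$ times a power of $|I_{q'}|$. The condition $\alpha>\alpha_W$ is exactly what makes this summable in $q'$. Hence $X^{\kappa_q}$ essentially freezes after block $q$, up to $o(1)$ as $q\to\infty$.

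Consequently $X^{\kappa_q}_1(x)\approx \phi_{t_q}(x)$ up to small errors, where $\phi$ is the deterministic flow truncated at time $t_q$, with high probability. Borel–Cantelli over $q$ upgrades this to almost sure convergence, provided the error probabilities are summable by the choice of $\kappa_q$.

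For the non-selection we need the truncated deterministic flows at even and odd stages to separate by $c_\eps$ on a set of measure $\ge1-\eps$. The design is made so that blocks alternate: the even-indexed shears move a point by a fixed distance $\delta$ in one direction on most of the torus, and odd-indexed ones undo or complement this. The alternation is arranged in the fashion of a "ping-pong": block $2q+1$ displaces points of order one in the $e_1$ direction on a set of measure close to 1, given that they lie in good strips. A measure-preserving counting argument keeps the good set of total measure $\ge 1-\eps$ uniformly, by summing the bad-strip measures $\sim\sum_q\eps 2^{-q}$. This gives the liminf bound~\eqref{eq:main_theorem} simultaneously for a.e.\ $x\in A_\eps$, with probability one.

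For~\eqref{eq:main_theorem_eq2}, fix $x\in A_\eps$. The deterministic points $\phi_{t_{2q}}(x)$ and $\phi_{t_{2q+1}}(x)$ lie in the compact $\T^2$, so along a subsequence they converge to $y$ and $y'$, with $\dist(y,y')\ge c_\eps$. Combined with the almost sure approximation this gives the claim.

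The main obstacle is the stochastic sewing estimate in regime (b), uniformly over the whole noise class. It must hold conditionally along the trajectory, which itself depends on the drift of the earlier blocks. It must also produce summable bounds with probabilities small enough for Borel–Cantelli while the frequencies grow super-exponentially. I would first treat the fBm case with a conditional-expectation sewing and a Gaussian smoothing kernel, then adapt it to stable processes through the semigroup bounds on $\E[f(x+\kappa(W_t-W_s))\,|\,\cF_s]$.
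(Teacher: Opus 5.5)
Your architecture cannot produce non-selection, and the failure is structural. You place the only singular time at the terminal time $t=1$ ($t_q\uparrow 1$) and read off the limit from the flow truncated at $t_q$. But $u\in C^\alpha([0,1]\times\T^2)$ is bounded, and on each $[0,T]$ with $T<1$ your drift is a finite sum of smooth blocks, hence Lipschitz in space. So the ODE flow $\phi_t(x)$ is unique on $[0,1)$ and $\|u\|_{L^\infty}$-Lipschitz in $t$. This gives $|\phi_{t_{2q+1}}(x)-\phi_{t_{2q}}(x)|\le \|u\|_{L^\infty}(t_{2q+1}-t_{2q})\to 0$. The even and odd truncated flows therefore have the same limit, and no block living on an interval of vanishing length can move points by order one, as your ping-pong requires.

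The same argument kills the statement for the SDE itself. Pathwise Gr\"onwall on $[0,T]$ gives $|X^\kappa_T(x)-\phi_T(x)|\le \kappa\|W\|_\infty \exp(\int_0^T\|\nabla u_r\|_{L^\infty}\dd r)$. The remaining interval contributes at most $2\|u\|_{L^\infty}(1-T)+2\kappa\|W\|_\infty$. Letting $\kappa\to 0$ and then $T\to 1$ shows $X^\kappa_1(x)\to\phi_1(x)$ almost surely for every $x$. Any bounded drift whose only singularity sits at the final time has selection.

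The missing idea is that the singularity must sit at an interior time and be followed by a mechanism that amplifies fine-scale information back to order one. The paper mixes on $[0,1/2)$ and then \emph{unmixes} on $(1/2,1]$ by time reflection, $u(t)=-u(1-t)+\sum_q \eta_{q,3}(1-t)v_{\mathrm{swap},q}$. Noise of intensity $\kappa_q=(a_qa_{q+1})^{1/2}$ effectively replaces $u$ by $u_q=u\,\one_{[t_q,1-t_q]^c}$, cutting out the fine blocks on \emph{both} sides of $1/2$. The constant swap shifts $a_{p+1}e_1$ then make $X^q_1$ send $A\cap\mathcal{C}^{\even}_{a_0}$ into $F_e$ or $F_o$ according to the parity of $q$; this is the source of the separation $2\ell_1$.

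The estimates are organized accordingly:
\begin{itemize}
\item pathwise stability on $[0,t_q]$ and $[1-t_q,1]$;
\item stochastic sewing only on the critical block $J_{q,1}$ and its reflection, where the amplitude $a_q\gg \ell_q/q$ but the frequency $a_{q+1}^{-1}\gg\kappa_q^{-1}$;
\item a trivial $L^\infty$ bound $\lesssim q^2 a_{q+1}$ on $[s_q,1-s_q]$.
\end{itemize}
No sewing over infinitely many later blocks is needed. Also, $\alpha>\alpha_W$ enters only through well-posedness of the SDE; summability comes from the choice of $\gamma$.

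Even with the right architecture, your regime (a) would not close. Reaching $\alpha$ close to $1$ forces $\lambda_{q+1}\approx\lambda_q^{1+\delta}$. For sinusoidal shears the Gr\"onwall factor is then of size $\exp(c\,\lambda_q/\lambda_{q-1})$, which dwarfs the scale separation available for choosing $\kappa_q$. The paper avoids Gr\"onwall altogether. It uses mollified piecewise-linear profiles, which are locally constant off strips of width $\sim\ell_q$, and defines $A$ so that trajectories stay off those strips. This yields the exact cancellation $u(s,X^{\kappa_q}_s(y))=u_q(s,X^q_s(x))$.
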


\begin{remark}
    Let us stress several features and consequences of Theorem~\ref{th:main_theorem} and the associated construction of $u$:
    \begin{itemize}
        \item[i)] For any $x\in A_\eps$, the ODE admits multiple trajectories starting from $x$.
        \item[ii)] By~\eqref{eq:main_theorem}, for $\P$-a.e. $\omega\in \Omega$, simultaneously for Lebesgue a.e. $x\in A_\eps$, the sequence $\{X^{\kappa_q}_1(x,\omega)\}$ is not Cauchy and does not converge to a unique limit. In particular, one cannot expect selection to happen in the sense of \emph{Regular Lagrangian Flows}.
        \item[iii)] In fact, even selection at the level of laws is not possible: for every $x\in A_\eps$, the sequence $Law(X^\kappa_t(x))$ is tight but does not admit a unique limit; see Corollary~\ref{cor:lack_selection_laws} for the precise statement.
        \item[iv)] In the last part of Theorem~\ref{th:main_theorem}, the points $y$, $y'$ are deterministic. Indeed, one can extract subsequences $(\kappa_{q'})_{q'}$ such that $X^{\kappa_{q'}}_t(x,\omega)$ converge $\P$-a.s. to distinct deterministic limits, excluding the necessity to see true superposition solutions as $\kappa\to 0^+$; see Proposition~\ref{prop:preliminary_main_thm} and Corollary~\ref{cor:vanishing_variance} for more precise statements.
    \end{itemize}
\end{remark}

\begin{remark}
    In order for the stochastic flow $X^\kappa(x,\omega)$ to exist, condition $\alpha>\alpha_W$ appears to be necessary.
    Very recently, counterexamples to pathwise uniqueness have been shown in~\cite{rowan2026}, for SDEs driven by fBm of parameter $H\in (0,1)$, in the full range $\alpha<1-\frac{1}{2H}$.
\end{remark}

In recent years, there has been an increasing interest in understanding the behaviour of solutions $X^\kappa$ in the vanishing noise limit $\kappa\to 0^+$, due to its connection to phenomenologies of \emph{passive scalar turbulence}. In the Brownian case $\beta=2$, whenever $u$ is divergence-free and regular enough (e.g. $u$ continuous and bounded), it is well-known that the solution $\rho^\kappa$ to the advection-diffusion PDE\footnote{The coefficient $\kappa^2/2$ in front of the Laplacian, instead of the usual $\kappa$, is due to our convention on the noise intensity $\kappa$ in the SDE~\eqref{eq:intro_SDE}.}
\begin{equation}\label{eq:intro_PDE}
    \partial_t \rho^\kappa + u\cdot\nabla \rho^\kappa = \frac{\kappa^2}{2} \Delta \rho^\kappa, \quad \rho\vert_{t=0}=\rho_{in}
\end{equation}
can be represented by means of the associated stochastic flow $X^\kappa$; specifically, one has
\begin{equation}\label{eq:formula-fractional-lap}
    \rho_{\kappa}(t,x) 
    = \mathbb{E}\big[((X_t^\kappa)_\sharp \rho_{in})(x)\big]
    = \E\big[\rho_{in}((X^\kappa_t)^{-1}(x))\big].
\end{equation}
A link between the properties of \emph{anomalous dissipation} of energy of $\rho^\kappa$, \emph{spontaneous stochasticity} of $X^\kappa$ and non-uniqueness results has been observed in \cite{DriEyi2017,Rowan2024}, based on Fluctuation-Dissipation relation and the superposition principle: there exist $\rho_{in}\in L^2(\T^d)$ and a sequence $\kappa_q\to 0^+$ such that
\begin{align*}
    \lim_{q\to\infty} \kappa_q^2 \int_0^1 \| \nabla \rho^{\kappa_q}_t\|_{L^2}^2 \dd t>0
\end{align*}
if and only if there exists a set $C\subset \T^d$ of positive Lebesgue measure and a sequence $\kappa_{q'}\to 0^+$ such that $Law(X^{\kappa_{q'}}(x))$ does not converge to a Dirac mass as $q'\to\infty$, for every $x\in C$.
In particular, anomalous dissipation implies both non-uniqueness of weak solutions to the backward inviscid transport PDE and non-uniqueness of backward trajectories for the ODE problem, for any $x\in C$.
We will comment more thoroughly on this point in Section~\ref{subsec:literature}; for the moment, let us point out that, as a consequence of representation formulas in the style of~\eqref{eq:formula-fractional-lap} (see Theorem~\ref{thm:SWP_stable}) and Theorem~\ref{th:main_theorem}, we obtain the following corollary about non-selection for PDEs. It generalizes the result from~\cite{CCS2023} to the case of vanishing fractional viscosity, for any exponent $\beta\in (0,2]$. However, a key distinction from that result is that our non-selection criterion yields two distinct conservative solutions in the vanishing-diffusivity limit, whereas the result in \cite{CCS2023} yields one conservative solution and one dissipative solution.

\begin{corollary}
\label{cor:vanishing-viscosity-PDE}
Let $\beta\in (0,2]$ fixed. Then for any $\alpha \in (\alpha_W,1)$, there exist a divergence-free drift $v \in C^\alpha ([0, 1]\times \T^2;\R^2)$, a vanishing sequence $(\kappa_q)_q$ and a non-negative, smooth initial condition $\rho_{in} \in C^\infty(\T^2;[0,+\infty))$ such that the sequence $(\rho^{\kappa_q})_q$ of solutions to the fractional advection-diffusion PDE
\begin{equation}
\label{eq:PDE_fractional_laplacian}
\partial_t \rho^{\kappa_q} + v \cdot\nabla \rho^{\kappa_q}  = -\kappa_q^\beta |\nabla|^\beta \rho^{\kappa_q}, \quad
\rho^{\kappa_q}\vert_{t=0} = \rho_{in}
\end{equation}
does not admit a unique limit as $q\to\infty$; more precisely, there exist two distinct solutions $\rho, \bar\rho$ to the transport PDE
\begin{equation}\label{eq:transport_PDE}
    \partial_t \rho + v\cdot\nabla \rho = 0, \quad
\rho\vert_{t=0} = \rho_{in}
\end{equation}
such that
$$\rho^{\kappa_{2q}}   \overset{\ast}{\rightharpoonup} \rho \neq \bar \rho  \overset{\ast}{\leftharpoonup} \rho^{\kappa_{2q+1}}$$
where the convergence is weak-$\ast$ in $L^\infty_{t,x}$. Furthermore, along the sequence $(\kappa_q)_q$ there is no anomalous dissipation of energy, namely
$$ \lim_{q\to \infty} \kappa_q^\beta \int_0^1 \| |\nabla |^{\beta/2} \rho^{\kappa_q}_t \|_{L^2}^2 \dd s =0 \,.$$
\end{corollary}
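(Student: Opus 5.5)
The plan is to derive the corollary from Theorem~\ref{th:main_theorem}, applied with $W$ a $\beta$-stable Lévy process, combined with the representation formula of Theorem~\ref{thm:SWP_stable}.

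\textbf{Scaling and choice of drift.} The generator of $\kappa W$ is $-c_\beta\kappa^\beta|\nabla|^\beta$. After normalizing $W$ so that $c_\beta=1$ (a harmless rescaling of the sequence $\kappa_q$), Theorem~\ref{thm:SWP_stable} gives a representation of the PDE solution. I will use the time-reversed version: set $v(t,x):=-u(1-t,x)$, where $u$ is the drift from Theorem~\ref{th:main_theorem}. Then
\[
\rho^{\kappa}(1,x)=\E\big[\rho_{in}(X^\kappa_1(x))\big],
\]
where $X^\kappa$ is the flow driven by $u$. This uses the symmetry of the stable process, $-W\overset{d}{=}W$, and the fact that $\kappa W$ is independent of the drift. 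More precisely, the representation should hold at every time $t$ in the form $\rho^\kappa(t,x)=\E[\rho_{in}(Y^{\kappa,t}_t(x))]$, where $Y^{\kappa,t}$ is the backward flow. Because $v$ is divergence-free, the solutions are bounded uniformly: $\|\rho^\kappa\|_{L^\infty}\leq\|\rho_{in}\|_{L^\infty}$. Hence, after passing to a further subsequence, both the even and the odd subsequences converge weak-$\ast$ in $L^\infty_{t,x}$. Linearity of the equation and the fact that $\kappa_q^\beta|\nabla|^\beta\rho\to0$ in distributions show that each limit solves \eqref{eq:transport_PDE}.

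\textbf{Distinct limits.} By Theorem~\ref{th:main_theorem} and Corollary~\ref{cor:vanishing_variance}, for a.e.\ $x\in A_\eps$ the points $X^{\kappa_{2q'}}_1(x)$ and $X^{\kappa_{2q'+1}}_1(x)$ converge a.s.\ to deterministic limits $y(x)\neq y'(x)$. We need this along a single subsequence chosen uniformly in $x$. I expect this to come from Proposition~\ref{prop:preliminary_main_thm}, where the construction should give limit maps $T,T'$ that are measure-preserving and differ by at least $c_\eps$ on $A_\eps$; otherwise a diagonal argument over a countable dense set of times and $x$ will be used. Dominated convergence then gives
\[
\rho^{\kappa_{2q'}}(1,\cdot)\to\rho_{in}\circ T \quad\text{and}\quad \rho^{\kappa_{2q'+1}}(1,\cdot)\to\rho_{in}\circ T' \quad\text{in }L^2 .
\]
I will choose a smooth nonnegative $\rho_{in}$ that separates $T$ from $T'$ on a set of positive measure, for instance a smooth function that varies on scales below $c_\eps$ and is not constant along the relevant pairs of points. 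This makes the time-$1$ traces different.

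\textbf{Passing from time $1$ to the limits in $L^\infty_{t,x}$.} Time-$1$ values are not controlled by weak-$\ast$ convergence in $L^\infty_{t,x}$. I will handle this by showing that $\partial_t\rho^\kappa$ is bounded in $L^\infty_t H^{-N}$. Aubin--Lions then gives convergence in $C_tH^{-N}$, so the limits satisfy $\rho(1)\neq\bar\rho(1)$, and therefore $\rho\neq\bar\rho$.

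\textbf{Conservativity.} Since the convergence at time $1$ is strong in $L^2$, we get $\|\rho(1)\|_{L^2}=\|\rho_{in}\circ T\|_{L^2}=\|\rho_{in}\|_{L^2}$, using that $T$ preserves measure, as a limit of the measure-preserving maps $X^\kappa_1$. The energy balance reads
\[
\tfrac12\|\rho^\kappa(1)\|^2_{L^2}+\kappa^\beta\!\int_0^1\||\nabla|^{\beta/2}\rho^\kappa\|^2_{L^2}\dd s=\tfrac12\|\rho_{in}\|^2_{L^2},
\]
so the strong convergence forces the dissipation term to vanish. The main obstacle is this step: we need strong $L^2$ convergence of $\rho^{\kappa_q}(1)$ on all of $\T^2$, not only on $A_\eps$. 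This requires a.s.\ convergence of $X^{\kappa_q}_1(x)$ for a.e.\ $x\in\T^2$. If the construction does not provide it, I would instead choose $\rho_{in}$ constant off a small region and use the uniform bound on $\rho_{in}$ together with letting $\eps\to0$ along the subsequences.
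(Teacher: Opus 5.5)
\textbf{There is a genuine gap.} Your time-reversal representation and your trace argument are fine and match the paper. The trace argument bounds $\partial_t\rho^\kappa$ uniformly in a negative Sobolev space, which gives compactness in $C_tH^{-N}$ and identifies the time-$1$ values. The gap is the step on which both your distinctness and your dissipation arguments rest: limit maps $T,T'$ with $X^{2q'}_1\to T$ and $X^{2q'+1}_1\to T'$ a.e.\ along one common subsequence. Nothing provides them.

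Proposition~\ref{prop:preliminary_main_thm} only gives $\dist(X^{\kappa_q}_1(x),X^q_1(x))\to 0$. It says nothing about convergence of the deterministic maps $X^q_1$. The subsequences in Theorem~\ref{th:main_theorem} depend on $x$ and come from compactness of $\T^2$. There is also no equicontinuity in $x$: the swap at level $q$ acts at scale $a_q$ and is then unmixed back up to scale $a_0$, so deeper swaps can relocate points at the $a_0$ scale depending on their fine-scale position. A diagonal argument over countably many $x$ therefore only gives information on a null set, and ``$\rho^{\kappa_{2q'}}_1\to\rho_{in}\circ T$ in $L^2$'' is unjustified.

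Even pointwise separation $\dist(X^{2q}_1(x),X^{2q+1}_1(x))\geq c$ would not by itself yield $\rho\neq\bar\rho$. For example, suppose $X^{2q+1}_1$ were a fixed translate of a mixing sequence $X^{2q}_1$. Then $\rho_{in}\circ X^{2q}_1$ and $\rho_{in}\circ X^{2q+1}_1$ would both converge weakly to the constant $\int\rho_{in}$.

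\textbf{Fix: work with the comparison maps $X^q_1$ themselves rather than with limits.}

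For distinctness, use the inclusions \eqref{eq:A_e}--\eqref{eq:A_o}, which hold for every $q$, together with $\dist(F_e,F_o)\geq 2\ell_1$. Take $\rho_{in}\equiv 1$ on $F_e$ and $\rho_{in}\equiv 0$ on $F_o$, and test the time-$1$ values against $\one_{A\cap\mathcal{C}^{\even}_{a_0}}$. On that set, $\rho_{in}(X^{2q}_1(x))=1$ and $\rho_{in}(X^{2q+1}_1(x))=0$ for every $q$. Uniform continuity of $\rho_{in}$, Proposition~\ref{prop:preliminary_main_thm} and dominated convergence then give the limits $|A\cap\mathcal{C}^{\even}_{a_0}|$ and $0$. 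No limit map is needed.

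For the dissipation, what you need is asymptotic determinism, not convergence of $X^{\kappa_q}_1(x)$. Proposition~\ref{prop:preliminary_main_thm} holds for a.e.\ $x\in\T^2$, not only on $A_\eps$, because $\bigcup_j A^{(j)}$ has full measure. Hence $\rho^{\kappa_q}_1-\rho_{in}\circ X^q_1\to 0$ in $L^2$. Since $X^q_1$ preserves measure, $\|\rho_{in}\circ X^q_1\|_{L^2}=\|\rho_{in}\|_{L^2}$. So $\|\rho^{\kappa_q}_1\|_{L^2}\to\|\rho_{in}\|_{L^2}$, and your energy balance finishes the proof. This is the paper's route through \eqref{eq:fluctuation_dissipation} and Corollary~\ref{cor:vanishing_variance}.

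Two further points:
\begin{enumerate}
\item Your fallback of letting $\eps\to 0$ is not admissible, because $u$, $A_\eps$ and $(\kappa_q)$ all depend on $\eps$.
\item Your conservativity claim $\|\rho_1\|_{L^2}=\|\rho_{in}\|_{L^2}$ for the limit does not follow, since weak limits can lose norm. The statement does not require it.
\end{enumerate}
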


\subsection{Main ideas of proof}

The construction of the velocity field is based on a careful refinement of the one originally presented in~\cite{CCS2023}.
In particular, $u$ is first constructed on the time interval $[0,1/2]$, in such a way that it produces smaller scale structures starting from larger ones via a mixing mechanism (one can capture this by looking at the evolution of chessboards, see Figure~\ref{fig:chessboards}), with  almost perfect mixing occurring at $t=1/2$. Loosely speaking, up to the presence of appropriately rescaled mollifications, the velocity field takes the form
\[
u(t,x,y) \simeq \sum_{q=1}^{\infty} v_{q}(t,x,y)\,
\mathbbm{1}_{[t_q,t_{q+1}]}(t),
\qquad t\in (0,1/2),
\]
where \(t_q \uparrow 1/2\) and $v_q(t,\cdot)$ is either a horizontal or vertical shear flow at any fixed time $t$.
Moreover, \(v_q\) is essentially localized at Fourier frequencies with \(k\sim a_{q+1}^{-1}\), where $a_q$ is a super exponentially decreasing sequence; namely we have 
\[
\Pi_{\simeq a_{q+1}^{-1}} v_q \simeq v_q,
\]
where $\Pi_{\simeq \lambda}$ denotes the Fourier projection onto frequencies of order $|k|\sim\lambda$.
By carefully tuning the time, amplitude and frequency parameters, one can obtain in this way a drift $u\in C^\infty([0,1/2)\times\T^2)$, with a singularity at $t=1/2$, such that $u\in C^\infty([0,1/2];C^\alpha(\T^2))$ (cf. Lemma~\ref{lemma:regularity-holder}).
On the interval \((1/2,1)\), one would like to reflect the construction in time in order to produce an unmixing mechanism, so that small scales get transferred to large scales.
However, in order to exploit and amplify the instabilities already present due to mixing, with the goal in mind of showing non-uniqueness of integral curves obtained in the zero noise limit, it turns out convenient to add a small perturbation in this reflection, by adding a ``swap'' velocity field.
As a consequence, not only $u$ admits non-unique trajectories for a large set of initial conditions, but one can find smooth approximations $u_q$ of $u$ which exhibit different behaviours depending on whether $q$ is even or odd (cf. Section~\ref{subsec:properties_lagrangian_flow}). The smooth approximations $u_q$ are obtained by localizing $u$ away from the singular time $t=1/2$, which by construction is roughly equivalent to the application of a Fourier cutoff:
\[
u_q = u (t,x) \cdot \one_{[t_q, 1-t_q]^c} (t) \simeq \Pi_{\leq a_q^{-1}} u.
\]
The aforementioned parity-dependent behaviour is due to the swap velocity field, which induces a highly unstable mechanism for reconstructing the large-scale structures of the unique Lagrangian flows $X_t^q$ associated to $u^q$, after the singular time $t=1/2$.

The next main step is to identify a super exponentially decaying sequence $(\kappa_q)_q$ of noise intensities, such that the stochastic flow \(X_t^{\kappa_q}\) associated with \(u\) gets quantitatively close to the deterministic one \(X_t^{q}\) associated to $u_q$ as $q\to\infty$; see Proposition~\ref{prop:preliminary_main_thm} for a more precise statement.
The rough idea is that the cancellations produced by the presence of noise $W$ (which are responsible for the well-posedness of the SDE) are comparable to a coarse-graining effect on the drift $u$, which allows to replace it by $u_q$, as long as $\kappa \simeq \kappa_q$ is appropriately chosen.
Once these estimates are obtained, the non-convergence of $X_t^{\kappa_q}$ to a unique flow, as stated in Theorem \ref{th:main_theorem}, follows from the aforementioned properties of $X^q$.

To obtain quantitative estimates, one needs to distinguish three regimes, depending on whether $|1/2-t| \gg 1/2-t_q$, $|1/2-t|\simeq t_q$ or $|1/2-t| \ll 1/2-t_q$.

In the first regime, the velocity field $u$ is regular relatively to noise intensities $\kappa\ll a_q$ and one can perform pathwise estimates (see Proposition~\ref{prop:up_to_tq}). Still, the proof is quite delicate: a direct Gr\"onwall argument would produce an exponential factor $\int_0^{t_q} \|\nabla u(s)\|_{L^\infty} \dd s$, which is too large; it is therefore necessary to obtain a substantial improvement over this naive estimate. This is achieved by exploiting the geometry of our construction:  $u$ consists of alternating shear flows, it is locally constant on a set of large Lebesgue measure and its gradient vanishes on most of the domain.

In the second regime, we need to use the fact that $u$ is concentrated on high Fourier modes, combined with the probabilistic cancellations induced by the noise $W$.
To achieve this, we exploit the Stochastic Sewing Lemma (SSL) introduced by L\^e~\cite{le2020sewing}, see Section~\ref{sec:stochastic_estimates}; this step can be rigorously carried out whenever $\kappa \gg a_{q+1} $.
The alternating shear structure of $u$ is again fundamental, as it allows to reduce the relevant estimates to manipulations of one-dimensional functions. It is interesting to note how the SSL, which usually plays a pivotal role to obtain positive regularization by noise results, here turns out to be useful to prove negative non-selection results instead.

The last regime is the simplest one, as one can perform direct estimates using the smallness of $\| u(t)\|_{L^\infty}$ relative to $\kappa_q$ in this range of times.

The super-exponential choice of the sequences of noise intensities $\kappa_q$ and amplitudes $a_q$ is crucial in order to satisfy the conditions on $\kappa$ arising from these regimes and complete the proof.

We conclude this section by discussing the main differences and innovations of this work, compared to previous constructions based on alternating shear flows like e.g.~\cite{DEIJ22,CCS2023,EL23} and partially~\cite{AV23}:
\begin{itemize}
    \item In light of the representation formula~\eqref{eq:formula-fractional-lap} and tightness arguments, any non-selection result for the vanishing viscosity limit of advection-diffusion PDEs necessarily implies lack of selection in law at the level of (backward) stochastic Lagrangian flows with Brownian noise, despite not being explicitly stated; this is the case for~\cite{CCS2023,AV23}. However this consequence is mostly qualitative, providing relatively little information about the behaviour of $X^\kappa$ in the limit. In comparison, Theorem~\ref{th:main_theorem} is rather explicit and allows to deduce several interesting consequences.
    \item We adopt an \emph{entirely} Lagrangian approach, not relying on PDE methods, which makes our construction more flexible. In particular, it can be applied to a large class of stochastic perturbations $W$, not necessarily Markovian; a key step is to replace estimates based on the It\^o--Tanaka trick with stochastic sewing ones. Moreover, differently from~\cite{CCS2023}, the arguments (especially Proposition~\ref{prop:up_to_tq}) are carefully carried so to avoid any continuity and/or exponential integrability assumption on $W$, which allows one to include $\beta$-stable noises of any order $\beta\in (0,2]$.
    \item Finally, the same non-selection statement can be lifted to three-dimensional autonomous velocity fields, see~Theorem \ref{thm:autonomous_case}. Despite the apparent simplicity of the idea (treat time as the third dimension), the argument is rather non-trivial. It requires to exploit the higher time regularity of $u$, obtained by choosing $t_q$ to be a ``slow'', polynomially convergent sequence (following~\cite{EL23}), with improved a priori estimates for the SDE and shifted SSL arguments (inspired by~\cite{Gerencser2023,GalGer2025}).
\end{itemize}

\subsection{Relations with existing literature}\label{subsec:literature}

As mentioned, in dimension $d=1$, the vanishing noise selection problem was investigated by Bafico and Baldi in~\cite{BafBal1982} in the case of Brownian SDEs; typical examples of velocity fields included in their analysis are the ones behaving around the origin like $u(x)={\rm sgn}(x) |x|^\gamma$ with $\gamma\in (0,1)$. The result has been recovered using a more dynamical approach in~\cite{DelFla2014} and local times in~\cite{Trevisan2013}. Singular large deviations have been subsequently established in~\cite{GHR2001}; similar results have been recently obtained in the case of fBm drivers in~\cite{MadGas2024}.
Extensions to higher dimensions are rather incomplete, see~\cite{DelMau2019,PP2018,KulPil2020}.

A non-Lipschitz scenario in which the vanishing noise limit is well-understood is given by the DiPerna--Lions theory of Regular Lagrangian Flows (RLFs), which applies in any dimension\footnote{The interesting case is $d\geq 2$, otherwise condition $\nabla\cdot u\in L^1([0,T];L^\infty)$ reduces to the standard Lipschitz one.} under (for instance) the regularity assumptions $u\in L^1([0,T];W^{1,1})$, $\nabla\cdot u\in L^1([0,T];L^\infty)$.
It was already pointed out in~\cite[Sec. IV.3]{DiPLio1989} that in this case both the unique Regular Lagrangian Flow to the ODE and renormalized solutions to the transport PDE are recovered in the vanishing noise limit. Large deviation results have been established in~\cite{Zhang2013}, stability estimates in~\cite{LL2019} and quantitative rates in~\cite{BoCiCr2022}. All the aforementioned results apply in the Brownian case, but it was pointed out in~\cite{Galeati2025} how pathwise arguments imply stability results and therefore the characterization of the vanishing noise limit for essentially \emph{any} choice of additive noise $W$. Qualitative convergence results extend to the $BV$ setting treated by Ambrosio~\cite{Ambrosio2004}; more generally, as soon as the velocity field $u$ satisfies the renormalization property, the associated RLF is recovered in the vanishing noise limit and both features of anomalous dissipation of energy and spontaneous stochasticity are ruled out, as illustrated in~\cite[Sec. 3]{BBDM2026}.

For divergence-free vector fields 
$u\in L^1_{\mathrm{loc}}((0,T];BV)$, allowing for a singularity at time $t=0$, a characterization of the vanishing noise limit is still possible, both at the level of the transport PDE and in terms of the limits $\tilde\nu_x$ of $Law(X^\kappa(x))$ as probability measures on the path space; see the series of works~\cite{Pitcho2024,MPS2025,Pitcho2026,Pitcho2026b}. It is worth noting that, for Lebesgue a.e. $x$, the measure $\tilde\nu_x$ obtained is a true superposition, cf.~\cite[Thm. 1.5-(ii)]{Pitcho2026}. The class of vector fields $u$ considered in these works includes as a special case De Pauw's example~\cite{Depauw2003}.

Concerning results for PDEs, let us mention: \cite{AttFla2009} for the study of the stochastic transport equation in the Bafico-Baldi regime, showing that the limit $u$ consists of a true superposition solution to the transport PDE, whose expectation $\E[u]$ corresponds to the unique vanishing viscosity limit; \cite{DLN2001}, focusing on the continuation of a mean curvature flow in the case of fattening; \cite{DFV2014}, for the selection of a non-Markovian continuation of two charged particles in the Vlasov--Poisson system after collapse; \cite{GRV2024}, for numerical study of continuation after collapse for three-point vortex dynamics in $2$D Euler.

Small noise perturbations are not the only possible option in order to devise selection criteria of solutions for ODEs; see for instance the work of Drivas and Mailybaev~\cite{DriMai2021}, based on smoothing and the use of dynamical tools (attractors and renormalization). On the other hand, the counterexamples from~\cite{CCS2020,DeLGir2022,CCS2023} show that smoothing the velocity field in general does not provide a selection criterion at either the Lagrangian or Eulerian levels.

As discussed around~\eqref{eq:formula-fractional-lap}, when $W$ is a Brownian noise, understanding the behaviour of the stochastic flow $X^\kappa$ as $\kappa\to 0^+$ is closely tied to the properties of solutions to the advection-diffusion equation~\eqref{eq:intro_PDE} and therefore also vanishing viscosity solutions to the inviscid transport PDE (possibly obtained as weak limits after extraction of a subsequence).
Starting with~\cite{DEIJ22}, several authors have constructed H\"older continuous velocity fields, whose solutions can exhibit pathological behaviours in the vanishing viscosity limit.
In~\cite{CCS2023}, Colombo, Crippa, and the third author of the present manuscript first presented a drift $u$ for which anomalous dissipation of energy and lack of unique selection both occur as $\kappa\to 0^+$.
In a similar setting, Huysmans and Titi construct in~\cite{HuyTit2025} vanishing viscosity solutions whose energy profile
$
t\mapsto \|\rho(t)\|_{L^2}
$
is not necessarily monotone, namely such that $\| \rho (2) \|_{L^2}>\| \rho (1) \|_{L^2}$.
Such phenomena can occur because vanishing viscosity solutions need not converge strongly in $L^2$; in general, only weak-$*$ convergence is available.
Lack of selection and anomalous dissipation in the vanishing viscosity limit are caused by the strong instability of small scales, causing propagation of singularities and forward cascade of energy, to the point where persistence of energy dissipation occurs independently of the diffusivity. 
Without attempting to mention all recent contributions on this topic, we refer to~\cite{CCS2023,JS24,AV23,BSW23,EL23,DEIJ22,CR25}.

\subsection{Structure of the paper}
Section~\ref{sec:regbynoise_overview} recalls known results about the stochastic processes we consider and the associated regularization by noise results.
Section~\ref{sec:stochastic_estimates} presents some stochastic estimates obtained by stochastic sewing techniques, which will be crucial in the proof of our main result.
The construction of the velocity field $u$ is given in Section~\ref{sec:construction}, together with the properties of the flows $X^q$ associated to the smooth approximations $u_q$.
Section~\ref{sec:stability} is devoted to the proof of a pathwise stability estimate between $X^{\kappa_q}$ and $X^q$, on well-chosen time intervals $[0,t_q]$ on which $u$ is smooth relative to the size of the noisy perturbation $\kappa_q$.
We combine all these ingredients in Section~\ref{sec:proof_main_thm}, which presents the proof of Theorem~\ref{th:main_theorem}.
Section~\ref{sec:consequences} discusses some consequences of our construction, specifically: a density result for drifts with the non-selection property (Corollary~\ref{cor:density-non-selection}); the proof of Corollary~\ref{cor:vanishing-viscosity-PDE}; the construction of an autonomous drift $u$ with the same property in dimension $3$ or higher (Theorem~\ref{thm:autonomous_case}).
Appendix~\ref{app:useful} recalls some useful results about smoothing estimates for Markov kernels associated to Lévy processes.

\subsection{Frequently adopted notations} 
We conclude this introduction with the main notations adopted in this manuscript.
We denote by $\T^d \sfrac{\cong [0,1]^d }{\sim}  $  the $d$-dimensional torus. We use the convention $\inf \emptyset = - \infty $. For $k \in \N$, we denote $k\N$ for the set of natural numbers that are multiples of $k$. Moreover, in this paper we will use the following notations.

\begin{itemize}
    \item We work on the time interval $[0,1]$. Any other interval $[S,T]$ appearing in the following is a subset of it.
    \item When deriving estimates, we write $a\lesssim b$ to indicate that there exists a positive constant $C$ such that $a \leq C b$.
    \item We write $D f$ for the differential (in space, possibly in a distributional sense) of $f$.
    \item We consider inhomogeneous Besov spaces $B^\theta_{p,q}(\R^d)$ as defined by Littlewood-Paley blocks $\Delta_j$, see~\cite{BCD2011}; these definitions transfer to the torus $\T^d$ by Poisson summation formula. Whenever clear, we will just write $C^{\theta}_x$ to denote $B^{\theta}_{\infty,\infty}(\T^d;\R^m)$ for $\theta \in \R\setminus \N$, with norm $\| \cdot \|_{C^\theta_x}$. Note that, for $\theta\in (0,+\infty)\setminus \N$, this is consistent with the use of standard H\"older spaces.
    \item For $k\in\N$, we use instead $C^k_x=C^k(\T^d;\R^m)$ to denote the Banach space of continuous functions with continuous derivatives up to order $k$, with norm $\| \cdot\|_{C^k_x}$.
    \item Given an interval $I\subset [0,1]$, $p\in [1,\infty]$ and a Banach space $E$, we denote by $L^p(I;E)$ the associated Bochner-Lebesgue space of functions $f:I\to E$. Whenever $I$ is clear, we may adopt the shortcut notations $L^p_t E$, $\| \cdot\|_{L^p_t E}$. Similar considerations apply for the spaces $C^k(I;E)$, consisting of functions $f$ whose times derivatives $\partial_t^j f$ for $j\leq k$ are continuous maps from $I$ to $E$; whenever convenient we denote its norm by $\| \cdot\|_{C^k_t E}$.
    \item We denote $f\star g$ for convolutions (both on $\R^d$ and $\T^d$).
    \item Given a set $A\subset \T^d$, $|A|$ denotes its Lebesgue measure.
    \item We use $\dist(x,y)$ to denote the distance on $\T^d$.
    \item If $a, b \in \R$, we set $a \wedge b = \min(a, b)$ and $a \vee b = \max(a, b)$.
\end{itemize}

\section{Stochastic processes and regularization by noise results}\label{sec:regbynoise_overview}

In what follows, we fix a reference (usually canonical) probability space $(\Omega, \cF,\P)$, on which a càdlàg process $(W_t)_{t\in [0,1]}$ is defined.\footnote{In this paper we always work on the interval $[0,1]$, although the content of this section is true for any bounded interval $[0,T]$.}
We denote by $(\cF_t)_{t\geq 0}$ generic filtrations on $(\Omega, \cF,\P)$, which are always assumed to satisfy the usual assumptions. $(\cF^W_t)_{t\geq 0}$ stands for the (usual augmentation of the) natural filtration generated by $W$.

Given an integrable random variable $Z$ and a filtration $(\cF_t)_{t\geq 0}$, we will adopt $\E_s Z$ as a short-hand notation for the conditional expectation $\E[Z | \mathcal{F}_s]$. Whenever the filtration is not specified, it means we are considering $(\cF^W_t)_{t\geq 0}$.
We will focus on the two main cases for $W$: fractional Brownian motion and stable L\'evy processes.

\subsection{Fractional Brownian motion}\label{subsec:fbm}

We say that $(W^H_t)_{t\in [0,T]}$ is a fractional Brownian motion (fBm) with Hurst parameter $H\in (0,1)$ if it is a continuous centered Gaussian process with covariance
\begin{equation*}
    \E[W^H_t W^H_s]=\frac{1}{2}(t^{2H}+s^{2H}-|t-s|^{2H}).
\end{equation*}
We can extend the definition recursively to any $H\in(0,+\infty)\setminus\N$ by $W^{H+1}_t=\int_0^t W^H_r \dd r$ and to $d$-dimensional processes by requiring each coordinate to be an i.i.d. fBm.

For $H=1/2$, fBm coincides with standard Brownian motion. However, for $H\neq 1/2$, it is not a semimartingale nor a Markov process; we refer to~\cite{GalGer2025} and the references therein for a deeper discussion.
In particular, $W^H$ does not have independent increments and there is no natural Markov transition kernel underlying it. It does however possess a key property of \emph{local nondeterminism} (LND), which we will employ later; to explain it, we need a few notations.

We denote by $\mathcal{P}_t=e^{-t \Delta}$ the standard heat kernel on $\R^d$; equivalently, one has $(\mathcal{P}_t f)(x)=(p_t\star f)(x)$, where $p_t=(4\pi t)^{-d/2} \exp(-\sfrac{|x|^2}{4t})$, and $\widehat{\mathcal{P}_t f}(\xi)=e^{-t|\xi|^2} \widehat f(\xi)$.
The LND of fBm can then be stated as follows: there exists a constant $c_H>0$ such that, for any $s\leq t$, any bounded measurable function $f:\R^d\to \R$ and any other $\cF^W_s$-measurable random variable $Z$, it holds that
\begin{equation}\label{eq:LND_fBm}
    \E[f(Z +W^H_t)|\cF^W_s]= (\mathcal{P}_{c_H |t-s|^{2H}} f)(Z + \E[ W^H_t|\cF^W_s]);
\end{equation}
see for instance~\cite[eq. (1.25)]{GalGer2025}.

Following~\cite[Def. 1]{NuaOuk2002}, given a filtration $(\cF_t)_{t\geq 0}$ (potentially larger than $(\cF^W_t)_{t\geq 0}$), one can introduce the concept of $(\cF_t)_{t}$-fBm, in which case relation~\eqref{eq:LND_fBm} is still valid with $\cF^W$ replaced by $\cF$; to avoid too many technicalities, we refrain from giving the full definition of $(\cF_t)_{t}$-fBm here.

The strong regularization by noise properties of fBm on ODEs are summarized in the following statement from~\cite{GalGer2025}; see also~\cite{CatGub2016,le2020sewing,Gerencser2023} for relevant precursors. 
In the following theorem, a stochastic flow of diffeomorphisms is understood in the following sense: there exists a set $\Gamma \subset \Omega$ of full probability such that, for every $\omega \in \Gamma$, the following hold:
\begin{itemize}
\item the map $(s,t,x) \mapsto X^\kappa_{s,t}(x,\omega)$ 
is jointly continuous in $(t,x)$ and differentiable in $x$, with $(s,t,x) \mapsto D X^\kappa_{s,t}(x,\omega)$
also jointly continuous in $(t,x)$;
\item $X^\kappa_{s,s}(x,\omega)=x$, and for every $s<u<t$, the semigroup property holds:
\[
X^\kappa_{u,t}(\cdot,\omega)\circ X^\kappa_{s,u}(\cdot,\omega)=X^\kappa_{s,t}(\cdot,\omega)\,;
\]
\item for every $s<t$, the map $x \mapsto X^\kappa_{s,t}(x,\omega)$ is a diffeomorphism of $\T^d$.
\end{itemize}

\begin{theorem}\label{thm:SWP_fBm}
    Let $W^H$ be an fBm of parameter $H\in (0,+\infty)\setminus\N$, $\kappa>0$ and let $u\in L^\infty([0,1];C^\alpha(\T^d))$ with\footnote{Condition $\alpha\geq 0$ in~\eqref{eq:condition_fBm_SWP} is not even needed in~\cite{GalGer2025}, which is set on $\R^d$; we only enforce it in order to avoid technicalities concerning the interpretation one must give to the integral in~\eqref{eq:SDE_fBm:SWP} and the identification of elements of $C^{-\theta}(\T^d)$ with suitable elements of $C^{-\theta}(\R^d)$.}
    \begin{equation}\label{eq:condition_fBm_SWP}
        \alpha\geq 0, \quad \alpha>1-\frac{1}{2H}.
    \end{equation}
    Then, for any $x\in\T^d$, there exists a strong, pathwise unique solution to the SDE
    \begin{equation}\label{eq:SDE_fBm:SWP}
        X^\kappa_t = x + \int_0^t u(r,X^\kappa_r) \dd r + \kappa W^H_t.
    \end{equation}
    Moreover, the SDE admits a stochastic flow of diffeomorphisms $\{X^\kappa_{s,t}(x,\omega)\}$ on $\T^d$.
\end{theorem}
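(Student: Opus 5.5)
The plan is to import the result from \cite{GalGer2025}, which is set on $\R^d$, and transfer it to the torus by periodic lifting. Let $\bar u(t,\cdot)$ be the $\Z^d$-periodic extension of $u(t,\cdot)$ to $\R^d$. Then $\bar u\in L^\infty([0,1];C^\alpha_b(\R^d))$, with the same norm. Under~\eqref{eq:condition_fBm_SWP}, the results of \cite{GalGer2025} give three things: strong existence and pathwise uniqueness for the $\R^d$-valued SDE with drift $\bar u$, for every initial condition $\bar x\in\R^d$; the existence of a stochastic flow of $C^1$-diffeomorphisms $\bar X^\kappa_{s,t}$; and joint continuity of $(s,t,x)\mapsto(\bar X^\kappa_{s,t},D\bar X^\kappa_{s,t})$. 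Since $\alpha\geq 0$ and $u$ is a bounded measurable function, the integral in~\eqref{eq:SDE_fBm:SWP} is a classical Lebesgue integral, so no distributional interpretation is needed. For $H\geq 1$, $W^H$ is of finite variation, but the theorem in \cite{GalGer2025} covers all $H\notin\N$ through the LND property~\eqref{eq:LND_fBm}, so I would invoke it as stated.

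The next step is to check that the construction commutes with $\Z^d$-translations. For $k\in\Z^d$, the process $\bar X^\kappa_t(\bar x)+k$ solves the SDE started at $\bar x+k$, because $\bar u$ is periodic. Pathwise uniqueness then gives $\bar X^\kappa_{s,t}(\bar x+k)=\bar X^\kappa_{s,t}(\bar x)+k$ almost surely. To obtain this simultaneously for all $\bar x$ and $k$ on one full-measure set, I would first restrict to rational points and countably many $k$, and then extend by the continuity of the flow. Once this holds, the flow descends to a map $X^\kappa_{s,t}$ on $\T^d=\R^d/\Z^d$, which satisfies~\eqref{eq:SDE_fBm:SWP} when read modulo $\Z^d$.

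It remains to check the flow properties on the torus. Continuity, differentiability and the semigroup property pass directly to the quotient. The descended map is a diffeomorphism of $\T^d$ because its lift is an equivariant diffeomorphism of $\R^d$, and the inverse of an equivariant map is again equivariant. Pathwise uniqueness on $\T^d$ follows by lifting: any $\T^d$-valued solution lifts continuously, given the continuity of $W^H$, to an $\R^d$-valued solution of the lifted equation, and uniqueness holds there.

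The main obstacle I expect is the bookkeeping needed to fix a single null set $\Gamma$ on which all of these properties hold simultaneously for all $(s,t,x)$. I would handle this with the usual Kolmogorov-type continuity estimates from \cite{GalGer2025}, applied to the flow and its derivative, together with a countable dense set of parameters.
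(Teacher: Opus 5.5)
Your proposal is correct and takes the same route as the paper. The paper deduces the result from \cite{GalGer2025} by regarding $u$ and the flow as periodic objects on $\R^d$, using pathwise uniqueness to obtain $\Z^d$-equivariance, and combining countable quantifiers with continuity to fix a common full-measure set; these steps are spelled out in the proof of Theorem~\ref{thm:SWP_stable}. The one step the paper mentions that you skip is the harmless reduction to unit noise intensity $\kappa=1$. The paper does this by self-similarity of fBm; equivalently, the spatial rescaling $Y=X/\kappa$, with drift $\kappa^{-1}\bar u(t,\kappa\,\cdot)\in L^\infty_t C^\alpha_b$, does it.
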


Theorem~\ref{thm:SWP_fBm} is a consequence of~\cite{GalGer2025}, up to allowing an intensity $\kappa>0$ in front of the noise (which can be easily reabsorbed by a time rescaling argument, using self-similarity of fBm) and by regarding the velocity field $u$ and the resulting flow $X^\kappa$ as periodic functions on $\R^d$ (see also the proof of Theorem~\ref{thm:SWP_stable} below for more details).

\subsection{Stable Lévy processes}\label{subsec:stable_levy}

Given $\beta\in (0,2)$, the isotropic $\beta$-stable Lévy process $W^\beta$ on $\R^d$ is the Lévy process with triplet $(0,0,\nu^\beta)$ given by $\nu^\beta(\dd x) = c_{d,\beta} |x|^{-d-\beta} \dd x$; it is a Markov process with independent increments and generator given by the fractional Laplacian $\mathcal{L}_\beta=-|\nabla|^\beta=-(-\Delta)^{\beta/2}$, which can be equivalently defined in either real or Fourier variables as 
\begin{align*}
    (|\nabla|^\beta f)(x)= c_{d,\beta}\, {\rm p.v.} \int_{\R^d} \frac{f(x)-f(y)}{|x-y|^{d+\beta}} \dd y, \qquad \widehat{|\nabla|^\beta f}(\xi)=|\xi|^\beta \widehat{f}(\xi) 
\end{align*}
whenever $f$ is regular enough.
$W^\beta$ is a pure jump process with $W^\beta_0=0$; in particular, with probability $1$ its paths $t\mapsto W^\beta_t$ are càdlàg, locally constant and discontinuous on every interval. By~\cite[Thm. 25.3, Ex. 25.10 and Thm. 25.18]{Sato2013}, it satisfies
\begin{equation}\label{eq:moments_stable}
    \E\left[\sup_{t\in [0,1]} |W^\beta_t|^\eta\right]<\infty \quad\forall\, \eta\in (0,\beta), \qquad 
    \E[|W^\beta_t|^\beta]=+\infty \quad\forall\, t>0.
\end{equation}
We refer to the monographs~\cite{Applebaum2009,Sato2013} for more details on Lévy processes.
By standard multiplier theorems (cf.~\cite[Lem. 2.2]{BCD2011}), $|\nabla|^\beta$ is a bounded operator from $B^\theta_{p,q}$ to $B^{\theta-\beta}_{p,q}$ for any $\theta,p,q$.
We will denote by $\{\mathcal{P}^\beta_t\}_{t\geq 0}$ the Markov transition semigroup associated to $W^\beta$, given by $\mathcal{P}^\beta_t=e^{-t|\nabla|^\beta}$, namely
\begin{align*}
    \widehat{\mathcal{P}^\beta_t f}(\xi)=e^{-|\xi|^\beta} \widehat f (\xi).
\end{align*}
We refer to Appendix~\ref{app:useful} for the smoothing properties of the semigroup $\mathcal{P}_t^\beta$.

Given a filtration $(\cF_t)_t$, we say that $W^\beta$ is a $(\cF_t)_t$-$\beta$-stable process if $W^\beta$ is $(\cF_t)_t$-adapted and $W^\beta_t-W^\beta_s$ is independent of $\cF_s$ for any $s\leq t$. Under this assumption, in analogy with~\eqref{eq:LND_fBm}, for any $s\leq t$, any bounded measurable function $f:\R^d\to \R$ and any other $\cF_s$-measurable random variable $Z$, we have the (slightly simpler) formula
\begin{equation}\label{eq:LND_stable}
    \E[f(Z + W^\beta_t)|\cF_s]= (\mathcal{P}^\beta_{t-s} f)(Z + W^\beta_s).
\end{equation}

All the above definitions are given in full space, but they are consistent with the corresponding ones on $\T^d$ defined via discrete Fourier transform, e.g. $\widehat{|\nabla|_{\T^d}^\beta f}(k)=|k|^\beta \widehat f(k)$ for $k\in(2\pi\Z)^d$, as can be readily checked using Poisson's summation formula. For instance, given $f\in C^\gamma(\T^d)$ with $\gamma>0$, we can regard it as a $2\pi$-periodic element of $C^\gamma(\R^d)\hookrightarrow B^\gamma_{\infty,\infty}$; for $\gamma>\beta$, it follows that $|\nabla|^\beta f\in B^{\gamma-\beta}_{\infty,\infty}\hookrightarrow C^0(\R^d)$, so $|\nabla|^\beta f$ is classically pointwise defined and in fact still $2\pi$-periodic, thus can be identified with $|\nabla|_{\T^d}^\beta f\in C(\T^d)$.
For this reason, whenever it doesn't cause confusion, henceforth for simplicity we will not distinguish between $|\nabla|^\beta$ and $|\nabla|_{\T^d}^\beta$.

Let us denote by $\mathcal{M}(\T^d)$ the set of signed measures on $\T^d$. We write $\mu\in C_w([0,1];\mathcal{M}(\T^d))$ for the set of maps $t\mapsto \mu_t$ which are weakly continuous in the sense of measures; note that, by the  Banach-Steinhaus uniform boundedness principle, any $\mu\in C_w([0,1];\mathcal{M}(\T^d))$ satisfies $\sup_{t\in [0,1]} \| \mu_t\|_{TV}<\infty$, where $\| \cdot\|_{TV}$ stands for the total variation norm on $\mathcal{M}(\T^d)$.
Given $\nu\in\mathcal{M}(\T^d)$ and $f\in C(\T^d; \T^d)$, we denote by $f_\sharp \nu\in \mathcal{M}(\T^d)$ the pushforward of $\nu$ under $f$. 

The notion of stochastic flow adopted in Theorem ~\ref{thm:SWP_stable} is slightly more technical compared to Theorem~\ref{thm:SWP_fBm} due to potential jump discontinuities of $W^\beta$ and uncountable quantifiers.
    In particular, by~\cite[Thm. 5.1]{Priola2018}, one can find $\Gamma\subset \Omega$ of full probability such that, for any $\omega\in\Gamma$, we have that:
   \begin{itemize}
    \item for every $\omega \in \Gamma$, the map
    \[
    (s,t,x)\mapsto X^\kappa_{s,t}(x,\omega)
    \]
    is càdlàg in $(s,t)$ and continuous in $x$;

    \item $X^\kappa_{s,s}(x,\omega)=x$, and for every $s<u<t$, the semigroup property holds:
    \[
    X^\kappa_{u,t}(\cdot,\omega)\circ X^\kappa_{s,u}(\cdot,\omega)=X^\kappa_{s,t}(\cdot,\omega);
    \]

    \item  for every fixed $s$, by the results of~\cite{CSZ2018}, there exists a set $\Gamma_s$ of full probability such that, for every $\omega\in\Gamma_s$, the map $x\mapsto X^\kappa_{s,t}(x,\omega)$ is a diffeomorphism of $\T^d$ for every $t\ge s$;

    \item for the same $\omega\in \Gamma_s$, the map
    \[
    (t,x)\mapsto D X^\kappa_{s,t}(x,\omega)
    \]
    is jointly continuous.
\end{itemize}

Similarly to Theorem~\ref{thm:SWP_fBm}, we can summarize the regularizing features of $\beta$-stable paths, and in this case their connection to Fokker-Planck type PDEs, as follows.

\begin{theorem}\label{thm:SWP_stable}
    Let $W^\beta$ be an isotropic $\beta$-stable process with $\beta\in (0,2)$, $\kappa>0$ and let $u\in L^\infty([0,1];C^\alpha(\T^d))$ with\footnote{Compared to the majority of the literature on the topic, the notations for $\alpha$ and $\beta$ in~\eqref{eq:condition_stable_SWP} are reversed.}
    \begin{equation}\label{eq:condition_stable_SWP}
        \alpha>1-\frac{\beta}{2}.
    \end{equation}
    Then, for any $x\in\T^d$, there exists a strong, pathwise unique solution to the SDE
    \begin{equation}\label{eq:stable_SDE}
        X^\kappa_t = x + \int_0^t u(r,X^\kappa_r) \dd r + \kappa W^\beta_t
    \end{equation}
    and the SDE admits a stochastic flow of diffeomorphisms $\{X^\kappa_{s,t}(x,\omega)\}$.
    Furthermore, for any $\mu_0\in\mathcal{M}(\T^d)$, the map
    \begin{equation}\label{eq:FP_representation_formula1}
        \mu^\kappa_t:=\E\big[(X^\kappa_{0,t})_\sharp \mu_0\big] \quad\forall\, t\in [0,1]
    \end{equation}
    is the unique solution in $C_w([0,1];\mathcal{M}(\T^d))$ (in the weak sense) to the PDE
    \begin{equation}\label{eq:FP_PDE_stable}
        \partial_t \mu^\kappa + \nabla\cdot (u\, \mu^\kappa) = - \kappa^\beta |\nabla|^\beta \mu^\kappa,\quad\mu^\kappa\vert_{t=0}=\mu_0.
    \end{equation}
\end{theorem}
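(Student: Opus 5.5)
The plan is to reduce Theorem~\ref{thm:SWP_stable} to known results on $\R^d$ by periodic lifting, and then deduce the Fokker--Planck representation from It\^o's formula for jump processes together with a duality-based uniqueness argument.

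\textbf{Step 1: strong well-posedness and the flow.} Regard $u$ as a $1$-periodic field $\tilde u\in L^\infty([0,1];C^\alpha(\R^d))$. Since $\kappa W^\beta_t$ has the same law as $W^\beta_{\kappa^\beta t}$, the intensity $\kappa$ can be absorbed by a deterministic time change. The rescaled drift $\kappa^{-\beta}\tilde u(\kappa^{-\beta}\cdot,\cdot)$ still lies in $L^\infty_t C^\alpha_x$, and $\alpha$ is unchanged. Under $\alpha>1-\beta/2$ we then invoke the strong existence and pathwise uniqueness results for stable-driven SDEs with H\"older drift (Priola's results, and for $\beta\leq 1$ the results of Chen--Song--Zhang~\cite{CSZ2018}). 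Together with \cite[Thm.~5.1]{Priola2018} and \cite{CSZ2018}, these also give the stochastic flow of diffeomorphisms in the sense described above.

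Periodicity of $\tilde u$ combined with pathwise uniqueness gives the equivariance
\[
X_{s,t}(x+k)=X_{s,t}(x)+k, \qquad k\in\Z^d,
\]
so the flow descends to $\T^d$.

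\textbf{Step 2: $\mu^\kappa$ is a weak solution.} For $\varphi\in C^\infty(\T^d)$, apply It\^o's formula for semimartingales with jumps to $\varphi(X^\kappa_t)$. The generator of $\kappa W^\beta$ is $-\kappa^\beta|\nabla|^\beta$, so
\[
\varphi(X^\kappa_t)=\varphi(x)+\int_0^t \bigl(u\cdot\nabla\varphi-\kappa^\beta|\nabla|^\beta\varphi\bigr)(r,X^\kappa_r)\dd r+M_t,
\]
where $M$ is a compensated Poisson integral. This $M$ is a true martingale with zero mean because $\varphi$ is smooth and bounded: the small jumps are controlled by $|D^2\varphi|\,|z|^2$ and the large jumps by $2\|\varphi\|_\infty$, both integrable against $\nu^\beta$.

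Take expectations, integrate against $\mu_0$, and use Fubini. This is justified because $\|(X_{0,t})_\sharp\mu_0\|_{TV}\leq\|\mu_0\|_{TV}$, with measurability coming from joint measurability of the flow. The result is the weak formulation of \eqref{eq:FP_PDE_stable}. Weak continuity in $t$ follows from the c\`adl\`ag property of the paths combined with stochastic continuity of $W^\beta$ (no fixed jump times) and dominated convergence.

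\textbf{Step 3: uniqueness in $C_w([0,1];\mathcal{M}(\T^d))$.} This is the main obstacle, since $u$ is only H\"older continuous and the measures are rough. The approach is a duality argument. Given $\psi\in C^\infty$ and $T\in(0,1]$, solve the backward Kolmogorov equation
\[
\partial_t f + u\cdot\nabla f - \kappa^\beta|\nabla|^\beta f=0, \qquad f(T)=\psi.
\]
The Schauder theory for nonlocal operators with drift in the subcritical-type range $\alpha>1-\beta/2$ (the same estimates underlying Zvonkin's transform in the cited works) gives $f\in L^\infty_t C^{\beta+\alpha-\varepsilon}_x$ with $\beta+\alpha>1$, so $\nabla f$ is bounded and continuous.

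If $\mu$ is the difference of two solutions, test it against $f$. We cannot directly insert $f$ into the weak formulation, because $f$ is not $C^1$ in time and $|\nabla|^\beta f$ is not smooth. To handle this, mollify $u$ into $u_n$ and let $f_n$ be the corresponding smooth solutions. The Schauder estimates are uniform in $n$, so $f_n\to f$ in $C^1_x$. Testing the equation for $\mu$ against $f_n$ gives
\[
\langle\mu_T,\psi\rangle=\int_0^T\langle\mu_t,(u-u_n)\cdot\nabla f_n\rangle\dd t.
\]
The right-hand side tends to $0$ because $\sup_t\|\mu_t\|_{TV}<\infty$ and $\|u-u_n\|_\infty\to0$. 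Hence $\mu\equiv0$.

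The delicate points are checking that the Schauder bounds hold in the full range $\alpha>1-\beta/2$, including $\beta<1$ where $\alpha>1/2$ is forced, and that they are uniform in the mollification. I would import these from the regularization-by-noise literature via the smoothing estimates of Appendix~\ref{app:useful}.
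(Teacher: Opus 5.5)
Your proposal is correct and follows essentially the same route as the paper: you lift to a periodic problem and rescale by self-similarity to import strong well-posedness and the flow from Priola and Chen--Song--Zhang, you use It\^o's formula for existence, and you prove uniqueness by duality with the backward nonlocal Kolmogorov equation, relying on Schauder estimates in the range $\alpha+\beta>1$. The differences are cosmetic: you pose the dual problem with terminal datum $\psi$ instead of a source $g$ and zero terminal datum, and you mollify $u$, which makes explicit the paper's ``possibly after an approximation argument''. For that step only the uniform bound $\sup_n\|\nabla f_n\|_{L^\infty_{t,x}}<\infty$ is needed, not the convergence $f_n\to f$.
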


\begin{proof}
    Up to rescaling the time interval and using the self-similarity properties of $W^\beta$, we may restrict ourselves to $\kappa=1$; we will just write $X$ in place of $X^1$.
    
    We may identify $u$ with a $2\pi$-periodic element of $L^\infty([0,1];C^\alpha(\R^d))$; then strong existence, pathwise uniqueness and existence of a stochastic flow of diffeomorphisms on $\R^d$ under condition~\eqref{eq:condition_stable_SWP} follow from the works~\cite{Priola2012,CSZ2018,Priola2018}.
    From pathwise uniqueness and periodicity of $u$, for any fixed $(s,x)$ and any $k\in\Z^d$, one deduces that $\P$-a.s. $X_{s,t}(x+2\pi k,\omega)=X_{s,t}(x,\omega)$ for all $t\in [s,T]$.
    Using the $\P$-a.s. continuity properties of $(s,t,\omega)\mapsto X_{s,t}(x,\omega)$  and countable quantifiers, one can then find a common set $\tilde\Omega\subset \Omega$ of full probability on which $X_{s,t}(x+2\pi k,\omega)=X_{s,t}(x,\omega)$ for all $s,t,x,k$; therefore $X$ can be regarded as a stochastic flow on $\T^d$.
    
    Set $X^x_t(\omega):=X_{0,t}(x,\omega)$. By the It\^o formula for Lévy processes (cf.~\cite{Applebaum2009}), for any $\varphi\in C^2(\R^d)$ we have
    \begin{equation}\label{eq:ito_stable}
        \E[\varphi(X^x_t)] = \varphi(x) + \int_0^t \E[(u_r\cdot\nabla\varphi)(X^x_r)] \dd r + \int_0^t \E[(|\nabla|^\beta\varphi)(X^x_r)] \dd r \quad\forall\, t\geq 0 
    \end{equation}
    which shows that $\mu_t:=Law(X^x_t)=\E[\delta_{X^x_t}]=\E[(X_{0,t})_\sharp \delta_x]$ is a weak solution to~\eqref{eq:FP_PDE_stable} on $\R^d$ with $\mu_0=\delta_x$. By testing against $(2\pi)$-periodic functions $\varphi$, in view of the aforementioned identifications between $|\nabla|^\beta$ and $|\nabla|_{\T^d}^\beta$, it is also a weak solution on $\T^d$. Finally, it follows from~\eqref{eq:ito_stable} and standard density arguments that $\mu\in C_w([0,1];\mathcal{M}(\T^d))$.
    The same claim for general $\mu_0\in \mathcal{M}(\T^d)$ then follows from the linearity of the map $\mu_0\mapsto (X_{0,t})_\sharp \mu_0$ and disintegration arguments.

    It remains to establish uniqueness of weak solutions to~\eqref{eq:FP_PDE_stable} in the class $C_w([0,1];\mathcal{M}(\T^d))$.
    The argument is classical (in fact valid under the weaker assumption $\alpha>1-\beta$), but the statement is hard to find explicitly in the literature, so we shortly sketch it.
    By linearity, it suffices to show that $\mu\equiv 0$ if $\mu_0\equiv 0$.
    Regarding $u$ as a $2\pi$-periodic element of $L^\infty([0,1];C^\alpha(\R^d))$, by~\cite[Thm. 2.3 \& Cor. 2.10]{CSZ2018} (applied with $\lambda=0$, $\delta=\bar\alpha=1$), for any $g\in L^\infty([0,1];C^\alpha(\R^d))$ there exists a unique classical solution $f$ to the dual backward equation
    \begin{equation*}
        \partial_t f + u\cdot\nabla f = |\nabla|^\beta f + g, \quad f\vert_{t=T}=0
    \end{equation*}
    which belongs to $L^\infty([0,1];C^\gamma(\R^d))$ for any $\gamma<(\alpha+\beta)\wedge 2$ (for $\beta\in (0,1)$, see also the sharper Schauder estimates obtained in~\cite{CMP2020}). In particular, when $g$ is $2\pi$-periodic, then by uniqueness so is $f$, which can be regarded as a function on $\T^d$.
    Thanks to the regularity of $f$, the distributions $\partial_t f$, $u\cdot\nabla f$ and $|\nabla|^\beta$ all belong to $L^\infty([0,1];C^0(\T^d))$ and so (possibly after an approximation argument) we can test $\mu$ against $f$ to find
    \begin{align*}
        0 = \langle \mu_T, f_T\rangle-\langle\mu_0,f_0\rangle=\int_0^T \langle \mu_r, g_r\rangle \dd r.
    \end{align*}
    As the argument works for any $g\in L^\infty([0,1];C^\beta(\T^d))$, by the fundamental lemma of the calculus of variations and the weak continuity of $t\mapsto \mu_t$ we conclude that $\mu_t=0$ for all $t\in [0,1]$.
\end{proof}

\begin{remark}\label{rem:FP_div_free}
    When $\nabla\cdot u=0$, by suitable approximation arguments, one can show that the stochastic flow $X^\kappa_{s,t}(x,\omega)$ from Theorems~\ref{thm:SWP_fBm}-\ref{thm:SWP_stable} is Lebesgue measure preserving, in the sense that there exists $\Gamma\subset\Omega$ of full probability such that
    \begin{equation*}
        \int_{\T^d} f(X^\kappa_{s,t}(x,\omega)) \dd x = \int_{\T^d} f(x) \dd x\quad \forall\, f\in L^1(\T^d),\quad \forall \, \omega\in\Gamma,\quad\forall\, 0\leq s\leq t\leq T.
    \end{equation*}
    As a consequence, in the setting of Theorem~\ref{thm:SWP_stable}, if $\mu_0({\rm d} x)=\rho_0(x) \dd x$ for some $\rho_0\in L^p(\T^d)$, $p\in [1,\infty]$, then the unique solution $\mu^\kappa$ to~\eqref{eq:FP_PDE_stable} (which can now be regarded as an advection-diffusion equation with fractional viscosity) is of the form $\mu^\kappa_t({\rm d} x)=\rho^\kappa_t(x)\dd x$, where $\rho^\kappa\in C([0,1];L^p(\T^d))$ with $\| \rho^\kappa_t\|_{L^p(\T^d)}\leq \| \rho_0\|_{L^p(\T^d)}$.
    In particular, $\rho^\kappa$ is now a weak, $L^p$-valued solution to the PDE
    \begin{equation}\label{eq:FP_PDE_stable_v2}
        \partial_t \rho^\kappa + u\cdot\nabla \rho^\kappa = - \kappa^\beta |\nabla|^\beta \rho^\kappa,\quad\rho^\kappa\vert_{t=0}=\rho_0.
    \end{equation}
    Furthermore, since $X^\kappa$ is Lebesgue measure preserving and invertible, after a change of variables, formula~\eqref{eq:FP_representation_formula1} yields the pointwise representation formula à la Feynman--Kac
    \begin{equation}\label{eq:FP_representation_formula2}
        \rho^\kappa_t(x)=\E\big[\rho_0\big((X^\kappa_{0,t})^{-1}(x)\big)\big].
    \end{equation}
\end{remark}

\begin{remark}
    Theorem~\ref{thm:SWP_stable} and Remark~\ref{rem:FP_div_free} were stated for $\beta\in (0,2)$, in which case $W^\beta$ is a pure jump process, but the same results apply verbatim for $\beta=2$, in which case $W^\beta$ is just standard Brownian motion.
    Let us also mention that in this work we restrict ourselves to isotropic $\beta$-stable processes only for the sake of simplicity. Results in the style of Theorem~\ref{thm:SWP_stable} are valid for a much larger class of stable-like processes, like the ones satisfying the abstract assumptions from the works~\cite{CSZ2018,BDG2025}.
\end{remark}

In light of formula~\eqref{eq:FP_representation_formula2}, one can derive a \emph{Fluctuation-Dissipation Relation} (FDR) for the PDE~\eqref{eq:FP_PDE_stable_v2}, describing the energy dissipated up to time $t$ in terms of quantities associated to the underlying (backward) stochastic flow. This is a natural generalization to $\beta\in (0,2)$ of~\cite[eq. (2.12)]{DriEyi2017}, where the Brownian case $\beta=2$ was treated.\footnote{Therein $\kappa^\beta$ is replaced by $\kappa$, simply because of the different definition of the coefficient in front of $W^\beta$ in the SDE~\eqref{eq:stable_SDE}.}

\begin{corollary}[Fluctuation-Dissipation Relation]\label{cor:FDR}
    Let $W^\beta$, $u$ be as in Theorem~\ref{thm:SWP_stable} and additionally assume that $u$ is divergence-free. Then for any $\rho_0\in L^2$ and any $\kappa>0$, the associated solution $\rho^\kappa$ to the PDE~\eqref{eq:FP_PDE_stable_v2} satisfies
    \begin{equation}\label{eq:fluctuation_dissipation}
        \| \rho_0\|_{L^2}^2-\|\rho^\kappa_t\|_{L^2}^2 = 2\kappa ^\beta\int_0^t \big\| |\nabla|^\beta \rho^\kappa_s\big\|_{L^2}^2 \dd s = \int_{\T^d} Var\big[ \rho_0((X^\kappa_{0,t})^{-1}(x))\big] \dd x
    \end{equation}
\end{corollary}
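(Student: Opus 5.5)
We read the middle term of \eqref{eq:fluctuation_dissipation} as $2\kappa^\beta\int_0^t \||\nabla|^{\beta/2}\rho^\kappa_s\|_{L^2}^2\dd s$, which is what the energy identity produces and matches Corollary~\ref{cor:vanishing-viscosity-PDE}. The two equalities will be proved separately: the second (variance) one is purely Lagrangian and comes from Remark~\ref{rem:FP_div_free}, while the first (energy) one is Eulerian and needs a justification argument.

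\emph{Variance identity.} Fix $t$ and write $Y_t(x,\omega):=(X^\kappa_{0,t})^{-1}(x,\omega)$. For $\omega\in\Gamma$, the map $X^\kappa_{0,t}(\cdot,\omega)$ is a Lebesgue measure preserving diffeomorphism, hence so is its inverse. Therefore $\int_{\T^d}\rho_0(Y_t(x,\omega))^2\dd x=\|\rho_0\|_{L^2}^2$ for every $\omega\in\Gamma$. Joint measurability of $(x,\omega)\mapsto Y_t(x,\omega)$ follows from the continuity of the flow in $x$, so Fubini applies and gives
\begin{equation*}
\int_{\T^d}\E\big[\rho_0(Y_t(x))^2\big]\dd x=\|\rho_0\|_{L^2}^2 .
\end{equation*}
By \eqref{eq:FP_representation_formula2} we have $\E[\rho_0(Y_t(x))]=\rho^\kappa_t(x)$, so $\int_{\T^d}(\E[\rho_0(Y_t(x))])^2\dd x=\|\rho^\kappa_t\|_{L^2}^2$. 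Subtracting the two identities yields $\int_{\T^d} Var[\rho_0(Y_t(x))]\dd x=\|\rho_0\|_{L^2}^2-\|\rho^\kappa_t\|_{L^2}^2$.

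\emph{Energy identity, step 1: a priori bound.} Mollify $u_n:=u\star\varphi_{1/n}$, which is still divergence-free and bounded in $L^\infty_tC^\alpha_x$, and take smooth data $\rho_{0,n}\to\rho_0$ in $L^2$. The corresponding solutions $\rho_n$ are smooth. Multiplying the equation by $\rho_n$ and using $\int (u_n\cdot\nabla\rho_n)\rho_n\dd x=0$ (because $\nabla\cdot u_n=0$) together with $\langle \rho,|\nabla|^\beta\rho\rangle=\||\nabla|^{\beta/2}\rho\|_{L^2}^2$, we obtain the exact identity for $\rho_n$. This gives uniform bounds in $L^\infty_tL^2\cap L^2_tH^{\beta/2}$. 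We extract a weak-$\ast$ limit; it solves \eqref{eq:FP_PDE_stable_v2} weakly and lies in $C_w([0,1];\mathcal M)$. By the uniqueness part of Theorem~\ref{thm:SWP_stable} it coincides with $\rho^\kappa$. By lower semicontinuity we then get $\rho^\kappa\in L^2_tH^{\beta/2}$ and the energy \emph{inequality}.

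\emph{Energy identity, step 2: upgrade to equality.} Mollify the equation for $\rho^\kappa$ in space, $\rho_\eps:=\rho^\kappa\star\varphi_\eps$, test against $\rho_\eps$, and control the commutator
\begin{equation*}
R_\eps:=(u\cdot\nabla\rho^\kappa)\star\varphi_\eps-u\cdot\nabla\rho_\eps .
\end{equation*}
Writing $u\cdot\nabla\rho=\nabla\cdot(u\rho)$ and using the standard commutator estimate, we aim for
\begin{equation*}
\|R_\eps\|_{H^{-\beta/2}}\lesssim \eps^{\alpha+\beta-1}\|u\|_{C^\alpha}\|\rho^\kappa\|_{H^{\beta/2}} ,
\end{equation*}
or at least $o(1)$ in $L^2_tH^{-\beta/2}$. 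Pairing with $\rho_\eps\in L^2_tH^{\beta/2}$ then shows $\int_0^t\langle R_\eps,\rho_\eps\rangle\dd s\to0$. At that point the identity for $\rho_\eps$, where all terms are regular, passes to the limit $\eps\to0$ and gives the energy equality.

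The main obstacle is this commutator step. Hölder continuity of $u$ alone is not enough; one must exploit the gain of $\beta/2$ derivatives on $\rho^\kappa$ from step 1, and it is exactly the condition $\alpha>1-\beta/2$ (with room to spare) that makes the commutator vanish. I would first try a Littlewood--Paley/paraproduct decomposition of $R_\eps$. If the bookkeeping becomes delicate, a fallback is to prove strong $L^2_tH^{\beta/2}$ convergence of $\rho_n$ in step 1 directly, via a Cauchy estimate on $\rho_n-\rho_m$ that involves the same commutator-type term $(u_n-u_m)\cdot\nabla\rho_m$.
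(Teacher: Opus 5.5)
Your proposal is correct, and you are right to read the middle term as $2\kappa^\beta\int_0^t\||\nabla|^{\beta/2}\rho^\kappa_s\|_{L^2}^2\dd s$; the same slip reappears in the proof of Corollary~\ref{cor:vanishing-viscosity-PDE}.

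Your variance identity is the same computation as the paper's. The two proofs differ in how they handle regularity:
\begin{itemize}
\item The paper assumes $u$ and $\rho_0$ smooth, so $\rho^\kappa$ is smooth and the energy balance follows by testing the equation with $\rho^\kappa$. It then asserts that the general case follows by approximation, using stability of the stochastic flows under approximation of the drift.
\item You stay at the stated regularity. The Lagrangian identity needs no smoothing, since measure preservation, Fubini and the representation formula of Remark~\ref{rem:FP_div_free} already hold for $\rho_0\in L^2$.
\item You prove the energy equality directly on $\rho^\kappa$. First you get $\rho^\kappa\in L^2_tH^{\beta/2}$ by approximation plus the uniqueness part of Theorem~\ref{thm:SWP_stable}. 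Then you remove a spatial mollification through a commutator.
\end{itemize}
Your route makes explicit the one delicate point the paper leaves implicit: weak limits of the smooth energy identities only give an inequality.

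Your commutator step is also easier than you fear. Since $\nabla\cdot u=0$,
\[
\langle R_\eps,\rho_\eps\rangle=-\big\langle (u\rho^\kappa)\star\varphi_\eps-u\,(\rho^\kappa\star\varphi_\eps),\nabla\rho_\eps\big\rangle .
\]
Two crude bounds suffice:
\[
\|(u\rho)\star\varphi_\eps-u\,(\rho\star\varphi_\eps)\|_{L^2}\leq\eps^\alpha\|u\|_{L^\infty_tC^\alpha_x}\|\rho\|_{L^2},
\qquad
\|\nabla(\rho\star\varphi_\eps)\|_{L^2}\lesssim\eps^{\beta/2-1}\|\rho\|_{H^{\beta/2}} .
\]
Together they give
\[
\int_0^t|\langle R_\eps,\rho_\eps\rangle|\dd s\lesssim\eps^{\alpha+\beta/2-1}\|\rho^\kappa\|_{L^\infty_tL^2}\|\rho^\kappa\|_{L^2_tH^{\beta/2}},
\]
which tends to zero because $\alpha>1-\beta/2$. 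You need neither paraproduct bookkeeping nor an $H^{-\beta/2}$ bound on $R_\eps$.
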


\begin{proof}
For simplicity, we take $\rho_0$ and $u$ to be smooth; the general case then follows from approximation arguments (the stochastic flows $X^\kappa$ from Theorem~\ref{thm:SWP_stable} are also well-behaved under approximations of the drift $u$, cf.~\cite{CSZ2018,BDG2025}). As a consequence, we may assume the solution $\rho^\kappa$ to be smooth as well.
The first equality in~\eqref{eq:fluctuation_dissipation} follows from energy balance, obtained by testing $\rho^\kappa$ against the PDE~\eqref{eq:FP_PDE_stable_v2} itself.
Concerning the second equality, since $X^\kappa_{0,t}$ and its inverse are Lebesgue measure-preserving by Remark~\ref{rem:FP_div_free}, we have
\begin{align*}
    \| \rho_0\|_{L^2}^2
    = \int_{\T^d} |\rho_0(x)|^2 \dd x
    = \E\left[ \int_{\T^d} |(\rho_0(X^\kappa_{0,t})^{-1}(x))|^2 \dd x \right]
    = \int_{\T^d} \E\big[|(\rho_0(X^\kappa_{0,t})^{-1}(x))|^2\big] \dd x.
\end{align*}
On the other hand, by~\eqref{eq:FP_representation_formula2} it holds that
\begin{align*}
    \| \rho_t^\kappa\|_{L^2}^2
    = \int_{\T^d} |\rho_t^\kappa(x)|^2 \dd x
    = \int_{\T^d} \big| \E[(\rho_0(X^\kappa_{0,t})^{-1}(x))]\big|^2 \dd x.
\end{align*}
Combining the two above identities yields the desired conclusion.
\end{proof}

\begin{remark}\label{rem:regbynoise_time_integrability}
    In Theorems~\ref{thm:SWP_fBm}-\ref{thm:SWP_stable} we focused on $u\in L^\infty([0,1];C^\alpha(\T^d))$, as this is the case mostly studied in the literature, but similar results holds for drifts $u\in L^q([0,1];C^\alpha(\T^d))$ with $q<+\infty$ large enough, up to minor modifications. For fBm, the results from~\cite{GalGer2025} apply for
    \begin{equation*}
        u\in L^q([0,1];C^\alpha(\T^d)),\quad q\geq 2,\quad \alpha>1-\frac{1}{2H};
    \end{equation*}
    instead for isotropic $\beta$-stable processes, the results from~\cite{tian2025sdes} guarantee strong well-posedness under the condition
    \begin{equation*}
        u\in L^q([0,1];C^\alpha(\T^d)), \quad \alpha>1-\frac{\beta}{2},\quad \alpha>1-\beta+\frac{\beta}{q}.
    \end{equation*}
    Notice how one can handle simultaneously a special subcase of both results by enforcing the condition
    \begin{equation}\label{eq:condition_SWP_time_integrable}
        u\in L^q([0,1];C^\alpha(\T^d)), \quad q\geq 2,\quad \alpha>\alpha_W.
    \end{equation}
\end{remark}

\section{Stochastic estimates for oscillatory integrals}\label{sec:stochastic_estimates}

The goal of this section is to state and prove Lemma~\ref{lem:lucio_v2} below, capturing the smallness (in probability) of integrals of the form $\int_s^t f(r,X_r) \dd r$, due to cancellations occurring whenever $f$ is concentrated on high frequencies (thus highly oscillatory) and the stochastic process $X$ is ``irregular'' enough.
The oscillatory nature of $f$ is measured via negative norms $\| \cdot\|_{C^{-\theta}_x}$, while the properties of $X$ are encoded in it being either a fractional Brownian motion or a stable process, up to a more-regular-in-time perturbation (see condition~\eqref{eq:slowly_varying_process} below).

To this end, we preliminarily need to introduce a few notations and recall a version of L\^e's Stochastic Sewing Lemma (SSL)~\cite{le2020sewing}, specifically the SSL with shifts from~\cite{Gerencser2023}.

Given an interval $[S,T]$, we denote the associated 2- and 3-dimensional simplices by $[S,T]^2_\leq :=\{(s,t)\in [S,T]^2 : s\leq t\}$, $[S,T]^3_\leq :=\{(s,u,t)\in [S,T]^3 : s\leq u\leq t\}$. For $(s,t)\in [S,T]^2_\leq$, define $s_-:=s-(t-s)$. We then define slightly more restricted sets of pairs/triples by
\begin{align*}
    & \overline{[S,T]}^2_\leq :=\{(s,t)\in [S,T]^2_\leq : s_-\geq S\},\\
    & \overline{[S,T]}^3_\leq :=\{(s,u,t)\in [S,T]^3_\leq : \min\{(u-s),(t-s)\geq (t-s)/3, \ s_-\geq S\}.
\end{align*}
The next statement is a simplified version of~\cite[Lem. 2.2]{Gerencser2023} (see also~\cite[Lem. 2.5]{GalGer2025} for a more general variant). Therein we fix a probability space $(\Omega,\cF,\P)$ on which we are given a generic filtration $(\cF_t)_{t\in [S,T]}$. Recall the notation $\E_s = \E[\,\cdot\, | \cF_s]$.

\begin{lemma}[Shifted SSL]\label{lem:shifted_SSL}
    Let $(A_{s,t}: (s,t)\in \overline{[S,T]}^2_\leq)$ be a family of random variables in $L^2(\Omega)$ such that $A_{s,t}$ is $\cF_t$-measurable. Suppose that there exist positive constants $C_1$, $C_2$, $\eps_1$, $\eps_2$ such that
    \begin{align}
        \| A_{s,t}\|_{L^2(\Omega)} & \leq C_1 |t-s|^{\frac{1}{2}+\eps_1} & \forall\, (s,t)\in \overline{[S,T]}^2_\leq, \label{eq:shifted_SSL_assumption1}\\
        \| \E_{s-(t-s)} (A_{s,t}-A_{s,u}-A_{u,v})\|_{L^2(\Omega)} & \leq C_2 |t-s|^{1+\eps_2} & \forall\, (s,u,t)\in \overline{[S,T]}^3_\leq.\label{eq:shifted_SSL_assumption2}
    \end{align}
    Then there exist finite constants $K_1$, $K_2$, only depending on $\eps_1$, $\eps_2$, and a unique (up to modification) $(\cF_t)_t$-adapted process $\mathcal{A}$ such that $\mathcal{A}_S=0$ and the following bounds hold:
    \begin{align}
        \| \mathcal{A}_t-\mathcal{A}_s- A_{s,t}\|_{L^2(\Omega)} & \leq K_1 C_1 |t-s|^{\frac{1}{2}+\eps_1} + K_2 C_2 |t-s|^{1+\eps_2} & \forall\, (s,t)\in \overline{[S,T]}^2_\leq, \label{eq:shifted_SSL_conclusion1}\\
        \| \E_{s-(t-s)} (\mathcal{A}_t-\mathcal{A}_s- A_{s,t})\|_{L^2(\Omega)} & \leq K_2 C_2 |t-s|^{1+\eps_2} & \forall\, (s,u,t)\in \overline{[S,T]}^2_\leq,\label{eq:shifted_SSL_conclusion2}\\
        \| \mathcal{A}_t-\mathcal{A}_s\|_{L^2(\Omega)} & \leq K_1 C_1 |t-s|^{\frac{1}{2}+\eps_1} + K_2 C_2 |t-s|^{1+\eps_2} & \forall\, (s,t)\in [S,T]^2_\leq \label{eq:shifted_SSL_conclusion3}.
    \end{align}
    Moreover the increments $\mathcal{A}_t-\mathcal{A}_s$ are the unique limits in $L^2(\Omega)$ of Riemann sums over dyadic partition of $A_{s,t}$ (see~\cite[eq. (2.9)]{GalGer2025} for the exact definition).
\end{lemma}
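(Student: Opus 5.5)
The plan is to follow the standard proof of Lê's stochastic sewing lemma, adapted to the shifted conditioning as in~\cite{Gerencser2023}. Fix $(s,t)\in\overline{[S,T]}^2_\leq$ and consider the dyadic partitions $\pi_n$ of $[s,t]$ with mesh $2^{-n}(t-s)$. Set $A^n_{s,t}:=\sum_{[u,v]\in\pi_n}A_{u,v}$. The first goal is to show that $(A^n_{s,t})_n$ is Cauchy in $L^2(\Omega)$, with
\begin{equation*}
\|A^{n+1}_{s,t}-A^n_{s,t}\|_{L^2(\Omega)}\lesssim C_1 2^{-n\eps_1}|t-s|^{\frac12+\eps_1}+C_2 2^{-n\eps_2}|t-s|^{1+\eps_2}.
\end{equation*}
Summing in $n$ then gives~\eqref{eq:shifted_SSL_conclusion1}.

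The difference $A^{n+1}_{s,t}-A^n_{s,t}=-\sum_{i}\delta A_{u_i,m_i,v_i}$ is a sum of $2^n$ terms $\delta A_{u,m,v}:=A_{u,v}-A_{u,m}-A_{m,v}$, where $m$ is the midpoint of $[u,v]$; all these triples lie in $\overline{[S,T]}^3_\leq$ once $s_-\ge S$. To each term I apply the shifted decomposition $\delta A=\E_{u-(v-u)}\delta A+(\delta A-\E_{u-(v-u)}\delta A)$. The conditioned parts are bounded directly by~\eqref{eq:shifted_SSL_assumption2}, and their sum is at most $2^n C_2(2^{-n}|t-s|)^{1+\eps_2}$.

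The remainders $Z_i:=\delta A_i-\E_{u_i-(v_i-u_i)}\delta A_i$ are not martingale differences, because the conditioning lags by one interval. The main step is therefore to split the index set into two (or three) interleaved subfamilies, taking $i$ even and $i$ odd. Within a subfamily, $Z_i$ is $\cF_{v_i}$-measurable and the next term in that subfamily satisfies $\E_{v_i}Z_{i+2}=0$, since $u_{i+2}-(v_{i+2}-u_{i+2})\ge v_i$. So each subfamily is a martingale-difference sequence. Orthogonality then gives $\|\sum Z_i\|_{L^2}^2\le\sum\|Z_i\|_{L^2}^2\le 4\sum\|\delta A_i\|_{L^2}^2$, and this is bounded via~\eqref{eq:shifted_SSL_assumption1} by $2^n C_1^2(2^{-n}|t-s|)^{1+2\eps_1}$, which yields the $C_1$ term.

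Having the limit $\mathcal{A}_{s,t}$, additivity $\mathcal{A}_{s,t}=\mathcal{A}_{s,u}+\mathcal{A}_{u,t}$ for dyadic points is immediate from the construction. For general points, and for uniqueness, I would compare any two candidate processes: their difference $D$ is additive and obeys bounds of type~\eqref{eq:shifted_SSL_conclusion1}–\eqref{eq:shifted_SSL_conclusion2} with zero germ. Applying the same even/odd martingale argument to $D$ over fine partitions, with $\E D=$ a sum of small conditioned terms, shows $\|D_{s,t}\|\to0$. Define $\mathcal{A}_t:=\mathcal{A}_{S',t}$ via concatenation. The shifted restriction $s_-\ge S$ is handled by covering $[S,T]$ with intervals on which it holds and patching; this is also how one obtains~\eqref{eq:shifted_SSL_conclusion3} on all of $[S,T]^2_\leq$, using finitely many geometric subintervals near $S$ and the triangle inequality.

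Estimate~\eqref{eq:shifted_SSL_conclusion2} follows from the same telescoping: apply $\E_{s_-}$ to $A^{n+1}-A^n$, note that $\E_{s_-}\E_{u_i-(v_i-u_i)}=\E_{s_-}$ by the tower property, and observe that the martingale part disappears, leaving only the $C_2$ sum.

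I expect the main obstacle to be the bookkeeping around the shifted conditioning. One must check that every dyadic triple falls in $\overline{[S,T]}^3_\leq$, including the leftmost ones, where $u-(v-u)\ge s_-\ge S$ is needed, and that the parity splitting yields genuine orthogonality. The rest closely mirrors~\cite[Lem. 2.2]{Gerencser2023}.
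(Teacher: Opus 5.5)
Your dyadic step is correct. Every midpoint triple of the dyadic partition of an admissible $[s,t]$ lies in $\overline{[S,T]}^3_\leq$. The remainders $\delta A_i-\E_{u_i-(v_i-u_i)}\delta A_i$ split into two martingale-difference families. You therefore get a two-parameter limit $\mathcal A_{s,t}$ that obeys \eqref{eq:shifted_SSL_conclusion1}--\eqref{eq:shifted_SSL_conclusion2} and is additive at the dyadic points of $[s,t]$.

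The gap is the next step: producing a single adapted process with $\mathcal A_t-\mathcal A_s=\mathcal A_{s,t}$ for every admissible pair. The dyadic grids of $[S',s]$, $[S',t]$ and $[s,t]$ do not align, so $\mathcal A_{S',t}=\mathcal A_{S',s}+\mathcal A_{s,t}$ does not follow from your construction. Deriving it by comparing two candidate processes is circular. The orthogonality argument you use for uniqueness only applies to processes that are already additive and already satisfy \eqref{eq:shifted_SSL_conclusion1}--\eqref{eq:shifted_SSL_conclusion2} at all admissible pairs, and no such process has been built. Uniqueness also needs \eqref{eq:shifted_SSL_conclusion3}, because no admissible pair links $S$ to any $t>S$. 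For instance, $D_t=Y\mathbbm{1}_{\{t>S\}}$ with $Y$ $\cF_S$-measurable satisfies the first two bounds with zero germ but is not zero.

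This is not a technicality. Your argument only ever uses \eqref{eq:shifted_SSL_assumption2} at midpoint triples, and that cannot suffice. Take the trivial filtration, $T-S\leq 1$, and the deterministic germ $A_{s,t}=(t-s)\cos\bigl(2\pi\log_2(t-s)\bigr)$. Then:
\begin{itemize}
\item $|A_{s,t}|\leq|t-s|$;
\item $A_{s,t}-A_{s,m}-A_{m,t}=0$ at every midpoint $m$;
\item every dyadic sum equals $A_{s,t}$, so all your steps go through with $\mathcal A_{s,t}=A_{s,t}$.
\end{itemize}
However, for the admissible triple $(s,s+h,s+3h)$ with $h=2^{-n}$ one computes $\delta A_{s,s+h,s+3h}=3h\bigl(\cos(2\pi\log_2 3)-1\bigr)$, which is of exact order $h$. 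With a trivial filtration, \eqref{eq:shifted_SSL_conclusion2} reads $|\mathcal A_t-\mathcal A_s-A_{s,t}|\leq K_2C_2|t-s|^{1+\eps_2}$. An additive $\mathcal A$ satisfying this would force $|\delta A_{s,u,t}|\leq 3K_2C_2|t-s|^{1+\eps_2}$, so no such $\mathcal A$ exists.

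The missing step must therefore use \eqref{eq:shifted_SSL_assumption2} at middle-third triples that are not midpoints. Its role is to show that Riemann sums along non-dyadic partitions have the same limit. For example, splitting $N$ equal pieces into blocks of $\lfloor N/2\rfloor$ and $\lceil N/2\rceil$ pieces produces exactly such triples, with the ratio $1/3$ attained at $N=3$. The paper does not prove the lemma itself; it quotes it from \cite{Gerencser2023}, whose proof (following \cite{le2020sewing}) is where this step has to be supplied.
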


In the same setting as above, let $\psi:[S,T]\times \Omega\to \R$ be a $(\cF_t)_{t}$-progressive process. 
Given $\varsigma>0$, we denote by $\llbracket \psi\rrbracket_\varsigma$ the smallest deterministic constant such that
\begin{equation}\label{eq:slowly_varying_process}
    \| \E_s|\psi_t-\E_s\psi_t|\|_{L^\infty(\Omega)}  \leq \llbracket \psi\rrbracket_\varsigma |t-s|^\varsigma \quad\forall\,
    s,t\in [S,T]^2_\leq.
\end{equation}
This condition quantifies the (conditional) error made by trying to predict $\psi_t$ given the available information $\cF_s$.
In the next statement, given a reference filtration $(\cF_t)_t$, $W$ is either a $(\cF_t)_t$-fBm or a $(\cF_t)_t$-$\beta$-stable process, in the sense discussed respectively in Sections~\ref{subsec:fbm}-\ref{subsec:stable_levy}.

\begin{lemma}\label{lem:lucio_v2}
Let $(\cF_t)_{t\in [S,T]}$ be a given filtration, $W^H$ be a $\R^d$-valued $(\cF_t)_t$-fBm of parameter $H\in (0,+\infty)$ and let $f\in L^\infty([S,T];C^1_x)$. Let $Z$ be a $\R^d$-valued $\cF_S$-measurable random variable and $\psi$ be a $\R^d$-valued, $(\cF_t)_t$-progressive process such that $\llbracket \psi\rrbracket_\varsigma<+\infty$ for some $\varsigma>0$ (for $\llbracket \psi\rrbracket_\varsigma$ defined by~\eqref{eq:slowly_varying_process}). Then:
\begin{enumerate}
    \item If $H\in (0,1/2)$ and $\varsigma=1$, there exists a constant $C_1=C_1(H,d)>0$ such that, uniformly over $\kappa\in (0,1]$, for every $(s,t)\in [S,T]^2_\leq$ one has
    \begin{equation}\label{eq:sewing_fBm1}
	   \bigg\|\int_s^t f(r, Z+ \kappa W^H_r + \kappa\psi_r) \dd r \bigg\|_{L^2(\Omega)} \leq C_1 \kappa^{-1} |t-s|^{1-H} (1+\llbracket \psi\rrbracket_1)\| f\|_{L^\infty([S,T];C^{-1}_x)}.
    \end{equation}
    \item If $H\in [1/2,+\infty)\setminus \N$ and $\varsigma>H$, then for any $\theta>0$ such that
    \begin{equation}\label{eq:SSL_regularity_parameter_fbm}
        \theta<\frac{1}{2H}, \quad \theta<\frac{\varsigma}{H}-1
    \end{equation}
    there exists a constant $C_2=C_2(H,d,\theta,\varsigma)>0$ such that, uniformly over $\kappa\in (0,1]$, for every $(s,t)\in [S,T]^2_\leq$ one has
    \begin{equation}\label{eq:sewing_fBm2}
	   \bigg\| \int_s^t f(r, Z+ \kappa W^H_r + \kappa \psi_r) \dd r\bigg\|_{L^2(\Omega)} \leq C_2 \kappa^{-\theta} |t-s|^{1-\theta H} (1+\llbracket \psi\rrbracket_\varsigma)\| f\|_{L^\infty([S,T];C^{-\theta}_x)}.
    \end{equation}
\end{enumerate}
Similarly, if $f$, $Z$, $\psi$ are as above and $W^\beta$ is a $\beta$-stable Lévy process with $\beta\in (0,2]$ and $\varsigma>1/\beta$, then for any $\theta>0$ such that
\begin{equation}\label{eq:SSL_regularity_parameter_stable}
        \theta<\frac{\beta}{2}, \quad \theta<\varsigma\beta-1
    \end{equation}
there exists a constant $C_3=C_3(\beta,d,\theta,\varsigma)>0$ such that, uniformly over $\kappa\in (0,1]$, for every $(s,t)\in [S,T]^2_\leq$
    \begin{equation}\label{eq:sewing_stable}
	   \bigg\| \int_s^t f(r, Z+ \kappa W^\beta_r + \kappa \psi_r) \dd r\bigg\|_{L^2(\Omega)}
       \leq C_3 \kappa^{-\theta} |t-s|^{1-\theta H} (1+\llbracket \psi\rrbracket_\varsigma)\| f\|_{L^\infty([S,T];C^{-\theta}_x)}.
    \end{equation}
\end{lemma}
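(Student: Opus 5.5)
We apply the shifted stochastic sewing Lemma~\ref{lem:shifted_SSL} to the germ
\begin{equation*}
A_{s,t} := \E_{s_-}\int_s^t f\big(r, Z+\kappa W_r+\kappa\psi_r\big)\dd r , \qquad s_-=s-(t-s),
\end{equation*}
where $W$ denotes either $W^H$ or $W^\beta$. The process $\mathcal A_t=\int_S^t f(r,Z+\kappa W_r+\kappa\psi_r)\dd r$ is well defined because $f\in L^\infty_tC^1_x$. We will check that it satisfies the conclusions of the lemma, so that by uniqueness it coincides with the sewing of $A$. The $C^1$ regularity of $f$ is only qualitative; all bounds will depend on $\|f\|_{L^\infty_tC^{-\theta}_x}$. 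The overall strategy is therefore to bound $\|A_{s,t}\|_{L^2}$ and $\|\E_{s_-}\delta A_{s,u,t}\|_{L^2}$, where $\delta A_{s,u,t}=A_{s,t}-A_{s,u}-A_{u,t}$. Conclusion~\eqref{eq:shifted_SSL_conclusion3} then gives the claim on $\overline{[S,T]}^2_\leq$. The restriction $s_-\geq S$ is removed by a dyadic decomposition of $[s,t]$ into pieces $[s+2^{-n-1}(t-s),\,s+2^{-n}(t-s)]$, each of which is admissible, and summing the resulting geometric series (the exponent $1-\theta H>0$ makes this sum converge).

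\textbf{Step 1: one-point bound.} For $r\in[s,t]$ we write
\begin{equation*}
Z+\kappa W_r+\kappa\psi_r = \Xi_r + \kappa\big(W_r-\E_{s_-}W_r\big) + \kappa\big(\psi_r-\E_{s_-}\psi_r\big),
\end{equation*}
where $\Xi_r$ is $\cF_{s_-}$-measurable. Replacing $\psi_r$ by $\E_{s_-}\psi_r$ costs at most $\kappa\|f\|_{C^1}\E_{s_-}|\psi_r-\E_{s_-}\psi_r|$, and this must not be paid with the $C^1$ norm. We therefore first smooth: by the LND~\eqref{eq:LND_fBm} (respectively~\eqref{eq:LND_stable}), conditioning on $\cF_{(s_-+r)/2}$ produces $\mathcal P_{c\kappa^2|r-s_-|^{2H}}f$, respectively $\mathcal P^\beta_{\kappa^\beta|r-s_-|}f$, evaluated at a point that still contains the perturbation. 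Writing $\sigma:=\kappa|r-s_-|^H$ (with $H=1/\beta$ in the stable case), the heat-kernel and Appendix~\ref{app:useful} estimates give
\begin{equation*}
\|\mathcal P f\|_{L^\infty}\lesssim \sigma^{-\theta}\|f\|_{C^{-\theta}}, \qquad \|\nabla\mathcal P f\|_{L^\infty}\lesssim \sigma^{-1-\theta}\|f\|_{C^{-\theta}} .
\end{equation*}
Hence
\begin{equation*}
|\E_{s_-}f(r,\cdots)| \lesssim \kappa^{-\theta}|t-s|^{-\theta H}\|f\|_{C^{-\theta}}\Big(1+\kappa\,\sigma^{-1}\llbracket\psi\rrbracket_\varsigma|t-s|^{\varsigma}\Big),
\end{equation*}
and the last term is $\lesssim\llbracket\psi\rrbracket_\varsigma |t-s|^{\varsigma-H}$, which is bounded since $\varsigma>H$, using $\kappa\le 1$. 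Integrating over $r\in[s,t]$ gives $\|A_{s,t}\|_{L^\infty}\lesssim \kappa^{-\theta}|t-s|^{1-\theta H}(1+\llbracket\psi\rrbracket)\|f\|$. The exponent exceeds $1/2$ exactly when $\theta<1/(2H)$, respectively $\theta<\beta/2$, which yields~\eqref{eq:shifted_SSL_assumption1}. In case (1) we take $\theta=1$; there $1-H>1/2$ because $H<1/2$, and $\varsigma=1>H$.

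\textbf{Step 2: the $\delta A$ term.} We have
\begin{equation*}
\E_{s_-}\delta A_{s,u,t} = \E_{s_-}\int_u^t \big(\E_{s_-}-\E_{u_-}\big) f\,\dd r ,
\end{equation*}
and in the same way for the piece over $[s,u]$. Both conditional expectations are of the form computed in Step~1, so the difference is controlled by two contributions. The first is the change of the Gaussian/L\'evy mean, which is smooth in time for fBm, with no change for stable processes. The second is the fluctuation of $\psi$ between $s_-$ and $u_-$, measured by $\llbracket\psi\rrbracket_\varsigma|t-s|^\varsigma$. In both cases the difference is paid with one extra derivative of $\mathcal Pf$, which costs $\sigma^{-1}$. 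This gives
\begin{equation*}
\|\E_{s_-}\delta A_{s,u,t}\|\lesssim \kappa^{-\theta}\|f\|\,(1+\llbracket\psi\rrbracket)\,|t-s|^{1-\theta H+\varsigma-H}.
\end{equation*}
For the fBm mean term, the smoothness of the conditional mean on the separated interval gives a factor $|t-s|^{H}\cdot|t-s|^{-H}$ times a gain in regularity; here we follow the standard computation in~\cite{GalGer2025}. The exponent $1-\theta H+\varsigma-H$ exceeds $1$ precisely when $\theta<\varsigma/H-1$, respectively $\theta<\varsigma\beta-1$, which is~\eqref{eq:SSL_regularity_parameter_fbm} and~\eqref{eq:SSL_regularity_parameter_stable}.

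\textbf{Main obstacle.} The hardest point is the treatment of $\psi$: it is only controlled in conditional $L^1$ through~\eqref{eq:slowly_varying_process}, and it cannot be frozen without spending a full derivative of $f$. This is why the key step is to freeze $\psi$ only after the heat smoothing at scale $\sigma$ has been applied. Matching the exponents then requires $\varsigma>H$, and this is exactly where the stated restrictions on $\theta$ come from.
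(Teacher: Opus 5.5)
There is a genuine gap in Step 1. The local nondeterminism formulas \eqref{eq:LND_fBm} and \eqref{eq:LND_stable} apply only when the shift added to $W_r$ is measurable with respect to the conditioning $\sigma$-algebra. When you condition on $\cF_v$ with $v=(s_-+r)/2$, the term $\kappa\psi_r$ is still inside $f$. But $\psi_r$ is only $\cF_r$-measurable, and in general it is correlated with $W_r-\E_v W_r$; in the autonomous application, $\psi^\kappa$ is built from $X^\kappa$, hence from $W$. So $\E_v f(r,\Xi_r+\kappa W_r+\kappa\psi_r)$ is \emph{not} $\mathcal P f$ evaluated at a point still containing $\psi_r$, and your ``freeze $\psi$ after smoothing'' step has nothing to act on. As you note yourself, freezing before conditioning costs $\|f\|_{C^1_x}$. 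A one-point bound on $\E_{s_-}f(r,Z+\kappa W_r+\kappa\psi_r)$ in terms of $\|f\|_{C^{-\theta}_x}$, for a general progressive $\psi$, would require something like regularity of the time marginals of $W+\psi$. That is not available here and is essentially the whole content of the lemma.

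Step 2 makes this worse. Your germ is $A_{s,t}=\E_{s_-}(\mathcal A_t-\mathcal A_s)$, and for $(s,u,t)\in\overline{[S,T]}^3_\leq$ one has $s_-\leq s-(u-s)$ and $s_-\leq u-(t-u)$. By the tower property, $\E_{s_-}\delta A_{s,u,t}=0$ identically. The $\psi$-fluctuation and conditional-mean contributions you describe there do not arise for your germ, so the entire argument rests on the unproven Step 1. Consistently with this, if Step 1 held you would get the lemma without the restriction $\theta<\varsigma/H-1$ (resp.\ $\theta<\varsigma\beta-1$), which shows the difficulty has been misplaced.

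The fix, which is what the paper does, is to freeze $\psi$ inside the germ:
\[
A_{s,t}:=\E_{s_-}\int_s^t f\big(r,Z+\kappa W_r+\kappa\,\E_{s_-}\psi_r\big)\dd r .
\]

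\textbf{The $L^2$ bound.} The shift $Z+\kappa\E_{s_-}\psi_r$ is now $\cF_{s_-}$-measurable, so LND applies directly. This gives $\|A_{s,t}\|_{L^\infty(\Omega)}\lesssim\kappa^{-\theta}|t-s|^{1-\theta H}\|f\|_{L^\infty_tC^{-\theta}_x}$, with no dependence on $\psi$ at all.

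\textbf{The $\delta A$ term.} All the $\psi$-dependence moves into $\E_{s_-}\delta A$. This is now nonzero and reduces to terms of the form
\[
\E_{s_1}\E_{s_2}\int_s^u\big[f^\kappa_r(W_r+\E_{s_1}\psi_r)-f^\kappa_r(W_r+\E_{s_2}\psi_r)\big]\dd r,
\]
with $s_1=s_-$ and $s_2=s-(u-s)$, plus the analogous term on $[u,t]$. After conditioning on $\cF_{s_2}$, both shifts are measurable, so LND legitimately produces the smoothed kernel. Its $C^1_x$ norm is $\lesssim\kappa^{-\theta}|t-s|^{-(1+\theta)H}\|f\|_{L^\infty_tC^{-\theta}_x}$, with $H$ replaced by $1/\beta$ in the stable case. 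This factor multiplies
\[
\E_{s_1}|\E_{s_1}\psi_r-\E_{s_2}\psi_r|\leq\E_{s_1}|\psi_r-\E_{s_1}\psi_r|\lesssim\llbracket\psi\rrbracket_\varsigma|t-s|^\varsigma .
\]
This is exactly where the condition $\theta<\varsigma/H-1$ enters.

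\textbf{Identification and range.} The regularity $f\in C^1_x$ is used only qualitatively, to identify the sewing of this germ with $\int_s^t f(r,Z+\kappa W_r+\kappa\psi_r)\dd r$. Finally, your dyadic decomposition is unnecessary, since \eqref{eq:shifted_SSL_conclusion3} already holds on all of $[S,T]^2_\leq$.
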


\begin{proof}
    The proof follows similar computations as~\cite[Lem. 3.1]{GalGer2025} and is based on an application of Lemma~\ref{lem:shifted_SSL} to
    \begin{equation*}
        A_{s,t}:=\E_{s-(t-s)} \int_s^t f(r,Z+\kappa W_r + \kappa \E_{s-(t-s)} \psi_r) \dd r;
    \end{equation*}
    arguing as in~\cite{GalGer2025}, using that $f\in L^\infty(S,T;C^1_x)$, it's easy to see that $\mathcal{A}_t-\mathcal{A}_s=\int_s^t f(r,Z+\kappa W_r+\kappa\psi_r)\dd r$ is the unique limit of the Riemann sums associated to $A_{s,t}$; therefore, in order to deduce the estimates~\eqref{eq:sewing_fBm1}-\eqref{eq:sewing_fBm2}-\eqref{eq:sewing_stable}, it suffices to verify that conditions~\eqref{eq:shifted_SSL_assumption1}-\eqref{eq:shifted_SSL_assumption2} are met (for suitable constants $C_i$) and then apply~\eqref{eq:shifted_SSL_conclusion3} (keeping in mind that $|t-s|\leq |T-S|\leq 1$).

    First consider the case of fBm $W^H$ with $H\in (0,+\infty)\setminus\N$.  For notational convenience, let us set $f^\kappa_r(x):=f(r,Z+\kappa x)$.\footnote{Technically this makes $f^\kappa_r$ a random function due to the presence of $Z$. However, since $Z$ is $\cF_S$-adapted and we are always conditioning on $\cF_v$ with $v\geq S$, we can effectively treat it as a deterministic constant.}
    By the LND property (cf.~\eqref{eq:LND_fBm}), we have
    \begin{align*}
        A_{s,t}
        =\int_s^t \E_{s-(t-s)} f(r,Z+\kappa W^H_r + \kappa \psi_r) \dd r
        = \int_s^t (\mathcal{P}_{c_H |r-s+t-s|^{2H}} f^\kappa_r)(\E_{s-(t-s)} (W_r+\psi_r)) \dd r;
    \end{align*}
    heat kernel estimates and the properties of Besov norms under translations and rescalings then imply that
    \begin{align*}
        \| A_{s,t}\|_{L^2(\Omega)}\leq \int_s^t \| \mathcal{P}_{c_H |r-s+t-s|^{2H}} f^\kappa_r\|_{C^0_x} \dd r
        & \lesssim |t-s|^{-\theta H} \int_s^t \| f^\kappa_r\|_{C^{-\theta}_x} \dd r\\
        & \lesssim |t-s|^{1-\theta H} \kappa^{-\theta} \| f\|_{L^\infty([S,T];C^{-\theta}_x)}.
    \end{align*}
    Now fix $(s,u,t)\in \overline{[S,T]}^3_\leq$ and let $s_1=s-(t-s)$, $s_2=s-(u-s)$, $s_3=u-(t-u)$, $s_4=s$, $s_5=u$, $s_6=t$.
    These points are almost ordered according to their indices, except $s_3$ and $s_4$, for which $s_4\leq s_3$ may happen, but this does not affect the computations below.
    As in~\cite[Lem. 3.1]{GalGer2025}, one has
    \begin{align*}
        \E_{s-(t-s)} (A_{s,t}-A_{s,u}-A_{u,v})
        = I+J
        := & \E_{s_1} \E_{s_2} \int_{s_4}^{s_5} [f^\kappa_r(W^H_r + \E_{s_1} \psi_r)-f^\kappa_r(W^H_r + \E_{s_2} \psi_r)] \dd r\\
        & + \E_{s_1} \E_{s_3} \int_{s_5}^{s_6} [f^\kappa_r(W^H_r + \E_{s_1} \psi_r)-f^\kappa_r(W^H_r + \E_{s_3} \psi_r)] \dd r;
    \end{align*}
    the estimates for the two terms are similar, so let us focus on $I$. Again thanks to the LND property~\eqref{eq:LND_fBm} and heat kernel estimates, it holds that
    \begin{align*}
        |I|
        & \leq \E_{s_1} \int_{s_4}^{s_5} |\mathcal{P}_{c_H|r-s_2|^{2H}} f^\kappa_r(\E_{s_2} W^H_r + \E_{s_1} \psi_r)-\mathcal{P}_{c_H|r-s_2|^{2H}} f^\kappa_r(\E_{s_2} W^H_r + \E_{s_2} \psi_r)|\\
        & \leq \E_{s_1} \int_{s_4}^{s_5} \| \mathcal{P}_{c_H|r-s_2|^{2H}} f^\kappa_r\|_{C^1_x} |\E_{s_1} \psi_r-\E_{s_2} \psi_r| \dd r\\
        & \lesssim \int_{s_4}^{s_5} |s_4-s_2|^{-(1+\theta) H} \| f^\kappa_r\|_{C^{-\theta}_x} \E_{s_1}|\E_{s_1} \psi_r-\E_{s_2} \psi_r| \dd r\\
        & \lesssim \kappa^{-\theta}\| f\|_{L^\infty([S,T];C^{-\theta}_x)} |t-s|^{-(1+\theta) H} \int_{s_4}^{s_5} \E_{s_1}|\E_{s_1} \psi_r- \psi_r| \dd r\\
        & \lesssim \kappa^{-\theta}\| f\|_{L^\infty([S,T];C^{-\theta}_x)} \llbracket \psi\rrbracket_\varsigma |t-s|^{1+\varsigma-(1+\theta) H}
    \end{align*}
    where in the above we used several times properties of conditional expectation (e.g. $\E_{s_2} \E_{s_1} = \E_{s_1}$) and the assumption $(s,u,t)\in \overline{[S,T]}^3_\leq$.
    Running a similar estimate for $J$ and taking the $L^2(\Omega)$-norm on both sides, overall we find
    \begin{equation}\label{eq:SSL_proof_main_estim}\begin{split}
        & \| A_{s,t}\|_{L^2(\Omega)} \lesssim \kappa^{-\theta} \| f\|_{L^\infty([S,T];C^{-\theta}_x)} |t-s|^{1-\theta H},\\
        & \| \E_{s-(t-s)} (A_{s,t}-A_{s,u}-A_{u,v})\|_{L^2(\Omega)} \lesssim \kappa^{-\theta}\| f\|_{L^\infty([S,T];C^{-\theta}_x)} \llbracket \psi\rrbracket_\varsigma |t-s|^{1+\varsigma-(1+\theta) H}.
    \end{split}\end{equation}
    This implies verification of~\eqref{eq:shifted_SSL_assumption1}-\eqref{eq:shifted_SSL_assumption2} with
    \begin{equation*}
        C_1\sim \kappa^{-\theta} \| f\|_{L^\infty([S,T];C^{-\theta}_x)}, \quad
        C_2\sim \kappa^{-\theta}\| f\|_{L^\infty([S,T];C^{-\theta}_x)} \llbracket \psi\rrbracket_\varsigma
    \end{equation*}
    if the parameters $\theta,\varsigma$ are such that
    \begin{equation*}
        1-\theta H >\frac{1}{2}, \quad 1+\varsigma-(1+\theta) H > 1
    \end{equation*}
    which are exactly equivalent to~\eqref{eq:SSL_regularity_parameter_fbm}.
    Therefore~\eqref{eq:shifted_SSL_conclusion3} holds; taking into account $\varsigma>H$ and $|t-s|\leq 1$, this implies~\eqref{eq:sewing_fBm2}.

    When $H<1/2$ and $\varsigma=1$, then condition~\eqref{eq:SSL_regularity_parameter_fbm} is satisfied by $\theta=1$, therefore yielding~\eqref{eq:sewing_fBm1}.

    The proof in the $\beta$-stable case is almost identical, up to applying the independence of increments property instead of LND (cf.~\eqref{eq:LND_stable}) and smoothing estimates in Besov spaces associated to $\mathcal{P}^H$ (cf. Lemma~\ref{lem:stable_kernel_estimates}). In particular, one has
    \begin{equation*}
        \| \mathcal{P}^\beta_{r-s} f^\kappa_r\|_{C^0_x} \lesssim \kappa^{-\theta} \| f\|_{L^\infty([S,T];C^{-\theta}_x)} |r-s|^{-\frac{\theta}{\beta}},\quad
        \| \mathcal{P}^\beta_{r-s} f^\kappa_r\|_{C^1_x} \lesssim \kappa^{-\theta} \| f\|_{L^\infty([S,T];C^{-\theta}_x)} |r-s|^{-\frac{1+\theta}{\beta}}
    \end{equation*}
    and so similar computations yield
    \begin{align*}
        & \| A_{s,t}\|_{L^2(\Omega)} \lesssim \kappa^{-\theta} \| f\|_{L^\infty([S,T];C^{-\theta}_x)} |t-s|^{1-\frac{\theta}{\beta}},\\
        & \| \E_{s-(t-s)} (A_{s,t}-A_{s,u}-A_{u,v})\|_{L^2(\Omega)} \lesssim \kappa^{-\theta}\| f\|_{L^\infty([S,T];C^{-\theta}_x)} \llbracket \psi\rrbracket_\varsigma |t-s|^{1+\varsigma-\frac{1+\theta}{\beta}},
    \end{align*}
    which again allows to apply Lemma~\ref{lem:shifted_SSL} under condition~\eqref{eq:SSL_regularity_parameter_stable} to deduce estimate~\eqref{eq:sewing_stable}.
\end{proof}

\begin{remark}\label{rem:practical_applications_SSL}
    To help the reader, anticipating what is to come, let us explain how Lemma~\ref{lem:lucio_v2} will be applied concretely:
    \begin{itemize}
        \item In the proof of our main result, Theorem~\ref{th:main_theorem}, in Section~\ref{sec:proof_main_thm}, we will take $\psi\equiv 0$. Then $\varsigma$ can be taken arbitrarily large and condition~\eqref{eq:SSL_regularity_parameter_fbm} (respectively~\eqref{eq:SSL_regularity_parameter_stable}) reduces to $\theta<\frac{1}{2H}$ (resp. $\theta<\frac{\beta}{2}$).
        \item In the construction of an autonomous vector field for $d\geq 3$, presented in Section~\ref{subsec:autonomous}, we will apply Lemma~\ref{lem:lucio_v2} for a non-trivial choice of $\psi=\psi^\kappa$; in this case, we have a uniform estimate on $\llbracket \psi^\kappa \rrbracket_\varsigma$ for $\varsigma =1+\alpha H$ (resp. $\varsigma =1+\frac{\alpha}{\beta}$) when the noise is fBm (resp. $\beta$-stable), allowing to verify that conditions~\eqref{eq:SSL_regularity_parameter_fbm}-\eqref{eq:SSL_regularity_parameter_stable} are still met.    
    \end{itemize}
\end{remark}

\section{The vector field $u$}
\label{sec:construction}

In this section, we construct the deterministic velocity field  $u$ which plays a key role in the proof of Theorem~\ref{th:main_theorem}. The construction is inspired by~\cite{CCS2023}.

\subsection{Choice of parameters}\label{subsec:parameters}

We aim to define a super-exponentially decreasing sequence of parameters 
$$a_{q+1} \approx a_q^{1+ \delta}$$
for some $\delta \in (0, 1/100)$ and $a_0 \in (0,1)$. Since we want to deal with natural numbers, we consider $a_0 \in (0,1)$ and we define $(a_q)_{q\in\N}$ so that
\begin{align} \label{eq:a-q}
     a_q/a_{q+1} \in [a_q^{-\delta}, a_q^{-\delta} + 4 ] \cap 4\N \,, \qquad 1/a_q \in 2\N, \quad \forall q \geq 0\,.
\end{align}
We also define the mollification parameter $\ell_q$, the noise parameter $\kappa_q$ and the time parameter $t_{q} \uparrow 1/2$ with $t_0 =0$ as follows:
\begin{align} 
\ell_q:=  a_q^{\gamma} a_{q+1}^{1-\gamma} \quad \forall q\in\N, \label{eq:ell-q}
    \\
    \kappa_q := a_q^{1/2} a_{q+1}^{1/2}  \quad \forall q\in\N, \label{eq:kappa-q}
    \\
    t_{q+1} := t_q + \frac{6}{\pi^2 (q+1)^2} \quad \forall q\in\N,  \label{eq:t-q}
\end{align}
where $\gamma \in (1/2, 1)$ is chosen depending on the process $W$ under consideration.
More precisely,  we can impose that:
\begin{enumerate}[label=(\roman*), ref=\roman*]
    \item \label{eq:i} $\frac{1}{2} < \gamma < 1$ when $W$ is  a fractional Brownian motion of parameter $H \in (0, 1/2)$;
    \item \label{eq:ii} $\max \{ 1- \frac{1}{4H}  , \frac 1 2 \} < \gamma  < 1$ when $H \in [1/2, +\infty)$;
    \item \label{eq:iii} $\max \{ 1- \frac{\beta}{4} , \frac 1 2 \} < \gamma < 1$ when $W$ is a $\beta$-stable Lévy process with $\beta \in (0, 2]$.
\end{enumerate}
Finally, we choose $a_0 \in (0,1)$ sufficiently small in terms of $\delta>0$ and $\gamma\in (1/2,1)$ so that 
\begin{align} \label{eq:a-0}
    \sum_{q=j}^\infty a_q^{\delta(1-\gamma)} < 4a_j^{\delta (1- \gamma)} \quad\forall\,j\geq 0.
\end{align}
The latter follows from
$$
\sum\limits_{q =j}^\infty a_q^{\delta(1-\gamma)} \leq 2 \sum\limits_{q =j}^\infty a_j^{\delta(1-\gamma)(1+\delta)^{q-j}} \leq 2 \sum\limits_{q =0}^\infty  a_j^{\delta(1-\gamma)(1+ \delta q)} =  \frac{2a_j^{\delta(1-\gamma)}}{1-a_j^{\delta^2(1-\gamma)}} < 4 a_j^{\delta (1-\gamma)},
$$
where we used that $a_q\leq a_j$; the last inequality follows from $a_j \leq a_0$, as long as we choose $a_0$ small enough so that $a_0^{\delta^2(1-\gamma)}<1/2$.
%
Since $\gamma>1/2$, up to choosing $a_0$ even smaller, we can further assume that
\begin{align} \label{eq:kappa-q-ell-q}
     10 q^4 \kappa_q \leq \ell_q  \qquad \forall q \in \N.
\end{align}
\begin{convention}\label{convention_parameters}
    From now on, we assume throughout the paper that the parameters $\gamma$, $a_0$, $\delta$ and $(a_q)_q$ are fixed and satisfy conditions~\eqref{eq:a-q}-\eqref{eq:a-0}-\eqref{eq:kappa-q-ell-q}. Whenever needed, we allow ourselves to take $a_0$ even smaller, while still satisfying the same constraints, so to reabsorb irrelevant constants in the estimates.
\end{convention}

Finally, we define the odd and even chessboard sets of size $a>0$ with $a^{-1} \in 2 \mathbb{N}$. Let
\[
\pi(k) := k_1 + k_2 \pmod{2}, \quad k \in \Z^2,
\]
then we define the even and the odd chessboards of size $a$
\begin{equation}\label{eq:defn_chessboards}
    \mathcal{C}^{\even}_a :=
    \Bigg( \bigcup_{\substack{k \in \mathbb{Z}^2\\ \pi(k)=0}} ak + [0,a)^2  \Bigg) \cap [0,1)^2 \qquad
    \mathcal{C}^{\mathrm{odd}}_a  :=
    \Bigg( \bigcup_{\substack{k \in \mathbb{Z}^2\\ \pi(k)=1}} ak + [0,a)^2 \Bigg) \cap [0,1)^2 \,.
\end{equation}

\subsection{Construction of the velocity field}\label{subsec:construction_velocity}

Recall that $ 1/a_q \in 2\N$ for any $q \in \N$; therefore to define a function on $\T$, we can first define it in $[0, 2a_q]$ and then extend by periodicity. Let us split $(0, 2a_q]$ into the following intervals:
\[
I_{i}= \left( 2ia_{q+1}, 2(i+1)a_{q+1}\right], \quad \text{ for } i \in \{0, 1, \dots, N_q\}, \quad N_q:=\frac{a_q}{a_{q+1}}-1.
\]
We specify $\varphi_{q+1}$ on $[0,2a_q]$ as follows: we set $\varphi_{q+1} (0) =0$; for all the intervals $I_i \subset (0, a_q]$, we define $\varphi_{q+1}:I_i \to \R$ to be piecewise linear with derivative constantly equal $+1$ in the first half of $I_i$, and $-1$ in the second half of it. By construction, $\varphi_{q+1}$ is continuous on $[0, 2a_q]$ with $\varphi_{q+1}(0)=\varphi_{q+1}(2a_q)=0$.

This implies that $\sup_{t \in I_i} |\varphi_{q+1}(t)| =  a_{q+1}$. Then we extend $\varphi_{q+1}$ to the interval $(a_q,2a_q]$ as an odd extension:
$$ {\varphi_{q+1}}\big\vert_{(a_q,2a_q]} :=  -{\varphi_{q+1}}\big\vert_{(0, a_q]}(\cdot -a_q) \,.$$
Finally, we extend $\varphi_{q+1}$ by $2a_q$-periodicity, and consider the 1-periodic map $\varphi_{q+1} : \T \to \R$. 
From the construction it is clear that
\begin{equation}\label{eq:estim_varphiq}
    \norm{\varphi_{q+1}}_{C^0} = a_{q+1}, \quad \norm{\varphi_{q+1}'}_{C^0} = 1.
\end{equation}

Let $v_{q+1,1}:\T^2 \to \R^2$ be the divergence-free vector field (horizontal shear flow) given by:
\begin{equation}
v_{q+1, 1}(x_1, x_2):= \frac{a_q}{2}\begin{bmatrix}
           \varphi_{q+1}'(x_2) \\
           0
         \end{bmatrix}.
\end{equation}
It follows from~\eqref{eq:estim_varphiq} that
\begin{align} 
    \norm{v_{q+1,1}}_{C^{-1}_x} \lesssim \frac{a_q}{2}\norm{\varphi_{q+1}}_{C^0_x} =\frac{a_{q+1} a_q }{2}, \quad 
    \|v_{q+1,1}\|_{C^0_x} = \frac{a_q}{2};
\end{align}
by interpolation inequalities, we then also have 
\begin{align} \label{eq:norm-1}
    \|v_{q+1,1}\|_{C^{-\theta}_x}\lesssim a_{q+1}^\theta a_q\quad\forall\, \theta\in [0,1].
\end{align}
We similarly define a vertical shear flow $v_{q+1,2 }: \T^2 \to \R^2$ by:
\begin{equation}
v_{q+1, 2}(x_1, x_2):= \frac{a_{q+1}}{2}\begin{pmatrix}
           0 \\
           \varphi_{q+1}'(x_1) + 1
         \end{pmatrix}.
\end{equation}
Using $\|v_{q+1, 2} \|_{C^0_x} \leq a_{q+1}$ and interpolation inequalities, we similarly obtain
\begin{align} \label{eq:norm-2}
    \|v_{q+1,2}\|_{C^{-\theta}_x}\lesssim a_{q+1}^{1+\theta} \quad\forall\, \theta\in [0,1].
\end{align}

\begin{figure}[htp!]
\centering
\includegraphics[width=0.85\linewidth]{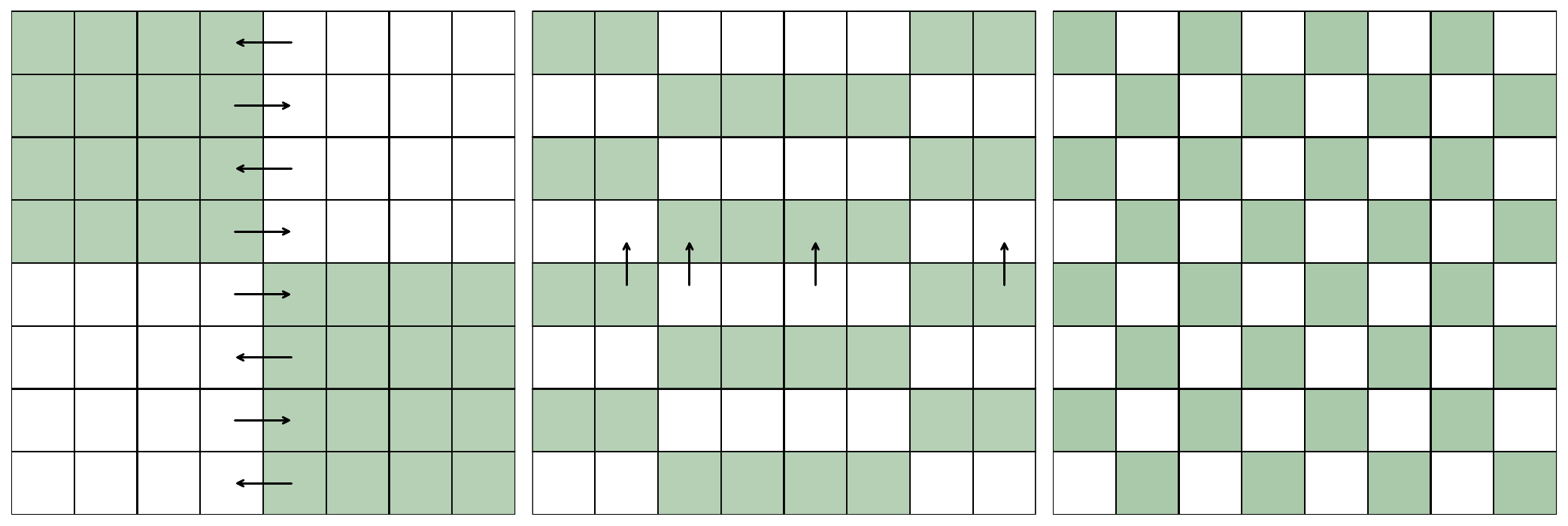}
\caption{The figure on the left shows the even chessboard set at scale $a_0$. The figure in the center shows its image under the time-$1$ Lagrangian flow generated by the vector field $v_{1,1}$. The figure on the right shows the image obtained by then applying the time-$1$ Lagrangian flow generated by $v_{1,2}$ to the previous image. The arrows indicate the velocity field $v_{1,1}$ in the center figure and $v_{1,2}$ in the right figure.}
\label{fig:chessboards}
\end{figure}

We define $ u_{\text{swap}}:\T^2 \to \R^2$ to be the constant vector field
$$v_{\text{swap}, q} := \begin{pmatrix}a_{q+1} \\ 0\end{pmatrix} \,.$$

Recall the definition of $\{t_q\}_{q\in\N}$ from~\eqref{eq:t-q}. Let us partition the intervals $[t_q,t_{q+1})$ by introducing
\begin{equation}\label{eq:defn_J_qi}
    s_q:=t_q + \frac{t_{q+1} - t_q}{3},\quad
    J_{q,1} :=  \left [ t_q, s_q \right ), \quad
    J_{q,2} := J_{q,1} + \frac{t_{q+1} - t_q}{3}.
\end{equation}

For $i =1,2,3$, let us take some smooth time cut-offs $\eta_{q, i} \in C^\infty_c (J_{q,i};[0, +\infty))$ satisfying
\begin{align}
\label{eq:eta-time-mollifier}
    \int_{J_{q,i}} \eta_{q,i} =1 \,, \quad \| \eta_{q,i} \|_{C^k(J_{q,i})} \lesssim_k (t_{q+1} - t_q)^{-k-1} \lesssim_k q^{2 (k+1)} \quad \forall k \in \N.
\end{align}

Let $\chi: \R \to [0, 1]$ be a standard mollifier and let  $ \chi_{\ell_{q+1}} (\cdot ) =\frac{1}{\ell_{q+1}}\chi(\frac{\cdot}{\ell_{q+1}})$ \footnote{If $f:\T \to \R$ and $\chi:\R \to [0,1]$ is a mollifier, one considers the $1$-periodic extension of $f$ to $\R$, defines $f \star \chi : \R \to \R$ in the classical sense, and notices that it is $1$-periodic.}.

With all the above preparations, we can define $u$ as follows.
For any $q \in \N$ and   $t \in [t_q, t_{q+1})$ we set
\begin{equation}\label{eq:def_u_half}
{u}(t, x_1, x_2):=\begin{cases}
    \tilde v_{q,1}(t,x_2):=\eta_{q,1}(t) \left( \chi_{\ell_{q+1}} \star v_{q+1,1}\right)(x_2) & \quad \text{ if }t \in J_{q,1}, \\
    \tilde v_{q,2}(t,x_1):=\eta_{q,2}(t) \left( \chi_{\ell_{q+1}} \star v_{q+1,2}\right) (x_1) & \quad \text{ if } t \in J_{q,2}, \\
    0 & \quad  \text{ if } t \in J_{q,3}.
\end{cases}
\end{equation}
Since $t_q \uparrow 1/2$, we have defined $u$ on $ [0,1/2) \times \T^2$. We extend it to $[1/2, 1] \times \T^2$ by
\begin{align}\label{eq:def_u}
    u(1/2, \cdot) \equiv 0, \qquad u (t,x) := - u(1-t, x) +  \sum_{q=0}^\infty \eta_{q,3} (1-t) v_{\text{swap}, q}(x) \quad\text{ for } t\in (1/2,1].
\end{align}

By construction, $u$ is H\"older continuous in space and smooth in time:

\begin{lemma} \label{lemma:regularity-holder}
The velocity field  defined in~\eqref{eq:def_u} satisfies
\begin{equation*}
    u \in C^{\infty}([0,1]; C^{\alpha}(\T^2))\quad \text{for any } 0< \alpha < \frac{1}{\gamma +(1-\gamma)(1+\delta)}.
\end{equation*}
In particular, for any $\alpha \in (0,1)$ and $\gamma \in (1/2,1)$ there exists $\delta \in (0,1)$ sufficiently small so that $u \in C^{\infty}([0,1]; C^{\alpha}(\T^2))$.
\end{lemma}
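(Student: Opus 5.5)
The plan is to estimate each building block of $u$ separately in $C^k_t C^\alpha_x$ and show that these norms decay super-exponentially in $q$. Since the time cut-offs $\eta_{q,i}$ have compact support in the pairwise disjoint intervals $J_{q,i}$, this decay also gives smoothness across the accumulation time $t=1/2$.

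\emph{Step 1 (smoothness away from $t=1/2$).} On $[0,1/2)$, any compact time interval meets only finitely many $J_{q,i}$. On each $J_{q,i}$, $u$ is a product $\eta_{q,i}(t)\,g(x)$, where $g$ is a spatially smooth function (a mollified shear), and $\eta_{q,i}\in C^\infty_c(J_{q,i})$. Hence $u\in C^\infty([0,1/2-\epsilon];C^\alpha(\T^2))$ for every $\epsilon>0$, with all time derivatives vanishing at the endpoints of each $J_{q,i}$. The map $t\mapsto 1-t$ carries this over to $(1/2,1]$. There the swap term $\eta_{q,3}(1-t)v_{{\rm swap},q}$ is smooth in time and constant in space, supported in $1-J_{q,3}$, and does not overlap the reflected shears.

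\emph{Step 2 (block estimates).} For $t\in J_{q,1}$ I use \eqref{eq:eta-time-mollifier} together with $\|\chi_{\ell_{q+1}}\star v_{q+1,1}\|_{C^0}\le a_q/2$ and $\|\chi_{\ell_{q+1}}\star v_{q+1,1}\|_{C^1}\lesssim a_q\ell_{q+1}^{-1}$. The $C^1$ bound holds because $\varphi'_{q+1}$ is a bounded step function and the derivative falls on the mollifier. Interpolation then gives
\begin{equation*}
\|\partial_t^k \tilde v_{q,1}\|_{C^0_tC^\alpha_x}\lesssim_k q^{2(k+1)}\, a_q\,\ell_{q+1}^{-\alpha}.
\end{equation*}
The vertical block satisfies the same bound with the better amplitude $a_{q+1}$ in place of $a_q$. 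The swap block contributes $q^{2(k+1)}a_{q+1}$. Inserting $\ell_{q+1}=a_{q+1}^\gamma a_{q+2}^{1-\gamma}$ and $a_{q+1}\approx a_q^{1+\delta}$ from \eqref{eq:a-q}, the dominant quantity is a power $a_q^{\,1-\alpha\Theta}$. Here $\Theta$ is a combination of $\gamma$, $1-\gamma$ and powers of $(1+\delta)$, up to constants coming from the integer rounding in \eqref{eq:a-q}. The exponent $1-\alpha\Theta$ is positive exactly when $\alpha$ lies below a threshold of the form $1/(\gamma+(1-\gamma)(1+\delta))$. I must be careful at this point about which index of $a$ pairs with which. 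My naive count gives an extra factor $(1+\delta)$ in $\Theta$, so I would recheck whether a finer estimate of $\chi_{\ell_{q+1}}\star\varphi'_{q+1}$ recovers the stated constant. In any case the sign of the exponent is the only thing that matters for the ``in particular'' claim, since $\Theta\to 1$ as $\delta\to 0$. Since $a_q$ decays super-exponentially, $q^{2(k+1)}a_q^{1-\alpha\Theta}\to 0$ faster than any polynomial in $q$.

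\emph{Step 3 (behaviour at $t=1/2$).} The $C^\alpha_x$ norm of $\partial^k_t u(t)$ for $t\in[t_q,1-t_q]$ is bounded by $\sup_{q'\ge q}$ of the block bounds from Step 2, and this tends to $0$ as $q\to\infty$. Together with the choice $u(1/2)=0$, this shows that every $\partial_t^k u$ extends continuously into $C^\alpha(\T^2)$ at $t=1/2$ with value $0$. By the mean value theorem, applied iteratively in $k$, $\partial_t^k u(1/2)$ exists and equals $0$. Hence $u\in C^\infty([0,1];C^\alpha(\T^2))$.

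The main obstacle is the exponent bookkeeping in Step 2: getting the sharp admissible range for $\alpha$ requires the precise interplay between the amplitude $a_q$, the oscillation scale $a_{q+1}$ and the mollification scale $\ell_{q+1}$. The time regularity is harmless, because the polynomial loss $q^{2(k+1)}$ is absorbed by the super-exponential decay of $a_q$.
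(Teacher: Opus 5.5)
Your approach is essentially the paper's. You bound each time block in $C^k_tC^\alpha_x$ by interpolating between its $C^0$ bound $\lesssim q^{2(k+1)}a_q$ and its gradient bound, and let the super-exponential decay of $a_q$ absorb the polynomial losses from $\partial_t^k\eta_{q,i}$. The paper treats $t=1/2$ by observing that $u=\sum_q(\tilde v_q+\tilde w_q)$ converges absolutely in $C^k([0,1];C^\alpha(\T^2))$, each summand being smooth with time support away from $1/2$. That is a more compact version of your Step 3, which is also fine.

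On the exponent, trust your naive count: no finer estimate will recover the stated constant.

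\textbf{Where the paper's threshold comes from.} The paper's proof reaches $\frac{1}{\gamma+(1-\gamma)(1+\delta)}$ by asserting that $\tilde v_q$ is mollified at scale $\ell_q$, i.e.\ $\|\nabla_x\tilde v_q\|\lesssim q^2a_q/\ell_q$. By \eqref{eq:def_u_half}, however, the block on $J_{q,1}$ is $\eta_{q,1}\,(\chi_{\ell_{q+1}}\star v_{q+1,1})$. The correct bound is therefore your $q^{2(k+1)}a_q\ell_{q+1}^{-\alpha}$.

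\textbf{This bound is sharp.} Since $\ell_{q+1}/a_{q+1}=(a_{q+2}/a_{q+1})^{1-\gamma}\ll 1$, the function $\chi_{\ell_{q+1}}\star\varphi'_{q+1}$ equals $+1$ and $-1$ at two points within distance $O(\ell_{q+1})$ on either side of a jump of $\varphi'_{q+1}$. Hence the H\"older quotient of $u(t,\cdot)$ there is $\gtrsim\eta_{q,1}(t)\,a_q\ell_{q+1}^{-\alpha}$, and $\max\eta_{q,1}\gtrsim q^2$. Moreover, \eqref{eq:a-q}--\eqref{eq:ell-q} give $a_{q+1}\le a_q^{1+\delta}$, hence $\ell_{q+1}\le a_q^{\Theta}$ with $\Theta:=(1+\delta)(\gamma+(1-\gamma)(1+\delta))$. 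Consequently $\sup_t\|u(t)\|_{C^\alpha_x}=\infty$ whenever $\alpha\ge 1/\Theta$.

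\textbf{What is actually true.} Conversely, the rounding in \eqref{eq:a-q} only costs constants, so $\ell_{q+1}\gtrsim a_q^{\Theta}$, and your argument gives $u\in C^\infty([0,1];C^\alpha(\T^2))$ precisely for $\alpha<1/\Theta$. The first claim of the lemma is thus off by the factor $(1+\delta)$. Since $\Theta\to1$ as $\delta\to0$, the ``in particular'' statement is unaffected, and it is the only form used later in the paper. Your proof establishes it. You should commit to the threshold $1/\Theta$ rather than leave it as something to recheck.
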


\begin{proof}
We need to show that $u \in C^{k}([0,1]; C^{\alpha}(\T^2))$ for all $k \in \N$.
Denote, for $t \in [t_q, t_{q+1}]$
$$
\tilde v_q(t, x) := \tilde v_{q,1}(t,x_2) + \tilde v_{q,2}(t,x_1) = \eta_{q,1}(t) \left( \chi_{\ell_{q+1}} \star v_{q+1,1}\right)(x_2) + \eta_{q,2}(t) \left( \chi_{\ell_{q+1}} \star v_{q+1,2}\right) (x_1) \,,
$$
and for $t \in [1-t_{q+1}, 1-t_{q}]$
$$
\tilde w_q(t, x) := -\tilde v_q(1-t, x) + \eta_{q,3} (1-t) v_{\text{swap}, q}(x).
$$
With these definitions,
$$
u(t, x) = \sum_{q=0}^{\infty} \tilde v_q(t, x) + \sum_{q=0}^{\infty} \tilde w_q(t, x)\quad \forall\,
t\in [0,1],\,x\in\T^2.
$$
In the following inequalities, just for this proof, we will write $\| \cdot\|_{L^\infty_t C^0_x}$ for the $L^{\infty}([t_q, t_{q+1}]; C^0_x)$-norm when dealing with $\tilde v_q$, as there will be no confusion; similarly for other norms involving time intervals.

From~\eqref{eq:norm-1},~\eqref{eq:norm-2} and~\eqref{eq:eta-time-mollifier} we have that
\begin{equation}\label{eq:estim_building_blocks}
\norm{\tilde v_q}_{L^{\infty}_t C^0_x} \lesssim q^2a_q;
\end{equation}
since $\tilde v_q$ is mollified with parameter $\ell_q$ we have
\[
\begin{aligned}
\norm{\nabla_x \tilde v_q}_{{L^{\infty}_t C^0_x}}  \lesssim q^2 \frac{a_q}{\ell_q}.
\end{aligned}
\]
By~\eqref{eq:eta-time-mollifier} and~\eqref{eq:t-q} we also have
\begin{align}
\norm{\partial_t^k \tilde v_q}_{{L^{\infty}_t C^0_x}} \leq  a_q \norm{\partial_t^k \eta_q}_{{L^{\infty}_t C^0_x}}
\lesssim a_q q^{2(k+1)}.
\end{align}
Arguing similarly, from~\eqref{eq:def_u_half} one sees that 
\begin{align}
\norm{\partial_t^k \nabla_x \tilde v_q}_{L^\infty_t C^0_x} \lesssim \frac{a_q^2}{\ell_q} q^{2(k+1)}.
\end{align}
For any smooth function $f:[a, b] \times \T^2 \to \R^2$, by interpolation estimates one has
\begin{align}
\norm{f}_{C^{k}_tC^{\alpha}_x}
\lesssim \norm{f}_{C^{k}_tC^{0}_x} + \norm{f}_{C^{k}_tC^{0}_x}^{1-\alpha} \norm{\nabla_x f}_{C^{k}_tC^{0}_x}^{\alpha}.
\end{align}
Using this and the facts that $a_q q^{2(k+1)}\lesssim_k a_q^{1/2} \leq 1$, $\ell_q \leq a_q$, we deduce that
\begin{align}
\norm{\tilde v_q}_{C^{k}_tC^{\alpha}_x}
& \lesssim a_q q^{2(k+1)} + \left( a_q q^{2(k+1)} \right)^{1-\alpha} \left( \frac{a_q^2}{\ell_q} q^{2(k+1)} \right)^{\alpha}\\
& \lesssim_k a_q^{1/2} + a_q^{1-\alpha}\left(\frac{a_q}{\ell_q}\right)^{\alpha}  \lesssim a_q^{1/2} + a_q^{1-\alpha\left( \gamma + (1-\gamma)(1+\delta) \right)}.
\end{align}
For the intervals of the type $[1-t_{q+1}, 1-t_q]$, all the previous estimates still hold, because the only difference between $\tilde v_q$ and $\tilde w_q$ is the swap term, which is spatially constant and satisfies
\[
\norm{\eta_{q,3} (1-t) v_{\text{swap}, q}}_{C^k_t C^0_x} \lesssim a_{q+1} q^{2(k+1)} \lesssim_k a_q^{1/2}.
\]
Hence
\begin{align} \label{eq:computation-holder-norm}
\norm{u}_{C^{k}_t C^\alpha_x}
\leq \sum\limits_{q \geq 0} \big(\norm{\tilde v_q}_{C^{k}_t C^\alpha_x} + \norm{\tilde w_q}_{C^{k}_t C^\alpha_x}\big)
\lesssim_k \sum\limits_{q \geq 0}  \big(a_q^{1/2} + a_q^{1-\alpha \left(\gamma+(1-\gamma)(1+ \delta) \right)}\big)
\end{align}
where the last quantity is finite if and only if $\alpha < [\gamma+(1-\gamma)(1+ \delta)]^{-1}$. In this case, as the series is absolutely convergent, we deduce that $u \in C^{k}([0, 1]; C^{\alpha}(\T^2))$ for any $k \geq 1$ as desired.

The last claim follows from the fact that, for any fixed $\gamma\in (1/2,1)$, one has
\begin{equation*}
    \lim_{\delta \downarrow 0} \frac{1}{\gamma+(1-\gamma)(1+ \delta)} = 1.\qedhere
\end{equation*}
\end{proof}

Henceforth we choose $\delta >0$ sufficiently small so that $u$ is $\alpha$-H\"older continuous, for given $\alpha\in (0,1)$ as in the statement of Theorem~\ref{th:main_theorem}.

\subsection{Properties of the Lagrangian flow}\label{subsec:properties_lagrangian_flow}

We need to analyze how chessboards are mapped by the flow associated to $u$. To this end, we define the horizontal lines where $v_{q, 1} $ is discontinuous by
\[
H_q:= \bigg\{ (x_1, x_2) \in \T^2: \lfloor x_2 \rfloor \in \frac{1}{a_q} \N\bigg\}.
\]
Similarly, the vertical lines where $v_{q,2}$ is discontinuous are given by
\[
V_q:= \bigg\{ (x_1, x_2) \in \T^2: \lfloor x_1 \rfloor \in \frac{1}{a_q} \N\bigg\}.
\]
The horizontal (resp. vertical) sets that are quantitatively distant from the horizontal (resp. vertical) lines above are given by
\[
\mathcal{H}_q:= \bigg\{x \in \T^2 : \text{dist}\left(x, H_q\right) > 2 \ell_q \bigg\},\quad
\mathcal{V}_q:= \bigg\{x \in \T^2 : \text{dist}\left(x, V_q\right) > 2\ell_q\bigg\}.
\]

The vector field $u:[0, 1/2) \times \T^2 \to \R^2$ defined in equation~\eqref{eq:def_u} satisfies the following properties. Let $1 \leq p \leq q$ be fixed, then:
\begin{equation}
\label{eq:prop_u_hor}
\left| x-y \right| \leq \ell_{q} \text{ and } y \in \mathcal{H}_{p} \implies u(t, x) = u(t, y) \quad \forall\, t \in J_{p-1,1},
\end{equation}
\begin{equation}
\label{eq:prop_u_ver}
\left| x-y \right| \leq \ell_{q} \text{ and } y \in \mathcal{V}_{p} \implies u(t, x) = u(t, y) \quad \forall\, t \in J_{p-1,2},
\end{equation}
\begin{equation}
\label{eq:prop_u_zero}
u(s, x) = u(s, y), \quad \forall\, t \in J_{p-1,3}\,, \quad \forall\, x, y \in \T^2.
\end{equation}

Let us prove~\eqref{eq:prop_u_hor}, the others being similar.
Note that, for any $ t \in J_{p-1,1}$, $u(t, \cdot )$ is obtained by a space mollification of a locally constant velocity field $\tilde v_{q,i}$, via a a convolution kernel of parameter $\ell_{p}$; therefore, $u(t, \cdot )$ is constant on each connected component of the set $\tilde{\mathcal{H}}_q:=\{ z : \text{dist}(z, H_{p}) > \ell_{p} \}$.
By the hypothesis and triangular inequality, one has
\[
\begin{aligned}
\text{dist}(x, H_{p}) \geq \text{dist}(y, H_{p}) - \big|x-y\big| > 2\ell_{p} - \ell_{q} \geq \ell_{p}
\end{aligned}
\]
so that $x\in \tilde{\mathcal{H}}_q$; since $|x-y|\leq \ell_q$, $x$ and $y$ belong to the same connected component of $\tilde{\mathcal{H}}_q$, giving the conclusion.

From now on, we will denote by $X^q_t$ the deterministic Lagrangian flow associated to 
\begin{equation}\label{eq:defn_uq}
    u_q (t,x) :=u (t,x) \cdot \one_{[t_q, 1-t_q]^c} (t);
\end{equation}
note that $u_q$ is a smooth velocity field, by the choice of the compactly supported time cut-offs~\eqref{eq:eta-time-mollifier}.
Since $u_q$ is divergence-free in space, the associated flow $X^q$ is measure preserving.

We define the set $A \subset \T^2$ in the statement of Theorem~\ref{th:main_theorem} as
\begin{align} \label{d:A}
    A := A^{(1)} 
\end{align}
where for any $j \in \N$ we define
\begin{equation} \label{d:A-j}
    A^{(j)} = A_1^{(j)} \cap A_2^{(j)} \cap A_3^{(j)} \cap A_4^{(j)},
\end{equation}
$$ $$
with 
\begin{align*}
    A_1^{(j)} & := \bigcap\limits_{q =j}^\infty \left(X_{t_{q-1}}^{q}\right)^{-1}(\mathcal{H}_{q}),\qquad\qquad\qquad \ \,
    A_2^{(j)} := \bigcap\limits_{q =j}^\infty \left(X_{t_{q-1} + (t_{q} - t_{q-1})/3}^{q}\right)^{-1}(\mathcal{V}_{q}),\\
    A_3^{(j)} & := \bigcap\limits_{q =j}^\infty \bigcap\limits_{1 \leq p  \leq q} \left(X_{1-t_{p}}^{q} \right)^{-1}(\mathcal{V}_{p}),\qquad\quad
    A_4^{(j)} :=  \bigcap\limits_{q =j}^\infty \bigcap\limits_{1\leq p  \leq q} \left(X_{1-t_p + 2(t_{p} - t_{p-1})/3}^{q} \right)^{-1}(\mathcal{H}_p).
\end{align*}

The Lebesgue measure of $A^{(j)}$ is indeed large for any $j \in \N$.

\begin{lemma}\label{lem:lebesgue_measure_A}
    Let $A^{(j)}$ and $A$ be defined by~\eqref{d:A} and \eqref{d:A-j}.
    Under Convention~\ref{convention_parameters}, there exists a constant $C>0$ independent of the choice of parameters such that
    \begin{equation*}
        |A^{(j)}|\geq 1-C a_j^{\delta (1- \gamma)} \quad \forall j\in\N.
    \end{equation*}
    In particular, for given $\eps >0$ as in Theorem~\ref{th:main_theorem}, we can choose $a_0 \in (0,1)$ small enough so that additionally
    $$ |A| \geq 1 - \eps.$$
\end{lemma}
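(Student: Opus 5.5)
The plan is a union bound over complements, using that each flow $X^q_t$ preserves Lebesgue measure because $u_q$ is smooth and divergence-free. For any measurable $B\subset\T^2$ we then have $|(X^q_t)^{-1}(B^c)|=|B^c|$. It therefore suffices to bound $|\mathcal{H}_p^c|$ and $|\mathcal{V}_p^c|$ and to sum over the indices appearing in the definitions of $A^{(j)}_1,\dots,A^{(j)}_4$. By the definition via $\lfloor\cdot\rfloor$, the set $H_p$ should be read as the union of horizontal lines $\{x_2\in a_p\Z\}$ where $\varphi_p'$ jumps. There are about $a_p^{-1}$ such lines, each with a $2\ell_p$-neighbourhood of area $4\ell_p$. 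Hence
\begin{equation*}
    |\mathcal{H}_p^c| + |\mathcal{V}_p^c| \lesssim \frac{\ell_p}{a_p} = \Big(\frac{a_{p+1}}{a_p}\Big)^{1-\gamma}\lesssim a_p^{\delta(1-\gamma)},
\end{equation*}
using \eqref{eq:ell-q} and \eqref{eq:a-q}. If the discontinuities of $\varphi_{p}'$ actually lie on the finer grid $a_{p}$ rather than $a_{p-1}$, one should recheck which index enters. Either way, the ratio between mollification scale and line spacing is of the form $(a_{p+1}/a_p)^{1-\gamma}$ up to a shift of index. Under Convention~\ref{convention_parameters}, such a shift is absorbed.

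For $A^{(j)}_1$ and $A^{(j)}_2$ the union bound gives
\begin{equation*}
    |(A^{(j)}_1)^c|+|(A^{(j)}_2)^c|\lesssim \sum_{q\ge j} a_q^{\delta(1-\gamma)}\le 4a_j^{\delta(1-\gamma)}
\end{equation*}
by \eqref{eq:a-0}. The obstacle is $A^{(j)}_3$ and $A^{(j)}_4$, which are double intersections over $q\ge j$ and $1\le p\le q$. A naive union bound gives $\sum_q\sum_{p\le q}a_p^{\delta(1-\gamma)}$, which diverges. To overcome this, I exploit that for $p<q$ the flows $X^q$ and $X^{p}$ agree up to the relevant times. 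Indeed $u_q=u_p$ outside $[t_p,1-t_p]$, and $u_q$ differs from $u_p$ only on $[t_p,t_q]\cup[1-t_q,1-t_p]$. The key observation I aim for is that the backward reflection undoes the mixing. Since $u(t)=-u(1-t)$ apart from the spatially constant swap terms, the flow of $u_q$ from $t_p$ to $1-t_p$ is the identity composed with a translation by $\sum_{p\le r<q}a_{r+1}e_1$. Reversing in time, the flow of $-u$ on $[1-t_q,1-t_p]$ inverts the flow of $u$ on $[t_p,t_q]$, and the swap translations commute with the horizontal shears but not with the vertical ones.

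The commutation failure with vertical shears is exactly where care is needed. I would instead show that the flows agree on $\mathcal{V}_r$-good sets, or simply that the translation $\tau=\sum_{r\ge p}a_{r+1}$ is at most $2a_{p+1}$. Taking the swap inside block $q$ at time $J_{q,3}$ in reversed time, it acts between the unmixing of levels $q$ and $q-1$. It moves points only horizontally by $a_{q+1}$, which is a multiple of the period $2a_{q+1}$ of $\varphi_{q+1}'$ up to half. This is the delicate point, and I would check it explicitly on chessboards. If it works, $X^q_{1-t_p}(x)$ equals $X^{p}_{1-t_p}(x)$ up to a horizontal shift that is independent of $x$ on a full-measure set, for $p$ fixed. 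The condition "$\in\mathcal{V}_p$" then depends only on $p$ up to this deterministic translation. Because $\mathcal{V}_p$ is invariant under horizontal translations by multiples of $a_p$, the condition is independent of $q$ provided the shift is such a multiple.

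Granting this, the double intersections collapse to single intersections over $p\ge1$, or $p\ge j$ after also handling small $p$ by the same reduction. Their complements are then bounded by $\sum_p a_p^{\delta(1-\gamma)}$ via measure preservation. Collecting the four pieces gives
\begin{equation*}
    |(A^{(j)})^c|\le C a_j^{\delta(1-\gamma)}
\end{equation*}
with $C$ independent of the parameters. Finally, $|A|\ge 1-Ca_0^{\delta(1-\gamma)}\ge1-\eps$ for $a_0$ small.

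If the $p<j$ terms do not reduce, I would note that for $A=A^{(1)}$ only the final smallness in $a_0$ is needed. That sum is $\sum_{p\ge1}a_p^{\delta(1-\gamma)}\le 4a_0^{\delta(1-\gamma)}$, which still yields the claim.
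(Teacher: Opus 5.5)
Your reduction to complements, the use of Lebesgue-measure preservation of $X^q$, the bound $|\mathcal H_p^c|+|\mathcal V_p^c|\lesssim \ell_p/a_p\le a_p^{\delta(1-\gamma)}$, and the treatment of $A^{(j)}_1,A^{(j)}_2$ via \eqref{eq:a-0} all coincide with the paper. You also correctly see that the naive union bound for $A^{(j)}_3,A^{(j)}_4$ diverges.

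The gap is the step that should collapse $\bigcup_{q\ge p}(X^q_{1-t_p})^{-1}(\mathcal V_p^c)$ to a single set. You never prove it (the argument ends with ``granting this''), and the mechanism you propose cannot work as stated, for two reasons.
First, $X^q_{t_p,1-t_p}$ is not a translation independent of the point. As you note yourself, the swaps do not commute with the shears, and conjugating them through the shears gives point-dependent displacements. This is precisely the parity mechanism behind \eqref{eq:A_e}--\eqref{eq:A_o}.
Second, even granting a deterministic shift, your candidate $\tau=\sum_{r\ge p}a_{r+1}\in(0,2a_{p+1}]$ is not a multiple of $a_p$. So invariance of $\mathcal V_p$ under horizontal $a_p$-translations gives nothing: points of $\mathcal V_p$ within $\tau$ of its boundary can leave it, and the condition can depend on $q$.

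The missing idea, which is how the paper proceeds, is to give up exact $q$-independence and absorb an $O(a_{p+1})$ discrepancy into a margin. Set $\tilde{\mathcal V}_p:=\{x:\dist(x,\mathcal V_p^c)\ge 10a_{p+1}\}$ and show $\bigcup_{q\ge p}(X^q_{1-t_p})^{-1}(\mathcal V_p^c)\subset (X^p_{1-t_p})^{-1}(\tilde{\mathcal V}_p^c)$. By measure preservation the whole union over $q$ then costs only $|\tilde{\mathcal V}_p^c|\lesssim a_p^{-1}(\ell_p+a_{p+1})\lesssim a_p^{-1}\ell_p$, since $a_{p+1}\le\ell_p$. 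This sums over $p\ge j$ to $O(a_j^{\delta(1-\gamma)})$.

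The paper derives the margin from $\|u_q\|_{L^1(\mathcal I_p;L^\infty_x)}\le 10a_{p+1}$, on the window $\mathcal I_p$ where the reflection symmetry breaks. That window carries the swap $a_{p+1}$ plus superexponentially smaller deeper levels. Note, though, that $[t_p,1-t_p]$ also contains $J_{p,1}\cup J_{p,2}$ and their reflections. Conjugation by these level-$p$ shears turns the horizontal shift $a_{p+1}$ into a jump of size $a_p$: for generic $z$ one finds $X^{p+1}_{t_p,1-t_p}(z)=z+(\pm a_p+a_{p+1},\pm a_{p+1})$. So a complete proof of the containment needs both ingredients, after discarding small, summable exceptional sets near the shear interfaces:
your invariance of $\mathcal V_p$ under point-dependent horizontal shifts in $a_p\Z$, and the $O(a_{p+1})$ margin.

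Finally, your worry about $p<j$ is justified. As literally defined, $(A^{(j)}_3)^c\supset (X^j_{1-t_1})^{-1}(\mathcal V_1^c)$, whose measure $|\mathcal V_1^c|$ does not depend on $j$. So no reduction can give $Ca_j^{\delta(1-\gamma)}$, and your hedge ``handling small $p$ by the same reduction'' cannot succeed. The paper's rewriting $\bigcup_{q\ge j}\bigcup_{p\le q}=\bigcup_{p\ge j}\bigcup_{q\ge p}$ tacitly restricts to $p\ge j$. For $A=A^{(1)}$ this is harmless, as your fallback observes.
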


\begin{proof}
    Clearly $|A^{(j)}| \geq 1 - \sum_{i=1}^4 | (A_i^{(j)})^c|$, so it suffices to prove the upper bound
    \begin{align*}
        |(A_i^{(j)})^c| \lesssim a_j^{\delta(1-\gamma)}\quad \forall\, j\in\N,\ i=1,\ldots,4.
    \end{align*}
Using the Lebesgue measure preserving property of $X^q$, we deduce that
$$ |(A_1^{(j)})^c| \leq \sum_{q=j}^\infty |\mathcal{H}_q^c| \leq 4 \sum_{q=j}^\infty a_q^{-1} \ell_q = 4 \sum_{q=j}^\infty a_q^{\delta(1-\gamma)} \leq 8 a_j^{\delta (1-\gamma)},$$
where the second to last inequality holds true thanks to~\eqref{eq:a-q}-\eqref{eq:ell-q}, and the last one thanks to~\eqref{eq:a-0}. The same estimate holds for $(A_2^{(j)})^c$. We now estimate $(A_3^{(j)})^c$, the argument for $(A_4^{(j)})^c$ being similar. It holds that
\[
\left( \bigcap\limits_{q \geq j} \bigcap\limits_{p  \leq q} \left(X_{1-t_p}^{q} \right)^{-1}(\mathcal{V}_p)\right)^c = \bigcup\limits_{q \geq j} \bigcup\limits_{p  \leq q} \left(X_{1-t_p}^{q} \right)^{-1}(\mathcal{V}_p^c) = \bigcup\limits_{p \geq j} \bigcup\limits_{q \geq p } \left(X_{1-t_p}^{q} \right)^{-1}(\mathcal{V}_p^c).
\]

If in the construction of the  vector field $u$ there were no swapping, we would have 
$$\bigcup\limits_{q \geq p } \left(X_{1-t_p}^{q} \right)^{-1}(\mathcal{V}_p^c) = \left(X_{1-t_p}^{p} \right)^{-1}(\mathcal{V}_p^c),$$ 
because of the symmetry $u(t,x )= - u(1-t, x)$.
However, because of the swapping of the vector field for $t \geq \frac12$, we need to define a restricted subset of $\mathcal V_p$ of size $10a_{p+1}$ (see the definition of $\tilde{\mathcal{V}}_p$ below) that provides an upper bound on the displacement of the trajectories induced by the velocity field $u_q$ on the time interval
\[
\mathcal I_p := \left[t_p + \frac{2(t_{p+1}-t_p)}{3},\, 1-t_p-\frac{2(t_{p+1}-t_p)}{3}\right],
\]
where the symmetry $u(t,x) = -u(1-t,x)$ breaks down.

We claim that, up to possibly choosing $a_0$ small enough, it holds
\begin{equation}\label{eq:claim_L1_smallness}
    \| u_q \|_{L^1 (\mathcal{I}_p ; L^\infty_x)} \leq 10 a_{p+1}.
\end{equation}
To see this, set $\tilde J_{q,i}:=1-J_{q,i}$; notice that $\mathcal I_p$ consists of the interval $J_{p,3}$ on which $u\equiv 0$, its simmetrization $\tilde J_{p,3}$ where $u$ acts in space as $v_{swap,p}$, and intervals $J_{q,i}$ or $\tilde J_{q,i}$ with $q\geq p+1$. Using the fact that $\| \eta_{q,i}\|_{L^1([0,1])}=1$ by~\eqref{eq:eta-time-mollifier}, as well as estimates~\eqref{eq:norm-1}-\eqref{eq:norm-2}, we obtain
\begin{align*}
    \| u_q \|_{L^1 (\mathcal{I}_p ; L^\infty_x)}
    & \leq \| u\|_{L^1(J_{p,3};L^\infty_x)} + \| u\|_{L^1(\tilde J_{p,3};L^\infty_x)} + \sum_{i\in {1,2,3}, q\geq p+1} (\| u\|_{L^1(J_{q,i};L^\infty_x)}+ \| u\|_{L^1(J_{q,i};L^\infty_x)})\\
    & \leq a_{p+1} + 2 \sum_{q\geq p+1} (a_q+ 2a_{q+1}) \leq 10 a_{p+1}
\end{align*}
where the last inequality follows from the superexponential decay of $(a_q)_q$, for $a_0>0$ small enough.
As a consequence of~\eqref{eq:claim_L1_smallness}, setting $\tilde{\mathcal V}_p := \{x: \text{dist} (x, \mathcal V_p^c)\geq  10 a_{p+1} \}$, one has $\tilde{\mathcal V}_p \subset \mathcal V_p$
and therefore
$$\bigcup\limits_{q \geq p } \left(X_{1-t_p}^{q} \right)^{-1}(\mathcal{V}_{p}^c) \subset \left(X_{1-t_p}^{p} \right)^{-1}( \tilde{\mathcal{ V}}_p^c) \,. $$
Therefore, using again \eqref{eq:a-q}, \eqref{eq:ell-q} and \eqref{eq:a-0}, we obtain that
\begin{align*}
    |(A_3^{(j)})^c|
    & \leq \sum\limits_{p \geq j} \left|\bigcup\limits_{q \geq p } \left(X_{1-t_p}^{q} \right)^{-1}(\mathcal{V}_p^c)\right|
    \leq  \sum\limits_{p \geq j} \left| \left(X_{1-t_{p}}^{p} \right)^{-1}(\tilde{\mathcal{V}}_p^c)\right|
    = \sum\limits_{p =j}^\infty \left|\tilde{\mathcal{V}}_p^c\right|\\
    & \leq 10 \sum_{p=j}^\infty a_p^{-1} (2\ell_p + a_{p+1})
    \leq 30 \sum_{p=j}^\infty a_p^{-1} \ell_p   \leq 60 a_j^{\delta(1-\gamma)} \,. \qedhere
\end{align*}
\end{proof}

We finally collect some fundamental properties satisfied by the Lagrangian flows $X_t^q$. For any $1 \leq p \leq q -1$, we have 
\begin{equation}
\label{eq:prop_X_1}
X_{t_{p-1} }^q (x) \in \mathcal{H}_{p} \implies X_{s}^q (x) \in \mathcal{H}_p \quad \forall\, s \in J_{p-1,1}\,,
\end{equation}
\begin{equation}
\label{eq:prop_X_2}
X_{s_{p-1}}^q (x)  \in \mathcal{V}_p \implies X_{s}^q(x) \in \mathcal{V}_p \quad \forall\, s \in J_{p-1,2} \,,
\end{equation}
where we recall \eqref{eq:t-q}, \eqref{eq:defn_J_qi}.

Recall the definition of $\mathcal{C}^{\mathrm{even}}_{a_0}$ (resp. $\mathcal{C}^{\mathrm{odd}}_{a_0}$), the even (resp. odd) chessboard of size $a_0$ given in~\eqref{eq:defn_chessboards}.
Let us define the restricted chessboards
\begin{equation}\label{eq:restricted:chessboards}
    F_e :=  \mathcal{C}^{\mathrm{even}}_{a_0} \cap \mathcal{V}_1 \cap \mathcal{H}_1, \quad
F_o =  \mathcal{C}^{\mathrm{odd}}_{a_0} \cap \mathcal{V}_1 \cap \mathcal{H}_1,
\end{equation}
see Figure~\ref{fig:placeholder}.
By their definition, we notice that
\begin{equation}
\label{eq:A_e_separated_A_o}
\text{dist}(F_e, F_o) \geq 2\ell_1.
\end{equation}
By  the definition of $u_q$ in~\eqref{eq:defn_uq} and that of $A$ in~\eqref{d:A}, it holds $X_1^q (A) \subset \mathcal{V}_1 \cap \mathcal{H}_1$ for every $q\in\N$; due to the use of the swap velocity field $v_{\text{swap},q}$ in~\eqref{eq:def_u}, it follows that 
\begin{equation}
    \label{eq:A_e}
\left\{\begin{aligned}
    X^{2q}_1(A \cap \mathcal{C}^{\mathrm{even}}_{a_0}) \subset F_e\,, \\
    X^{2q}_1(A \cap \mathcal{C}^{\mathrm{odd}}_{a_0}) \subset F_o\,,
\end{aligned}
\right.
\end{equation}
and
\begin{equation}
    \label{eq:A_o}
\left\{\begin{aligned}
    X^{2q+1}_1(A \cap \mathcal{C}^{\mathrm{even}}_{a_0}) \subset  F_o\,, \\
    X^{2q+1}_1(A\cap \mathcal{C}^{\mathrm{odd}}_{a_0}) \subset F_e\,.
\end{aligned}
\right.
\end{equation}

\begin{figure}
    \centering
    \includegraphics[width=0.85\linewidth]{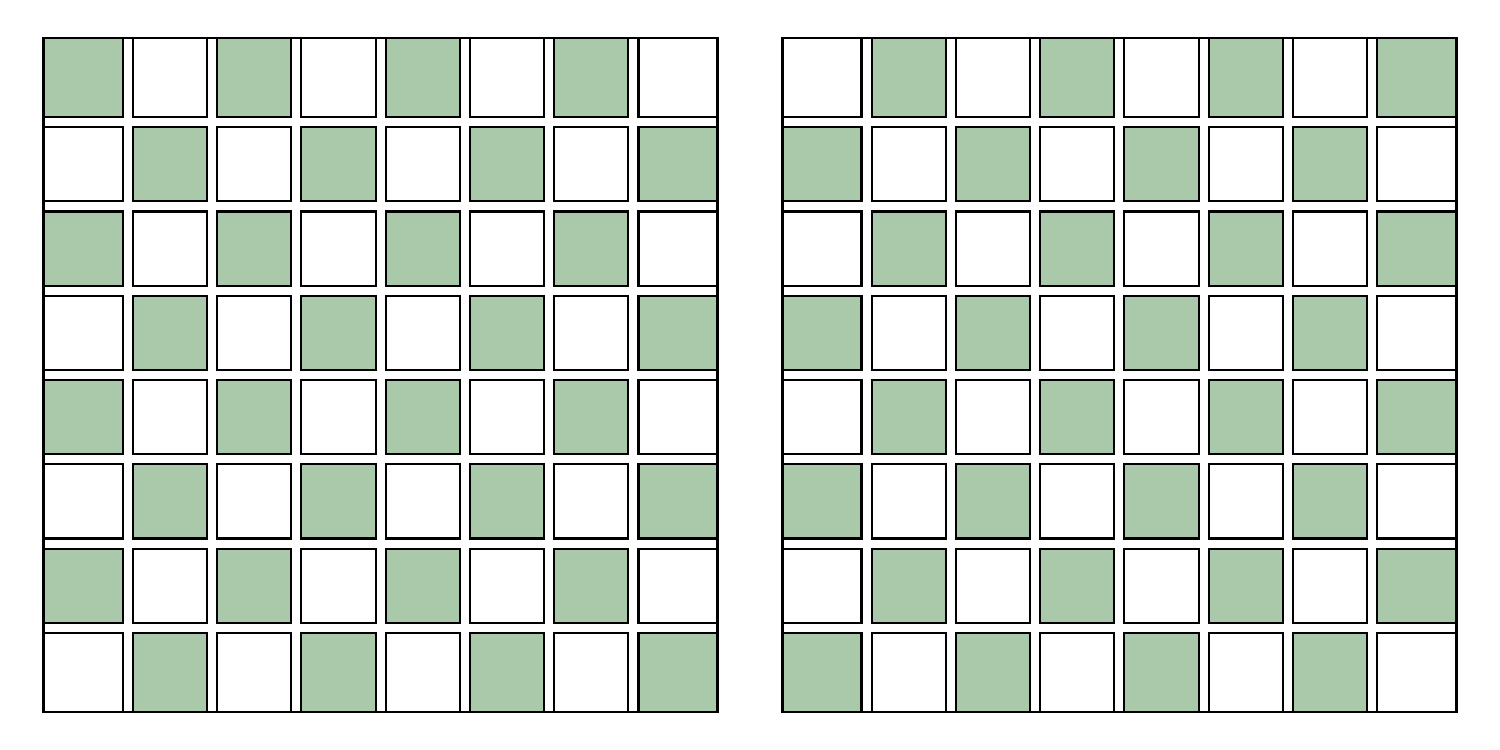}
    \caption{On the left, $F_e$ is shown in green. On the right, $F_o$ is shown in green.}
    \label{fig:placeholder}
\end{figure}

\section{Quantitative stability between flows under smooth advection}\label{sec:stability}

The proof of Theorem~\ref{th:main_theorem} relies on a quantitative comparison between the stochastic flow $X^{\kappa_q}$, associated with the velocity field $u$ and noise intensity $\kappa_q$, and the deterministic flow $X^q$, associated with $u_q$ as defined in~\eqref{eq:defn_uq}.

In this section, we establish some preliminary estimates in this direction.
They concern time intervals of the form $[0,t_q]$ (or their reflections) and perturbations with noise intensity $\kappa_q$; in this regime, we can regard the velocity field $u$ as being smooth, in the sense that it is active only at length scales much larger than $\kappa_q$ (equivalently, in Fourier space it is concentrated on frequencies $|k|\ll \kappa_q^{-1}$).

As the estimate in consideration will be essentially pathwise in nature, we state in a slightly more general framework. 
Let $\xi:[0,1] \to \R^2$ be a (bounded) càdlàg function, $y\in \T^2$ and let $X^{\kappa}(y)$ be {\em any} solution to the perturbed ODE\footnote{Since $u$ is continuous, is is possible to construct such a solution $X^\kappa(y)$ for any $y\in \T^2$ and any $\kappa\in\R$, by a Peano-type compactness argument; $X^\kappa(y)$ might not be unique, but the statement of Proposition~\ref{prop:up_to_tq} applies to any solution.}
\begin{equation}
\label{eq:pathwise-sde}
X^{\kappa}_t(y) = X^{\kappa}_0(y) + \int_0^t u(s, X^{\kappa}_s(y)) + \kappa \xi_t \quad\forall\, t\in [0,1].
\end{equation}
Let us denote from now on:
$$\norm{\xi}_{\infty}:=\sup\limits_{t \in [0, 1] } |\xi_t|.$$

\begin{proposition}
\label{prop:up_to_tq}
Let $a_0>0$ and $\delta>0$ be chosen sufficiently small and let $(\ell_q)_{q \in \N}$, $(\kappa_q)_{q \in \N}$, $(t_q)_{q \in \N}$ be defined as in Section~\ref{subsec:parameters}; let $u$ be the velocity field constructed in Section~\ref{subsec:construction_velocity} and $(u_q)_{q\in\N}$ be given by~\eqref{eq:defn_uq}. Let $A^{(j)}\subset \T^2$ be the set defined in~\eqref{d:A-j} for any $j \in \N$.

Then, for any $q\in \N$ there exists $\bar{q}= \bar{q} (q) \to \infty$ as $q \to \infty$, such that the following property holds true for any $q\in\N$. If $X^{\kappa_q}(y)$ is a solution to~\eqref{eq:pathwise-sde} and $\|\xi\|_\infty \leq q$, then for all $x\in A^{(\bar{q})}$ it holds that
\begin{equation}
\label{eq:prop_step1}
\sup_{t\in [0,t_q]} \left|X_t^{\kappa_q}(y) - X_t^q (x) \right| \leq 2 q^2 \kappa_q \norm{\xi}_{\infty} + \frac{\ell_q}{2q}\quad \text{for all $y$ such that}\quad |y-x| < q\kappa_q \norm{\xi}_{\infty} + \frac{\ell_q}{2q^2} \,,
\end{equation}
where $X_t^q$  denotes the Lagrangian flow  associated with the smooth vector field $u_q$.
\end{proposition}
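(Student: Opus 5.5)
We propose an induction over the blocks $[t_{p-1},t_p]$, $p=1,\dots,q$, tracking the discrepancy
\[
e_t:=|X^{\kappa_q}_t(y)-X^q_t(x)|.
\]
The key observation is that on $[0,t_q]$ we have $u_q=u$. Moreover, on each subinterval $J_{p-1,i}$ the field $u(t,\cdot)$ is a mollified shear which, by \eqref{eq:prop_u_hor}--\eqref{eq:prop_u_zero}, is \emph{locally constant} in an $\ell_q$-neighbourhood of any point of $\mathcal H_p$ (resp.\ $\mathcal V_p$).

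We take $\bar q(q)$ to be the largest integer such that the following comparison is valid for all $p\le q$ with $p\geq \bar q$. That is, $\bar q$ is chosen so that the early blocks $p<\bar q$, where $\ell_p\gg\ell_q$ but the set $A^{(\bar q)}$ gives no information, still contribute only a small Lipschitz amplification. Concretely, pick $\bar q\to\infty$ slowly enough that
\[
\exp\Big(\int_0^{t_{\bar q}}\|\nabla u\|_{L^\infty_x}\,\dd s\Big)\lesssim \exp\Big(C\sum_{p\le\bar q} p^2 a_p/\ell_p\Big)\le q .
\]
This is possible since the left side is a fixed finite number for each fixed $\bar q$. On $[0,t_{\bar q}]$ we then use plain Grönwall: writing
\[
X^{\kappa_q}_t-X^q_t=y-x+\int_0^t\big(u(X^{\kappa_q})-u(X^q)\big)+\kappa_q\xi_t,
\]
we obtain $e_t\le q\big(|y-x|+\kappa_q\|\xi\|_\infty\big)$. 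This is where one factor of $q$ in \eqref{eq:prop_step1} is spent.

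For the blocks $p\in(\bar q,q]$, we use $x\in A^{(\bar q)}\subset A_1^{(\bar q)}\cap A_2^{(\bar q)}$. The trajectory $X^q(x)$ sits in $\mathcal H_p$ at time $t_{p-1}$ and remains there throughout $J_{p-1,1}$ by \eqref{eq:prop_X_1}; similarly it remains in $\mathcal V_p$ on $J_{p-1,2}$ by \eqref{eq:prop_X_2}. On $J_{p-1,3}$ we have $u\equiv0$. Hence, as long as $e_s\le\ell_q$, property \eqref{eq:prop_u_hor} (resp.\ \eqref{eq:prop_u_ver}) gives $u(s,X^{\kappa_q}_s)=u(s,X^q_s)$, so the drift contributions cancel exactly. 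On the whole interval $[t_{\bar q},t_q]$ one therefore gets
\[
e_t\le e_{t_{\bar q}}+\kappa_q|\xi_t-\xi_{t_{\bar q}}|\le e_{t_{\bar q}}+2\kappa_q\|\xi\|_\infty,
\]
with no amplification. A continuity/bootstrap argument closes the loop: let $\tau$ be the first time $e$ exceeds $\ell_q$. Up to $\tau$, the bound above together with the assumption on $|y-x|$ gives
\[
e\le q^2\kappa_q\|\xi\|_\infty+\frac{\ell_q}{2q}+2\kappa_q\|\xi\|_\infty\le 2q^2\kappa_q\|\xi\|_\infty+\frac{\ell_q}{2q}<\ell_q,
\]
using $\|\xi\|_\infty\le q$ and \eqref{eq:kappa-q-ell-q}, i.e.\ $10q^4\kappa_q\le\ell_q$. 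Thus $\tau>t_q$. Since $\xi$ is càdlàg, the bootstrap must be phrased with $\sup_{s\le t}e_s$ and a right-continuity argument; this is routine.

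The main obstacle is the first phase: making the Grönwall factor on $[0,t_{\bar q}]$ at most $q$ while still having $\bar q\to\infty$, and checking that the intermediate bound $e\le\ell_q$ on $[0,t_{\bar q}]$ also holds there. The latter requires $q\cdot\frac{\ell_q}{2q^2}\le\frac{\ell_q}{2q}$ and $q^2\kappa_q\|\xi\|\le q^3\kappa_q\ll\ell_q$, both of which are fine. One also has to verify that the transition at $t_{\bar q}$ is consistent with the membership condition for $p=\bar q$ (the intersections in $A^{(j)}$ start at $q=j$, matching blocks $p\ge\bar q$). If the exponential of $\sum_{p\le\bar q}p^2a_p/\ell_p$ grows too fast, we simply let $\bar q$ grow like an inverse of this function of $q$, which still diverges.
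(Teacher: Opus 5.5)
Your proposal is correct and follows essentially the paper's proof. It uses the same choice of $\bar q$ via $\exp\big(\int_0^{t_{\bar q}}\|\nabla u\|_{C^0_x}\big)\le q$, Gr\"onwall on $[0,t_{\bar q}]$, exact cancellation of the drifts on $[t_{\bar q},t_q]$ from \eqref{eq:prop_u_hor}--\eqref{eq:prop_u_zero} together with membership in $A^{(\bar q)}$, and a first-exit-time contradiction. The only difference is that you run a single bootstrap over all of $[t_{\bar q},t_q]$, so the noise increment enters once, whereas the paper inducts block by block and accumulates $3\kappa_q\|\xi\|_\infty$ per block. Also note that your Gr\"onwall bound at $t_{\bar q}$ carries an extra $q\kappa_q\|\xi\|_\infty$ that you dropped; this is harmless for $q\ge 2$.
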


\begin{proof}

For any $q \in \N$ we define $\bar{q} \in \N$
$$\bar{q} =\max \left \{ j\leq q :  \exp \left ( \int_0^{t_j} \| \nabla u (s)\|_{C^0_x} \dd s \right ) \leq q \right \}.$$
It is clear that $\bar{q} \to \infty$ as $q\to \infty$.

\textbf{STEP 0:} We have that $\bar{q} \leq q$, hence we can apply Gr\"onwall to $X_t^{\kappa_q} - X_t^q$ to get
$$ \sup_{t\in [0,t_{\bar q}]}\left|X_t^{\kappa_q}(y) - X_t^q (x) \right| \leq (|x-y|  + \kappa_q \| \xi \|_{\infty}) \exp \left ( \int_0^{t_{\bar q}} \| \nabla u (s)\|_{C^0_x} \dd s \right ) <  (q^2+q) \kappa_q \| \xi \|_{\infty} + \frac{\ell_q}{2 q} \,. $$

We now argue inductively; 
we will prove that for all $1\leq p \leq q - \bar{q}$, for all $x \in A^{(\bar q)}$, and for all $t \in [t_{\bar{q}}, t_{p+ \bar{q}}]$, it holds that

\begin{equation}\label{eq:up_to_tq_proof_claim}
\left|X_t^{\kappa_q}(y) - X_t^q (x) \right| < 3 p \kappa_q \norm{\xi}_{\infty} +(q^2+q) \kappa_q \| \xi \|_{\infty} + \frac{\ell_q}{2 q}\quad \text{for all $y$ s.t. } |y-x| <  q \kappa_q \, \norm{\xi}_{\infty} + \frac{\ell_q}{2 q^2} \,.
\end{equation}
Choosing $p=q-\bar q$ will then give the desired conclusion.

\textbf{STEP 1:} We treat the case $p=1$; therefore we may assume $\bar q<q$. Suppose by contradiction that the above claim does not hold; then there exist $x\in A^{(\bar q)}$ and $y\in\T^2$ with the properties that $|y-x| < (q^2+q) \kappa_q \| \xi \|_{\infty} + \frac{\ell_q}{2 q} $, and
\[
\tau := \inf \left\{s \in [t_{\bar q} , t_{\bar q +1}]: |X_s^{\kappa_q}(y) - X_s^q(x)| \geq 3 \kappa_q \| \xi \|_\infty + (q^2+q) \kappa_q \| \xi \|_{\infty} + \frac{\ell_q}{2 q}  \right\}
\]
is a well-defined element of $[t_{\bar q} , t_{\bar q +1}]$ (as the infimum is taken over a non-empty set).
Since $\xi$ is càdlàg, so is $X^{\kappa_q}(y)-X^q(x)$; therefore by the definition of $\tau$, one has
\begin{equation}\label{eq:up_to_tq_proof1}\begin{split}
    3 \kappa_q \| \xi \|_\infty + (q^2+q) \kappa_q \| \xi \|_{\infty} + \frac{\ell_q}{2 q} 
& \leq |X_{\tau}^{\kappa_q}(y) - X_\tau^q(x)| \\
& = \bigg| X_{t_{\bar{q}}}^{\kappa_q} (x)  - X_{t_{\bar{q}}}^{q} ( y) + \int_{t_{\bar{q}}}^{\tau} \bigg( u(s, X_{s}^{\kappa_q}(y)) - u_q(s, X_{s}^q(x)) \bigg) \dd s + \kappa_q \xi_{\tau} \bigg| \\
& \leq |X_{t_{\bar{q}}}^{\kappa_q} (x)  - X_{t_{\bar{q}}}^{q} ( y)|+\bigg| \int_{t_{\bar q}}^{\tau} \bigg( u(s, X_{s}^{\kappa_q}(y)) - u_q(s, X_{s}^q(x)) \bigg) \dd s\bigg| + \big|\kappa_q \xi_{\tau} \big| \\
& \leq |X_{t_{\bar{q}}}^{\kappa_q} (x)  - X_{t_{\bar{q}}}^{q} ( y)|+\int_{t_{\bar{q}}}^{\tau} | u (s, X_{s}^{\kappa_q}(y)) - u_q(s, X_{s}^q(x)) | \dd s + \kappa_q \norm{\xi}_{\infty}.
\end{split}\end{equation}

By definition of $A^{(\bar q)}$ and the fact that $\bar q<q$, one has $X_{t_{\bar q}}^q(x) \in \mathcal{H}_{\bar{q}+1}$; by~\eqref{eq:prop_X_1}, one then also has $X_s(x) \in \mathcal{H}_{\bar q}$ for $s \in J_{\bar q,1}$. Similarly, thanks to $x \in  A^{(\bar q)}$, we have 
\[
X_{t_{\bar q} + (t_{\bar q +1} -t_{\bar q})/3}^q(x) \in \mathcal{V}_{\bar q},
\]
as well as $X_s^q(x) \in \mathcal{V}_{\bar q}$ for $s \in J_{\bar q,2}$ by~\eqref{eq:prop_X_2}.
By the definition of $\tau$, for all $s \in [0, \tau)$ we have
\[
|X_{s}^{\kappa_q}(y)- X_{s}^q(x)| \leq  3 \kappa_q \| \xi \|_\infty + (q^2+q) \kappa_q \| \xi \|_{\infty} + \frac{\ell_q}{2 q}  \leq 2q^2 \kappa_q \norm{\xi}_{\infty} + \frac{\ell_q}{2} \leq  \ell_q,
\]
where the last inequality holds true because of the assumption $\| \xi\|_\infty\leq q$, which together with~\eqref{eq:kappa-q-ell-q} gives $2q^2\kappa_q \, \norm{\xi}_{\infty} \leq 2q^3 \kappa_q\leq \ell_q$. Recalling \eqref{eq:prop_u_hor}, \eqref{eq:prop_u_ver} and \eqref{eq:prop_u_zero}, we deduce that
\[
u(s, X_{s}^{\kappa_q}(y)) - u_q(s, X^q_{s}(x)) = 0\quad \forall\, s \in [t_{\bar q}, \tau).
\]

Inserting this fact in~\eqref{eq:up_to_tq_proof1} and using the inductive assumption~\eqref{eq:up_to_tq_proof_claim} with $p=0$, one concludes that
\[
   (q^2+q) \kappa_q \| \xi \|_{\infty} + \frac{\ell_q}{2 q} +  3 \kappa_q \| \xi \|_\infty  \leq |X_{t_{\bar{q}}}^{\kappa_q} (x)  - X_{t_{\bar{q}}}^{q} ( y)| + \kappa_q \|\xi\|_{\infty} < (q^2+q ) \kappa_q \| \xi \|_{\infty} + \frac{\ell_q}{2 q} + \kappa_q \| \xi \|_\infty \,, 
 \]
which yields a contradiction. Therefore~\eqref{eq:up_to_tq_proof_claim} must hold for $t\in [t_{\bar q},t_{\bar q +1 }]$.

\textbf{STEP 2:} We show that, if the property~\eqref{eq:up_to_tq_proof_claim} holds for $p-1$, then it holds for $p$, where $2 \leq p \leq q - \bar{q}$.
Let $x$, $y$ as before and assume by contradiction that there exists $t\in [t_{\bar{q}+ p-1},t_{\bar q + p}]$ such that~\eqref{eq:up_to_tq_proof_claim} does not hold; we can now consider
\[
\tau := \inf \left\{s \in [t_{\bar{q}+ p-1},t_{\bar q + p}]: |X_s^{\kappa_q}(y) - X_s^q(x)| \geq  3 p \kappa_q  \, \norm{\xi}_{\infty} +(q^2+q) \kappa_q \| \xi \|_{\infty} + \frac{\ell_q}{2 q} \right\}.
\]
By~\eqref{eq:pathwise-sde}, we have
\[
X_{t}^{\kappa_q} (y) = X_{t_{ \bar{q}+ p-1}}^{\kappa_q}(y) + \int_{t_{\bar{q} +p-1}}^t u (s, X_{s}^{\kappa_q}(y)) \dd s + \kappa_q  (\xi_t - \xi_{t_{\bar{q} + p-1}});
\]
subtracting a similar identity for the deterministic flow $X_t^q$, we have
\[
\begin{aligned}
X_t^{\kappa_q} (y) - X_t^q (x) = X_{t_{\bar{q} + p-1}}^{\kappa_q}(y) - X_{t_{ \bar{q} + p-1}}^q (x) + \int_{t_{\bar{q} + p-1}}^t  u (s, X_{s}^{\kappa_q}(y)) - u_q(s, X_{s}^q(x)) \dd s
 + \kappa_q  (\xi_t - \xi_{t_{ \bar{q} + p-1}}).
\end{aligned}
\]
By the inductive assumption
\[
\big|X_{t_{\bar{q}+ p-1}}^{\kappa_q}(y) - X_{t_{\bar{q}+ p-1}}^q (x) \big| <  3 (p-1) \kappa_q  \, \norm{\xi}_{\infty} +(q^2+q) \kappa_q \| \xi \|_{\infty} + \frac{\ell_q}{2 q}
\]
and by the definition of $\tau$ we have
\[
\big|X_{s}^{\kappa_q}(y) - X^q_{s}(x) \big| <  3 p \kappa_q  \|\xi\|_{\infty} +(q^2+q) \kappa_q \| \xi \|_{\infty} + \frac{\ell_q}{2 q}, \quad \text{ for all } s \in [t_{\bar{q}+ p-1}, \tau).
\]
Following similar arguments as in Step 1, one still has
\[
\int_{t_{p-1}}^{\tau} \bigg| u(s, X_{s}^{\kappa_q}(y)) - u_q(s, X^q_{s}(x)) \bigg| \dd s=0.
\]
Indeed, combining inequality~\eqref{eq:kappa-q-ell-q}, the fact that $p\leq q$ and the assumption $\|\xi\|_\infty\leq q$, we deduce that
\begin{align*}
\big|X_{s}^{\kappa_q}(y) - X_{s}^q(x) \big| \leq  3 p \kappa_q  \, \norm{\xi}_{\infty} +(q^2+q) \kappa_q \| \xi \|_{\infty} + \frac{\ell_q}{2 q} \leq 5 q^3 \kappa_q + \frac{\ell_q}{2 q} \leq   \ell_q\quad\forall\, s\in [t_{\bar{q}+ p-1},\tau);
\end{align*}
hence together with~\eqref{eq:prop_u_hor}, we deduce that $X_{s}^q(x) \in \mathcal{H}_{\bar{q}+ p}$ for all $s\in [t_{\bar{q}+ p-1}, \tau)$ and therefore
\[
u(s, X_{s}^{\kappa_q}(y, \omega)) - u_q(s, X_{s}^q(x)) = 0 \quad \forall\, s \in J_{\bar{q}+ p-1,1}\cap [0,\tau).
\]
Similarly, using~\eqref{eq:prop_u_ver} (resp.~\eqref{eq:prop_u_zero}), one concludes the same equality for all $s \in J_{\bar{q}+ p,2}\cap [0,\tau)$ (resp. $s\in J_{\bar{q}+ p,3}\cap [0,\tau)$).
Then, by the definition of $\tau$, the càdlàg property of $X^{\kappa_q}-X^q$, and the inductive assumption~\eqref{eq:up_to_tq_proof_claim}, we find that
\[
\begin{aligned}
& 3 p \kappa_q   \|\xi\|_{\infty}  +(q^2+q) \kappa_q \| \xi \|_{\infty} + \frac{\ell_q}{2 q}  \leq |X_{\tau}^{\kappa_q}(y, \omega) - X_\tau^q(x)| 
\\
&   \leq  \left | X_{t_{\bar{q}+ p-1}}^{\kappa_q}(y, \omega) - X_{t_{\bar{q}+ p-1}}^q(x, \omega) \right | 
 + \left | \int_{t_{\bar{q}+ p-1}}^{\tau} \bigg( u(s, X_{s}^{\kappa_q}(y, \omega)) - u_q(s, X^q_{s}(x)) \bigg) \dd s  \right | 
 + \left |\kappa_q (\xi_{\tau}-\xi_{t_{\bar{q}+ p-1}}) \right | \\
& < 3 (p-1) \kappa_q  \|\xi\|_{\infty}  +(q^2+q) \kappa_q \| \xi \|_{\infty} + \frac{\ell_q}{2 q}  + \underbrace{\int_{t_{p-1}}^{\tau} \bigg| u(s, X_{s}^{\kappa_q}(y, \omega)) - u_q(s, X^q_{s}(x)) \bigg| \dd s}_{=0} + 2 \kappa_q \norm{\xi}_{\infty}  \\
&= 3 (p-1) \kappa_q \|\xi\|_{\infty}  +(q^2+q) \kappa_q \| \xi \|_{\infty} + \frac{\ell_q}{2 q}  + 2 \kappa_q \norm{\xi}_{\infty},
\end{aligned}
\]
which implies
\[
3\kappa_q \norm{\xi}_{\infty} < 2\kappa_q \norm{\xi}_{\infty}.
\]
This is a contradiction, therefore concluding the proof.
\end{proof}

Similarly, one can prove stability between the Lagrangian and stochastic flows for $t \in [1-t_q,1]$ using the sets $A_3$ and $A_4$ in the definition of $A$ in~\eqref{d:A}. We omit the proof, as it is identical to the previous proposition.

\begin{proposition}
\label{prop:stability_after_singularity}
Consider the same setting as in Proposition~\ref{prop:up_to_tq} and suppose that $X^{\kappa_q}(y)$ is a solution to~\eqref{eq:pathwise-sde} with $\|\xi\|_\infty \leq q$.
Then, for all $x \in A^{(\bar{q})}$, it holds that
\begin{equation}
\sup_{t\in [1-t_q,1]} \left|X_{1-t_q, t}^{\kappa_q}(y) - X_{1-t_q, t}^{q}(x) \right| \leq 4 q^3 \kappa_q \| \xi\|_\infty + \frac{2}{3}\ell_q \quad \text{for all $y$ such that}\quad |x-y| \leq 2 q^2 \kappa_q \norm{\xi}_{\infty} + \frac{2 }{3q}\ell_q.
\end{equation}
\end{proposition}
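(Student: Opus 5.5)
The plan is to mirror the proof of Proposition~\ref{prop:up_to_tq}, running time forward on $[1-t_q,1]$ and using the sets $A_3^{(\bar q)}$, $A_4^{(\bar q)}$ in place of $A_1^{(\bar q)}$, $A_2^{(\bar q)}$. On $[1-t_q,1]$ one has $u_q=u$. By~\eqref{eq:def_u}, on each reflected block $[1-t_{p+1},1-t_p]$ with $p\le q-1$, the field is the time reversal $-u(1-t,\cdot)$ of the blocks $J_{p,2}$ and $J_{p,1}$ (now traversed in the order vertical, then horizontal), together with the spatially constant swap $\eta_{p,3}(1-t)v_{{\rm swap},p}$ on $1-J_{p,3}$. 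The spatially constant term contributes identically to $u(s,X^{\kappa_q}_s(y))$ and $u_q(s,X^q_s(x))$, so it cancels in the difference. The analogues of~\eqref{eq:prop_u_hor}--\eqref{eq:prop_u_zero} therefore hold on the reflected intervals $1-J_{p-1,i}$: the difference $u(s,\cdot)-u_q(s,\cdot)$ at two points within distance $\ell_q$ vanishes whenever the reference point lies in $\mathcal V_p$ (on the vertical-shear interval) or in $\mathcal H_p$ (on the horizontal-shear interval).

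\textbf{Step 1 (localization).} First I would handle the initial segment where the Gr\"onwall factor is uncontrolled. On $[1-t_q,1-t_{\bar q}]$ I would argue inductively over the blocks $p=q,q-1,\dots,\bar q+1$. For $x\in A^{(\bar q)}$ and any $q'\ge \bar q$, the definition of $A_3^{(\bar q)}$ gives $X^{q}_{1-t_p}(x)\in\mathcal V_p$ for all $p\le q$, and $A_4^{(\bar q)}$ gives membership in $\mathcal H_p$ at the intermediate time $1-t_p+2(t_p-t_{p-1})/3$. The analogues of~\eqref{eq:prop_X_1}--\eqref{eq:prop_X_2} for shear flows then keep the trajectory inside these sets throughout the corresponding shear interval, since a horizontal shear preserves the $x_2$ coordinate and a vertical shear preserves $x_1$.

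\textbf{Step 2 (stopping-time contradiction).} On each block I would define the stopping time $\tau$ as the first time the deviation exceeds the previous bound plus $3\kappa_q\|\xi\|_\infty$. Before $\tau$, the deviation stays below $4q^3\kappa_q\|\xi\|_\infty+\frac23\ell_q\le\ell_q$; here I would use~\eqref{eq:kappa-q-ell-q} and $\|\xi\|_\infty\le q$, possibly after shrinking $a_0$. Consequently the drift difference integrates to zero, and the only growth comes from the noise increment $\kappa_q(\xi_\tau-\xi_{t})$, of size at most $2\kappa_q\|\xi\|_\infty$. This yields the same contradiction $3<2$ as in Step 2 of Proposition~\ref{prop:up_to_tq}. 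Summing over at most $q$ blocks adds $3q\kappa_q\|\xi\|_\infty$ to the initial error $2q^2\kappa_q\|\xi\|_\infty+\frac{2}{3q}\ell_q$.

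\textbf{Step 3 (Gr\"onwall on the final segment).} On the final segment $[1-t_{\bar q},1]$ I would apply Gr\"onwall directly. By symmetry, $\int_{1-t_{\bar q}}^1\|\nabla u\|_{C^0_x}\dd s=\int_0^{t_{\bar q}}\|\nabla u\|_{C^0_x}\dd s\le\log q$, since the swap term has zero gradient. The accumulated error up to that point, plus $\kappa_q\|\xi\|_\infty$, is then multiplied by at most $q$. This gives a bound of the form $q(2q^2+3q+1)\kappa_q\|\xi\|_\infty+\frac23\ell_q$, which must fit under $4q^3\kappa_q\|\xi\|_\infty+\frac23\ell_q$ for $q$ large. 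Because the Gr\"onwall piece now comes last rather than first, the $\ell_q$ term gets multiplied by $q$, which is exactly why the input tolerance carries $\frac{2}{3q}\ell_q$.

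\textbf{Main obstacle.} The delicate point is verifying that the swap does not break the localization. The deterministic trajectory $X^q$ must genuinely remain in $\mathcal V_p$ and $\mathcal H_p$ at the reflected times for every $p\le q$; this is what $A_3$ and $A_4$ encode, but only through the flow $X^q$ itself, which is consistent since we compare against $X^q$ directly. One must also check that the ordering of the vertical and horizontal phases after reflection matches the indices in $A_3$ and $A_4$. The other delicate point is the constant bookkeeping, namely that $q\cdot(\text{accumulated error})$ stays below $4q^3\kappa_q\|\xi\|_\infty+\frac23\ell_q\le\ell_q$.
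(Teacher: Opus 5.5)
Your proposal is correct and is the time-mirrored version of Proposition~\ref{prop:up_to_tq} that the paper has in mind when it omits this proof: stopping-time localization on the reflected fine-scale blocks $[1-t_q,1-t_{\bar q}]$ via $A_3^{(\bar q)},A_4^{(\bar q)}$, then Gr\"onwall with factor at most $q$ on $[1-t_{\bar q},1]$ (using $\|\nabla u(t)\|_{C^0_x}=\|\nabla u(1-t)\|_{C^0_x}$), which is indeed why the input tolerance carries $\frac{2}{3q}\ell_q$. The swap point you flag closes easily: no localization is needed on $1-J_{p-1,3}$, since the field is spatially constant there, and it shifts $X^q$ horizontally by exactly $\int\eta_{p-1,3}\,a_p=a_p$, which preserves $\mathcal V_p$ because the lines $V_p$ are $a_p$-periodic in $x_1$ (also, the noise increment in the final Gr\"onwall step is at most $2\kappa_q\|\xi\|_\infty$ rather than $\kappa_q\|\xi\|_\infty$, and $q(2q^2+3q+2)\leq 4q^3$ for $q\geq 2$ still suffices).
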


\section{Proof of Theorem~\ref{th:main_theorem}}\label{sec:proof_main_thm}

Let $W$ satisfy Assumption~\ref{ass:noise} and $\alpha\in (\alpha_W,1)$, $\eps>0$ be given. We now choose $a_0$, $\delta>0$ sufficiently small so that the divergence-free velocity field $u$ constructed in Section~\ref{subsec:construction_velocity} satisfies $u\in C^\alpha([0,1]\times\T^2;\R^2)$ (by Lemma~\ref{lemma:regularity-holder}) and the set $A$ defined by~\eqref{d:A} satisfies $|A|\geq 1-\eps$ (by Lemma~\ref{lem:lebesgue_measure_A}).

We denote by $(X^\kappa_t(x,\omega))_{t \geq 0}$ the stochastic flow associated to the SDE
\[
X^\kappa_t = x + \int_0^t u(r,X^\kappa_r) \dd r + \kappa W_t,
\]
which exists thanks to condition $\alpha>\alpha_W$ and the results recalled in Section~\ref{sec:regbynoise_overview}.
Let us also recall that $X^q_t(x)$ denotes the deterministic flow associated to the approximation $u^q$ of $u$ defined by~\eqref{eq:defn_uq}. 

The proof of our main result relies on the next statement, showing that at time $t=1$ the stochastic flow $X^{\kappa_q}_1$ and the deterministic one $X^q_1$ get closer and closer as $q\to\infty$.

\begin{proposition}\label{prop:preliminary_main_thm}
   For a.e. $x\in \T^2$, it holds that
    \begin{equation}\label{eq:stability_preliminary_main_thm}
        \P\left( \omega\in\Omega: \lim_{q\to\infty} \dist ( X_1^{\kappa_q}( x, \omega), X^q_1( x))=0\right) =1.
    \end{equation}
\end{proposition}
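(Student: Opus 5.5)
The plan is to split $[0,1]$ into the three regimes described in the introduction, $[0,t_q]$, $[t_q,1-t_q]$ and $[1-t_q,1]$, and to chain the pathwise estimates of Propositions~\ref{prop:up_to_tq}-\ref{prop:stability_after_singularity} with a stochastic estimate on the middle interval. First I fix a good event for the noise. Since $\sup_{t\in[0,1]}|W_t|<\infty$ $\P$-a.s. (for fBm by continuity, for stable processes by~\eqref{eq:moments_stable}), for a.e. $\omega$ we have $\|W(\omega)\|_\infty\leq q$ for all $q$ large. Also, every $x$ outside a null set belongs to $A^{(j)}$ for some $j$, because $|A^{(j)}|\to 1$ by Lemma~\ref{lem:lebesgue_measure_A} and the sets $A^{(j)}$ are increasing in $j$. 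As $\bar q(q)\to\infty$, for such $x$ we get $x\in A^{(\bar q)}$ for all $q$ large.

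Applying Proposition~\ref{prop:up_to_tq} with $y=x$ and $\xi=W(\omega)$ then gives
\[
|X^{\kappa_q}_{t_q}(x)-X^q_{t_q}(x)|\leq 2q^3\kappa_q+\ell_q/(2q).
\]
On the middle interval $[t_q,1-t_q]$, $u_q$ vanishes except for the swap term on $\tilde J_{q,3}$ and the contributions on $J_{q,3}$. Hence $X^q_{1-t_q}(x)=X^q_{t_q}(x)+\int_{t_q}^{1-t_q}u_q\,\dd r$, where the last integral is a deterministic constant shift. For the stochastic flow I write
\[
X^{\kappa_q}_{1-t_q}-X^{\kappa_q}_{t_q}=\int_{t_q}^{1-t_q}u(r,X^{\kappa_q}_r)\,\dd r+\kappa_q(W_{1-t_q}-W_{t_q}).
\]
The noise increment is $O(q\kappa_q)$ on the good event. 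The drift integral I split into the $q$-th block, the blocks with index $p\geq q+1$, and the swap parts. The swap parts are spatially constant and cancel exactly against the $u_q$ contributions. On blocks with $p\geq q+2$, the a priori bound $\|u\|_{L^1(J_{p,i};L^\infty)}\lesssim a_{p}$ gives an error $\lesssim a_{q+2}\ll\ell_q$. The dangerous terms are the ones active at frequency $a_{q+1}^{-1}\gg\kappa_q^{-1}$, namely $\tilde v_{q,i}$ and $\tilde v_{q+1,i}$, which have amplitude of order $a_q$ and $a_{q+1}$. These cannot be bounded pathwise and require Lemma~\ref{lem:lucio_v2}.

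To handle them, I freeze the drift. On $J_{q,1}$ the field $\tilde v_{q,1}$ depends only on $x_2$, and $x_2$ evolves only through the noise and the vertical component of $u$, which vanishes there. So $X^{\kappa_q,2}_r=Z+\kappa_q W^2_r$ with $Z$ being $\cF_{t_q}$-measurable, and Lemma~\ref{lem:lucio_v2} applies with $\psi\equiv0$, as in Remark~\ref{rem:practical_applications_SSL}. Using~\eqref{eq:norm-1}, this gives an $L^2(\Omega)$ bound of order
\[
q^{c}\,\kappa_q^{-\theta}a_{q+1}^{\theta}a_q=q^c a_q^{1-\theta/2}a_{q+1}^{\theta/2}.
\]
For this to be $\ll\ell_q=a_q^\gamma a_{q+1}^{1-\gamma}$ up to a power $a_q^{\eps'}$, I need $\theta/2>1-\gamma$. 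By the constraints (i)--(iii) on $\gamma$, this is compatible with $\theta<1/(2H)$ (respectively $\theta<\beta/2$, or $\theta=1$ if $H<1/2$). The vertical block $J_{q,2}$ is treated symmetrically using~\eqref{eq:norm-2}, and the reflected blocks in $[1/2,1-t_q]$ likewise. A similar but cheaper bound handles the $(q+1)$ blocks. By the superexponential decay, the resulting errors $\epsilon_q$ are summable in $q$, so Chebyshev and Borel--Cantelli give $|X^{\kappa_q}_{1-t_q}(x)-X^q_{1-t_q}(x)|\leq 2q^2\kappa_q q+\tfrac{2}{3q}\ell_q$ eventually, $\P$-a.s.

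Finally, Proposition~\ref{prop:stability_after_singularity} is applied from time $1-t_q$ with $y=X^{\kappa_q}_{1-t_q}(x)$ and the base point $X^q_{1-t_q}(x)$, using the flow property. This requires the base point to lie in the set where that proposition applies, which I would obtain from the definition of $A_3,A_4$. It yields $|X^{\kappa_q}_1(x)-X^q_1(x)|\lesssim q^4\kappa_q+\ell_q\to0$.

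I expect the middle regime to be the main obstacle. The difficulty is rigorously justifying the decoupling of components needed to apply Lemma~\ref{lem:lucio_v2}: the lemma needs a noise adapted to a filtration, with the integrand evaluated at $Z+\kappa W$, and for fBm one must invoke LND with respect to $\cF_{t_q}$. The other delicate point is checking that the powers of $a_q$ beat the $q^c$ losses from the time cut-offs and the scale $\ell_q$.
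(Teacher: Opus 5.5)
Your proposal follows the paper's proof: Proposition~\ref{prop:up_to_tq} on $[0,t_q]$ on the event $\{\sup_t|W_t|\leq q\}$; the shear decoupling $X^{\kappa_q,(2)}_r=Z+\kappa_q W^{(2)}_r$ on $J_{q,1}$ with Lemma~\ref{lem:lucio_v2} for $\psi\equiv 0$; Chebyshev and Borel--Cantelli under $\theta/2>1-\gamma$; and Proposition~\ref{prop:stability_after_singularity} restarted at $1-t_q$. Your use of the monotonicity of $A^{(j)}$ to get $x\in A^{(\bar q(q))}$ for large $q$ is more careful than the paper's. Two points in the middle regime need correcting, though neither breaks the argument.

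\textbf{The approximating field vanishes in the middle.} By \eqref{eq:defn_uq}, $u_q\equiv 0$ on all of $[t_q,1-t_q]$, and $u\equiv 0$ on $J_{q,3}$ in any case. Hence $X^q_{1-t_q}=X^q_{t_q}$, and $u_q$ contains no swap term for the swap terms of $u$ to cancel against. The swap terms of index $p\geq q$ are instead bounded pathwise by $\sum_{p\geq q}a_{p+1}\lesssim a_{q+1}\ll \ell_q/q$.

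\textbf{Only one block needs the stochastic estimate.} Only $\tilde v_{q,1}$, on $J_{q,1}$ and its reflection $1-J_{q,1}$, has amplitude of order $a_q$. Everything else, including $\tilde v_{q,2}$ and the level-$(q+1)$ blocks, is handled in the paper by the trivial bound $\|u\|_{L^\infty([s_q,1-s_q];L^\infty_x)}\lesssim (q+1)^2a_{q+1}\ll\ell_q/q$. So Lemma~\ref{lem:lucio_v2} is needed only twice.

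For $J_{q,2}$ this matters beyond economy. Because of the $+1$ in its definition, $v_{q+1,2}$ has mean $a_{q+1}/2$. Its $C^{-\theta}_x$ norm is therefore of order $a_{q+1}$, not the $a_{q+1}^{1+\theta}$ stated in \eqref{eq:norm-2}. With the correct norm, the factor $\kappa_q^{-\theta}$ in Lemma~\ref{lem:lucio_v2} makes the stochastic bound worse than the pathwise one. Your ``symmetric'' treatment of $J_{q,2}$ should therefore be replaced by the pathwise estimate.
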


\begin{proof}
Let $A^{(j)}\subset \T^2$ be the set defined in~\eqref{d:A-j} for any $j \in \N$. 
We recall that from the definition we have $A^{(j)}\subset A^{(j+1)}$ for any $j\in \mathbb{N}$ and $|A^{(j)}| \to 1$ as $j \to \infty$ from Lemma \ref{lem:lebesgue_measure_A}. Hence for a.e. $x\in \mathbb{T}^2$ there exists $\bar q \in \N$ so that $x\in A^{(\bar q)}$. We now fix such $x\in A^{(\bar q)}$.
Let us define 
\begin{equation}\label{eq:defn_Omega_q}
    \Omega_q := \left\{\omega\in\Omega: \sup\limits_{t \in [0, 1] } |W_t| \leq q\right\}.
\end{equation}
For all $\omega \in \Omega_q$, using $\xi_t:=W_t(\omega)$ in Proposition~\ref{prop:up_to_tq}, we have 
\begin{equation}\label{eq:main_prop_starting_point}
    \left|X_t^{\kappa_q}(x, \omega) - X^q_t(x) \right| \leq  2q^3\kappa_q +\frac{\ell_q}{2q} \quad  \forall t\in[0,t_q].
\end{equation}
Recall the definitions of $t_q$, $s_q$, $J_{q,i}$ from~\eqref{eq:defn_J_qi} and let us define
\begin{align}
\Omega^x_q := \left \{\omega\in\Omega:  \left| \int_{t_q}^{s_q} u_q(r, X_r^{\kappa_q}(x,\omega))\dd  r \right| \leq \frac{\ell_q}{20q} \right \} \cap \left \{ \omega\in\Omega:  \left| \int^{1- t_q}_{1- s_q} u_q(r , X_r^{\kappa_q}(x,\omega))\dd r \right| \leq \frac{\ell_q}{20q} \right \}.
\end{align}
Henceforth, to lighten the notation, we will often omit the dependence of stochastic processes on $(x,\omega)$.
Since $X^\kappa_q$ solves the SDE~\eqref{eq:intro_SDE}, we have
\begin{align}
X_{1-t_q}^{\kappa_q} - X_{t_q}^{\kappa_q} = \int_{t_q}^{1-t_q}u_q(r , X_r^{\kappa_q})\dd r + \kappa_q(W_{1-t_q} - W_{t_q});
\end{align}
therefore, if $\omega \in \Omega^x_q \cap \Omega_q$, then 
\[
\begin{aligned}
\left| X_{1-t_q}^{\kappa_q} - X_{t_q}^{\kappa_q} \right| &= \left| \int_{t_q}^{1-t_q}u_q(r , X_s^{\kappa_q})\dd r + \kappa_q(W_{1-t_q} - W_{t_q}) \right| \\
&\leq \left| \int_{t_q}^{s_q} u_q(r, X_r^{\kappa_q})\dd r \right| + \| u\|_{L^\infty ([s_q ,  1-s_q];L^\infty_x)} + \left| \int^{1- t_q}_{1-s_q} u_q(r, X_r^{\kappa_q})\dd r \right| + 2\kappa_q \sup\limits_{t \in [0, 1]}|W_t|
\\
& \leq \frac{\ell_q}{20q} + C (q+1)^2 a_{q+1} + \frac{\ell_q}{20q} + 2q\kappa_q
\end{aligned}
\]
where in the last passage we used the estimate $\| u\|_{L^\infty ([s_q ,  1-s_q];L^\infty_x)} \lesssim (q+1)^2 a_{q+1}$, which holds true thanks to definition of $s_q$ (cf.~\eqref{eq:defn_J_qi}), the construction of $u$ (cf.~\eqref{eq:def_u}) and estimate~\eqref{eq:estim_building_blocks}.
It then follows from the choice of the parameters~\eqref{eq:a-q}, \eqref{eq:ell-q} and~\eqref{eq:kappa-q} that, for all $q\geq q_0$ large enough, it must hold
\begin{align*}
    | X_{1-t_q}^{\kappa_q} - X_{t_q}^{\kappa_q}| \leq \frac{\ell_q}{6q}.
\end{align*}
Hence, for all $\omega \in \Omega_q^x \cap \Omega_q$ with $q$ sufficiently large, by the above estimate and~\eqref{eq:main_prop_starting_point} we have
$$
\begin{aligned}
|X^{\kappa_q}_{1-t_q}(x, \omega) - X^{q}_{1-t_q}(x)| &\leq |X^{\kappa_q}_{1-t_q}(x, \omega) - X^{\kappa_q}_{t_q}(x, \omega)| + |X^{\kappa_q}_{t_q}(x, \omega) - X^{q}_{t_q}(x)| + |X^{q}_{t_q}(x) - X^{q}_{1-t_q}(x)| \\
& \leq \frac{\ell_q}{6q} +  2q^3\kappa_q + \frac{\ell_q}{2q} +0 \,.
\end{aligned}
$$
By Proposition~\ref{prop:stability_after_singularity},  for all $t \geq 1-t_q$ and $\omega \in \Omega_q^x \cap  \Omega_q$, we deduce that
$$
\begin{aligned}
|X^{\kappa_q}_{t}(x, \omega) - X^{q}_{t}(x)| &= | X^{\kappa_q}_{1-t_q, t}(X_{1-t_q}^{\kappa_q}(x, \omega), \omega) - X^{q}_{1-t_q, t}(X_{1-t_q}^{q}(x)) |
\leq \frac{2}{3}\ell_q +  4 q^4\kappa_q.
\end{aligned}
$$
Overall, we have found that
\begin{align} \label{eq:estimate-stoch-det}
    |X^{\kappa_q}_{1}(x, \omega) - X^{q}_{1}(x)| \leq \frac{2}{3}\ell_q +  4 q^4\kappa_q \,, \quad \forall\, \omega \in \Omega^x_q \cap \Omega_q,\, q\geq q_0.
\end{align}

Let us consider
\begin{align} \label{d:tilde-omega}
        \tilde \Omega^x :=\liminf_{q\to\infty} (\Omega^x_q \cap \Omega_q)
        := \bigcup\limits_{p \geq 0}\bigcap\limits_{q \geq p} (\Omega^x_q \cap \Omega_q).
\end{align}
By definition, for any $ \omega \in \tilde \Omega^x$ there exists $p=p(\omega) \geq 0$ such that $\omega \in \Omega_q^x \cap \Omega_q$ for all $q \geq p$; in light of~\eqref{eq:estimate-stoch-det} and~\eqref{eq:kappa-q-ell-q}, it follows that
\begin{align}
\label{eq:lagrangian-estimate}
\limsup_{q\to\infty} |X^{\kappa_q}_{1}(x, \omega) - X^{q}_{1}(x)|
\leq \limsup_{q\to\infty} \left(\frac{2}{3}\ell_q +  4 q^4\kappa_q\right) = 0 \quad \forall\, \omega\in \Omega^x.
\end{align}

To deduce~\eqref{eq:stability_preliminary_main_thm}, we need to prove that $\P (\tilde \Omega^x) = 1$. We clearly have that $\Omega_{q} \subset \Omega_{q+1}$, hence it is straightforward to see that
\begin{equation*}
    \P \left( \bigcup\limits_{p \geq 0}\bigcap\limits_{q \geq p}  \Omega_q \right) =\P\left( \sup_{t \in [0, 1] } |W_t|<+\infty \right)=1;
\end{equation*}
to conclude, it then only remains to show that
\begin{align}
\label{eq:limsup-set-omega-tilde}
\P \left( \bigcup\limits_{p \geq 0}\bigcap\limits_{q \geq p} \Omega^x_q \right) = 1.
\end{align}

We now distinguish three cases depending on the stochastic process $W$, and we apply Lemma~\ref{lem:lucio_v2} with a fixed parameter $\theta \in (0,1]$, chosen as follows:
\begin{itemize}
    \item if $W$ is an fBm with parameter $H \in (0,1/2)$, we set $\theta = 1$;
    \item if $W$ is an fBm with parameter $H\in [1/2,+\infty)\setminus\N$, we set $\theta = \frac{1}{2H} - ( \frac{1}{4H} -1 + \gamma )$, which satisfies $\theta\in (0, \frac{1}{2H})$
    thanks to~\eqref{eq:ii};
    \item if $W$ is a $\beta$-L\'evy process with $\beta \in (0,2]$, we set $\theta = \frac{\beta}{2} - (\frac{\beta}{4} -1 + \gamma ) \in (0, \frac{\beta}{2})$, thanks to~\eqref{eq:iii}.
\end{itemize}

Let us write $X^{\kappa_q}_t=(X^{\kappa_q,(1)}_t,X^{\kappa_q,(2)}_t)$ to distinguish the coordinates of $X^{\kappa_q}_t$, similarly for $W^{(i)}_t$.
Recall by~\eqref{eq:def_u_half} that, on the intervals $J_{q,1}$ defined by~\eqref{eq:defn_J_qi}, the velocity field $u$ is defined is an horizontal shear flow: $u_q(r,x)=\tilde v_{q,1}(r,x_2)$.
Hence on such interval the SDE can be solved explicitly and one finds
$$
\int_{J_{q, 1}} u_q(r , X_{r}^{\kappa_q})\dd  r = \int_{J_{q, 1}} \tilde v_{q,1} \left(r, X_{t_q}^{\kappa_q,(2)} - \kappa_q W_{t_q}^{(2)} + \kappa_q W_{r}^{(2)}\right) \dd r.
$$
We can estimate the above quantity by applying Lemma~\ref{lem:lucio_v2} with
\begin{equation*}
    s=S=t_q,\quad t=T=s_q,\quad Z= X_{t_q}^{\kappa_q,(2)} - \kappa_q W_{t_q}^{(2)},\quad \psi\equiv 0,\quad f=\tilde v_{q,1}.
\end{equation*}
Therefore, by Markov's inequality and Lemma~\ref{lem:lucio_v2}, we deduce that
\begin{align*}
    \P \left( \omega \in \Omega:  \left| \int_{J_{q, 1}} u_q(r , X_{r}^{\kappa_q})\dd  r \right| > \frac{\ell_q}{20q} \right)
    & \lesssim \frac{q}{\ell_q} \left\| \int_{J_{q, 1}} u_q(r , X_r^{\kappa_q})\dd r \right\|_{L^2(\Omega)}\\
    & \lesssim q \frac{ \norm{\tilde v_{q,1}}_{L^\infty(J_{q, 1};C^{-\theta}_x)}}{\kappa_q^\theta \ell_q} \\
    & \lesssim \frac{ q^3 a_{q} a_{q+1}^\theta}{\kappa_q^\theta \ell_q}=C q^3 a_{q}^{\delta (\sfrac{\theta}{2} - 1 + \gamma)}, 
\end{align*}
where for the last inequality we used~\eqref{eq:norm-1}, while for the last equality we used~\eqref{eq:ell-q} and~\eqref{eq:kappa-q}.

By the choice of $\theta$, together with~\eqref{eq:i},~\eqref{eq:ii}, and~\eqref{eq:iii}, we have  $\sfrac{\theta}{2}-1+\gamma>0$, therefore
\begin{align*}
    \sum_{q \geq 1} \P \left( \omega \in \Omega:  \left| \int_{J_{q, 1}} u_q(r , X_{r}^{\kappa_q})\dd  r \right| >  \frac{\ell_q}{20q} \right) \lesssim \sum_{q\geq 1} q^3 a_{q}^{\delta (\sfrac{\theta}{2} - 1 + \gamma)} <+\infty
\end{align*}
which implies~\eqref{eq:limsup-set-omega-tilde} by the Borel-Cantelli lemma.
\end{proof}

Thanks to Proposition~\ref{prop:preliminary_main_thm}, we can transfer any non-selection property of flows $X^q_1$ to their stochastic counterparts $X^{\kappa_q}_1$.

\begin{proof}[Proof of Theorem~\ref{th:main_theorem}]
    For any $x \in A$, by~\eqref{eq:A_e_separated_A_o}, \eqref{eq:A_e} and \eqref{eq:A_o} it holds that 
$$ \liminf_{q\to\infty} \dist  (X_1^{2q} (x) , X_1^{2q+1} (x)) \geq 2 \ell_1 >0.$$
Combined with~\eqref{eq:stability_preliminary_main_thm} this implies that, for fixed $x \in A$, one has
$$
\P\left(\omega\in\Omega: \liminf_{q\to\infty}  \dist  (X_1^{\kappa_{2q}} (x, \omega) , X_1^{\kappa_{2q+1}} (x, \omega))  = \liminf_{q\to\infty} \dist  (X_1^{2q} (x) , X_1^{2q+1} (x))  \geq 2 \ell_1\right)=1.
$$
An application of Fubini-Tonelli's theorem then implies the conclusion~\eqref{eq:main_theorem}, for $A_\eps=A$ and $c_\eps=2\ell_1$.
To prove~\eqref{eq:main_theorem_eq2}, note that for fixed $x \in A$, by the compactness of $\T^2$, we can select a subsequence $(q_j)_j$ so that 
$$ X^{2q_j + 1 }_1 (x) \to y \neq y' \leftarrow X_1^{2 q_j} (x).$$
The conclusion then follows again by~\eqref{eq:stability_preliminary_main_thm}.
\end{proof}

We close this section with two byproducts of the construction from Theorem~\ref{th:main_theorem}. The first one asserts that, in the limit as $q\to\infty$, the random variables $X^{\kappa_q}_1(x)$ are becoming deterministic. Below $Var (Y)$ denotes the variance of a random variable $Y\in L^2(\Omega;\R)$.

\begin{corollary}\label{cor:vanishing_variance}
Let $u$  be given as in the proof of Theorem~\ref{th:main_theorem}. Then, for any $f\in C(\T^2)$ and a.e. $x \in \T^2$, one has
\[
\lim_{q \to \infty} Var(f(X_1^{\kappa_q}(x))) = 0.
\]
\end{corollary}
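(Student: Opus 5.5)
The plan is to deduce the statement directly from Proposition~\ref{prop:preliminary_main_thm}, combined with dominated convergence. Fix $f\in C(\T^2)$ and take $x$ in the full-measure set where~\eqref{eq:stability_preliminary_main_thm} holds. For such $x$, $\P$-almost surely we have $\dist(X^{\kappa_q}_1(x,\omega),X^q_1(x))\to 0$. Here $X^q_1(x)$ is a \emph{deterministic} point, since $X^q$ is the flow of the smooth field $u_q$. Since $\T^2$ is compact, $f$ is uniformly continuous with modulus $\varpi_f$, and therefore
\[
\big|f(X^{\kappa_q}_1(x,\omega))-f(X^q_1(x))\big|\leq \varpi_f\big(\dist(X^{\kappa_q}_1(x,\omega),X^q_1(x))\big)\longrightarrow 0 \quad \P\text{-a.s.}
\]

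Next we use the variational characterization of the variance: for any deterministic constant $c$ one has $Var(Y)\leq \E[|Y-c|^2]$. We apply it with $Y=f(X^{\kappa_q}_1(x))$ and $c=f(X^q_1(x))$, which gives
\[
Var\big(f(X_1^{\kappa_q}(x))\big)\leq \E\Big[\big|f(X^{\kappa_q}_1(x))-f(X^q_1(x))\big|^2\Big].
\]
The integrand tends to zero almost surely and is bounded by $4\|f\|_{C^0}^2$. Dominated convergence therefore yields that the right-hand side vanishes as $q\to\infty$, which concludes the proof.

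There is no genuine obstacle here. The only points to check are measurability of $\omega\mapsto X^{\kappa_q}_1(x,\omega)$, which is guaranteed by the stochastic flow from Section~\ref{sec:regbynoise_overview}, and that the exceptional null set of $x$ does not depend on $f$. The latter holds because it is exactly the set from Proposition~\ref{prop:preliminary_main_thm}.
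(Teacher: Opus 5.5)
Your proposal is correct and follows the paper's proof essentially line by line. The paper likewise centers at the deterministic point $f(X^q_1(x))$ to get $Var(f(X_1^{\kappa_q}(x)))\leq \E[|f(X_1^{\kappa_q}(x))-f(X_1^q(x))|^2]$, bounds this through the modulus of continuity of $f$ together with Proposition~\ref{prop:preliminary_main_thm}, and concludes by dominated convergence.
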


\begin{proof}
Since $\T^2$ is compact, $f$ is bounded and uniformly continuous, with modulus $h_f$. By the properties of the variance, one has
\begin{align*}
    Var(f(X_1^{\kappa_q}(x))) \leq \E[|f(X_1^{\kappa_q}(x))-f(X_1^q(x))|^2 ]\leq \E\Big[ h_f\big(\dist(X_1^{\kappa_q}(x),X_1^q(x))\big)^2\Big]
\end{align*}
By Proposition~\ref{prop:preliminary_main_thm}, for a.e. $x\in \T^2$, $\dist(X_1^{\kappa_q}(x),X_1^q(x))\big)$ converges $\P$-a.s. to $0$ as $q\to\infty$. The conclusion then follows by dominated convergence. 
\end{proof}

The next result states that, as $q\to\infty$, even the laws $Law(X^{\kappa_q}_1(x))$ do not admits a unique limit in $\mathcal{M}(\T^2)$. One way to capture this quantitatively is by means of the Wasserstein distance $\mathbb{W}_1$, which by Kantorovich-Rubinstein duality is given by
\begin{equation*}
    \mathbb{W}_1(\mu,\nu) = \sup\left\{\int_{\T^2} f(x)\mu({\rm d} x)-\int_{\T^2} f(x)\nu({\rm d} x) \Big\vert f:\T^2\to\R \text{ s.t. } |f(x)-f(y)|\leq \dist(x,y)\ \forall\, x,y\in\T^2\right\}.
\end{equation*}
Since $\T^2$ is compact, weak convergence of probability measures is equivalent to convergence w.r.t. $\mathbb{W}_1$.

\begin{corollary}\label{cor:lack_selection_laws}
    Let $u$ and $A$ be given as in the proof of Theorem~\ref{th:main_theorem}. Then there exists a constant $\tilde c_\eps>0$ such that, for all $x\in A$, one has
    \begin{equation*}
        \liminf_{q\to\infty} \mathbb{W}_1\big(Law(X^{\kappa_{2q}}_1(x)),Law(X^{\kappa_{2q+1}}_1(x))\big) \geq \tilde c_\eps >0.
    \end{equation*}
\end{corollary}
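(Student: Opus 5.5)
The plan is to use the Kantorovich--Rubinstein duality with a single well-chosen $1$-Lipschitz test function, combined with the almost sure convergence from Proposition~\ref{prop:preliminary_main_thm} and the separation properties~\eqref{eq:A_e_separated_A_o}--\eqref{eq:A_o}. One caveat comes first. Proposition~\ref{prop:preliminary_main_thm} holds only for a.e.\ $x$, more precisely for every $x\in A^{(\bar q)}$ for some $\bar q$. Since $A=A^{(1)}\subset A^{(j)}$ for all $j$, the proof of that proposition actually applies to every $x\in A$, so the statement ``for all $x\in A$'' is legitimate. I will note this explicitly, since the argument of Proposition~\ref{prop:preliminary_main_thm} fixes an arbitrary $x\in A^{(\bar q)}$ and never discards a null set of $x$.

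Now fix $x\in A$. Without loss of generality $x\in \mathcal{C}^{\mathrm{even}}_{a_0}$; the odd case is symmetric. Define $f(z):=\min\{\dist(z,F_e),\,2\ell_1\}$. This is $1$-Lipschitz on $\T^2$, equals $0$ on $F_e$, and equals $2\ell_1$ on $F_o$ by~\eqref{eq:A_e_separated_A_o}. By~\eqref{eq:A_e} and~\eqref{eq:A_o}, $X^{2q}_1(x)\in F_e$ and $X^{2q+1}_1(x)\in F_o$, so $f(X^{2q}_1(x))=0$ and $f(X^{2q+1}_1(x))=2\ell_1$ for every $q$.

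By duality,
\begin{equation*}
\mathbb{W}_1\big(Law(X^{\kappa_{2q}}_1(x)),Law(X^{\kappa_{2q+1}}_1(x))\big)\geq \E\big[f(X^{\kappa_{2q+1}}_1(x))\big]-\E\big[f(X^{\kappa_{2q}}_1(x))\big].
\end{equation*}
Using the Lipschitz property,
\begin{equation*}
\big|\E[f(X^{\kappa_{q}}_1(x))]-f(X^{q}_1(x))\big|\leq \E\big[\dist(X^{\kappa_q}_1(x),X^q_1(x))\wedge 2\ell_1\big].
\end{equation*}
The right-hand side tends to $0$ by Proposition~\ref{prop:preliminary_main_thm} and dominated convergence, since the integrand is bounded by $2\ell_1$. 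Hence the $\liminf$ of the Wasserstein distance is at least $2\ell_1-0=2\ell_1$, and we may take $\tilde c_\eps=2\ell_1$, which coincides with $c_\eps$.

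The only delicate point is the one already noted: upgrading ``a.e.\ $x$'' to ``all $x\in A$''. If that upgrade turned out to be problematic, the fallback is to state the corollary for a.e.\ $x\in A$, which is harmless.
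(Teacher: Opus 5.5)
Your proof is correct and is essentially the paper's argument. The paper also applies Kantorovich--Rubinstein duality to a single Lipschitz function separating $F_e$ from $F_o$ ($f\equiv 1$ on $F_e$, $f\equiv 0$ on $F_o$, $\|f\|_{Lip}\lesssim \ell_1^{-1}$, which is your $\min\{\dist(\cdot,F_e),2\ell_1\}$ up to normalization), and then passes to the limit via Proposition~\ref{prop:preliminary_main_thm}, dominated convergence and \eqref{eq:A_e}--\eqref{eq:A_o}. Your remark that Proposition~\ref{prop:preliminary_main_thm} applies to every $x\in A$, because $A=A^{(1)}\subset A^{(j)}$ and $\bar q(q)\to\infty$, is accurate, and it is exactly what the paper uses implicitly here and in the second part of Theorem~\ref{th:main_theorem}.
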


\begin{proof}
    Recall the sets $F_e$ and $F_o$ defined by~\eqref{eq:restricted:chessboards}; they are open sets for which~\eqref{eq:A_e_separated_A_o} holds, therefore we can find a Lipschitz function $f:\T^2\to\R$ such that $\| f\|_{Lip}\lesssim \ell_1^{-1}$ and $f\equiv 1$ on $F_e$, $f\equiv 0$ on $F_o$.
    It follows that
    \begin{align*}
        \mathbb{W}_1\big(Law(X^{\kappa_{2q}}_1(x)),Law(X^{\kappa_{2q+1}}_1(x))\big)
        & \geq \| f\|_{Lip}^{-1}\, |\E[f(X_1^{\kappa_{2q}}(x))-f(X^{\kappa_{2q+1}}_1(x))]|\\
        & \gtrsim \ell_1 \,|\E[f(X_1^{\kappa_{2q}}(x))-f(X^{\kappa_{2q+1}}_1(x))]|.
    \end{align*}
    Adding a subtracting $f(X^{2q}_1(x))-f(X^{2q+1}_1(x))$, applying Proposition~\ref{prop:preliminary_main_thm} together with dominated convergence, we obtain
    \begin{align*}
        \liminf_{q\to\infty} \mathbb{W}_1\big(Law(X^{\kappa_{2q}}_1(x)),Law(X^{\kappa_{2q+1}}_1(x))\big)
        & \gtrsim \ell_1 \liminf_{q\to\infty} |\E[f(X_1^{\kappa_{2q}}(x))-f(X^{\kappa_{2q+1}}_1(x))]|\\
        & =\ell_1 \liminf_{q\to\infty} |f(X_1^{2q}(x))-f(X^{2q+1}_1(x))| = \ell_1>0
    \end{align*}
    where the last equality follows from the properties~\eqref{eq:A_e}-\eqref{eq:A_o} and the construction of $f$.
\end{proof}

\section{Further consequences of Theorem~\ref{th:main_theorem}}\label{sec:consequences}
In this section we present some corollaries and variants of our construction. Specifically, in Section~\ref{subsec:density} we show that the velocity fields for which there is no unique selection in the vanishing noise limit are dense in suitable function spaces; Section~\ref{sec:vanishing_fractional_laplacian_PDE} is devoted to the proof of Corollary~\ref{cor:vanishing-viscosity-PDE}; finally, Section~\ref{subsec:autonomous} provides the construction of a time-independent velocity field $u$ for which selection does not occur in dimension $3$ or higher.

\subsection{Lack of selection for a dense set of velocity fields}\label{subsec:density}

Let $W$ be a stochastic process satisfying Assumption~\ref{ass:noise}. Recall that, for a given velocity field $u$, whenever it exists we denote by $(X^\kappa_t)_{t \geq 0}$ the unique stochastic flow associated to the SDE
\[
X^\kappa_t = x + \int_0^t u(r,X^\kappa_r) \dd r + \kappa W_t.
\]
We encode some of the key pathological properties satisfied by the velocity field constructed in Theorem~\ref{th:main_theorem} in the following definition.

\begin{definition}\label{defn:nonselection_property}
    Let $W$ satisfy Assumption~\ref{ass:noise} and let $u \in L^q([0,1]; C^\alpha(\T^d))$ with $(q,\alpha)$ satisfying the condition~\eqref{eq:condition_SWP_time_integrable} from Remark~\ref{rem:regbynoise_time_integrability} (so that $X^\kappa$ is well-defined for any $\kappa>0$); let $\eps\in (0,1)$.
    We say that $u$ has the \emph{$(1-\eps)$-non-selection property} if there exists $ A \subset \T^d$ with $| A| \geq 1 - \eps $ and a constant $c_\eps >0$ such that:
\begin{itemize}
    \item  there exists a sequence $\{ \kappa_q \}_{q}$ such that
    \begin{equation*}
        \P\left(  \omega\in \Omega: \liminf_{q\to\infty} \dist ( X_1^{\kappa_{2q}}( x, \omega), X_1^{\kappa_{2q+1}}( x, \omega))\geq c_\eps> 0\,, \text{ for a.e. } x \in A\right) =1\,,
    \end{equation*}
    \item for any $x \in  A$, there exist a subsequence $\{\kappa_{q'}\}_{q'}$ and points $y, y' \in \T^d$ such that
    $$ X_1^{\kappa_{2q'}} ( x,  \omega) \rightarrow y \neq y' \leftarrow X_1^{\kappa_{2q'+1}} ( x, \omega).\quad \text{for $\P$-a.e. } \omega\in\Omega.$$
\end{itemize}
\end{definition}

To show that the $(1-\eps)$-non-selection property is satisfied by many velocity fields, we will use the following simple rescaling idea. 
Let $[a,b]\subset [0,1]$ given and let $u$ be the velocity field 
defined in Section~\ref{sec:construction}.
Then we define ${u}^{a,b} : [a, b] \times \T^2 \to \R^2$ as:
\[
{u}^{a,b} (t, x) := \frac{1}{b-a}u\left (\frac{t-a}{b-a}, x \right ).
\]
Following similar computation as in Lemma~\ref{lemma:regularity-holder}, see~\eqref{eq:computation-holder-norm}, for any $\alpha < \frac{1}{\gamma+(1-\gamma)(1+ \delta)  }$, one can check that
$$\| u^{a,b} \|_{L^\infty_t C^\alpha_x} =  \frac{1}{b-a} \| u \|_{L^\infty_t C^\alpha_x} \lesssim \frac{1}{b-a} \sum\limits_{q \geq 0}  [a_q^{1/2} + a_q^{1-\left(\gamma+(1-\gamma)(1+ \delta) \right)\alpha}]  < \eps;$$
the last inequality holds true thanks to the super-exponential nature of $(a_q)_q$, up to choosing $a_0 \in (0,1)$ small enough, possibly depending on $b-a$. Hence, it is straightforward to conclude the following result.

\begin{corollary}
\label{cor:u-arbitrary-small}
Let $[a, b] \subset [0,1]$ be a given time interval. Then for any $\eta > 0$, $\alpha \in (0, 1)$ and $\eps>0$, there exists $v \in L^\infty([0, 1];C^{\alpha}(\T^2))$ satisfying the $(1-\eps)$-non-selection property, compactly supported on $[a,b] \times \T^2$ and such that
\[
\norm{v}_{L^\infty_t C^{\alpha}_x} \leq \eta.
\]
\end{corollary}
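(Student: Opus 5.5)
The plan is to take $v:=u^{a,b}$ extended by zero outside $[a,b]$, where $u$ is the field of Section~\ref{sec:construction} built with parameters $(\gamma,\delta,a_0)$. We choose $\delta$ small so that $\alpha<[\gamma+(1-\gamma)(1+\delta)]^{-1}$ (Lemma~\ref{lemma:regularity-holder}), and then $a_0$ small enough that three things hold at once: the rescaled bound displayed before the statement gives $\|v\|_{L^\infty_tC^\alpha_x}\le\eta$; the set $A$ has $|A|\ge1-\eps$ by Lemma~\ref{lem:lebesgue_measure_A}; and Convention~\ref{convention_parameters} holds. All these requirements only ask $a_0$ to be smaller, so they are compatible. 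Compact support in $[a,b]\times\T^2$ is immediate, since $u$ vanishes near $t=0$ and $t=1$ (recall $\eta_{0,i}$ is compactly supported in $J_{0,i}$). Hence $v$ is smooth in time and $v\in L^\infty_tC^\alpha_x$ with $\alpha>\alpha_W$, which places us in the setting of Definition~\ref{defn:nonselection_property}.

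To verify the non-selection property, first note that on $[0,a]$ the drift vanishes, so $X^{\kappa}_{a}(x)=x+\kappa W_a$. On $[b,1]$ the drift also vanishes, so $X^\kappa_1=X^\kappa_b+\kappa(W_1-W_b)$. The last increment tends to $0$ a.s. as $\kappa\to0$, so it is harmless. On $[a,b]$ we change time via $t=a+(b-a)s$, which turns the SDE with drift $v$ into the SDE driven by $u$ with noise $\kappa\,\widetilde W_s:=\kappa(W_{a+(b-a)s}-W_a)$.

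By self-similarity, $\widetilde W$ equals $(b-a)^{H}$ times an fBm, respectively $(b-a)^{1/\beta}$ times a $\beta$-stable process. We therefore rescale the noise sequence as $\tilde\kappa_q:=\kappa_q(b-a)^{-H}$ (resp. $(b-a)^{-1/\beta}$). The initial datum is now random, namely $x+\tilde\kappa_q(b-a)^HW_a$. It is, however, $\cF_a$-measurable and of size $O(\tilde\kappa_q q)$ on $\Omega_q$, and Proposition~\ref{prop:up_to_tq} explicitly allows $|y-x|<q\kappa_q\|\xi\|_\infty+\ell_q/(2q^2)$.

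We then rerun the proof of Proposition~\ref{prop:preliminary_main_thm} with $y$ in place of $x$. The pathwise parts (Propositions~\ref{prop:up_to_tq}--\ref{prop:stability_after_singularity}) go through because the constant factor $(b-a)^{\pm H}$ is absorbed by taking $a_0$ smaller in \eqref{eq:kappa-q-ell-q}. The sewing step uses Lemma~\ref{lem:lucio_v2} with $Z$ now containing the $\cF_{a}$-measurable shift, which is allowed since $Z$ only needs to be $\cF_S$-measurable, and $\widetilde W$ is again an $(\cF_{a+(b-a)s})$-fBm or stable process. The output is that $X^{\tilde\kappa_q}_1(x)-X^q_1(x)\to0$ a.s. for a.e.\ $x$. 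The rest then follows verbatim from \eqref{eq:A_e}--\eqref{eq:A_o}, as in the proof of Theorem~\ref{th:main_theorem}.

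The main obstacle is the non-Markov fBm case. There the noise on $[a,b]$ is not independent of the past, so it is not exactly an fBm started afresh. One has to check that the LND formula \eqref{eq:LND_fBm} and the $\cF_S$-measurability assumptions in Lemma~\ref{lem:lucio_v2} still apply with the conditional-mean drift $\E_s W_r$. Those assumptions only use increments $W_r$ conditioned on $\cF_s$ with $s\ge S$, and I expect this to suffice. The fallback is to note that $W_{a+(b-a)s}-W_a$ is itself an fBm with respect to the shifted filtration.
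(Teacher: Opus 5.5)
Your proposal is correct and follows the paper's route. You take the time-rescaled field $u^{a,b}$, use $\|u^{a,b}\|_{L^\infty_tC^\alpha_x}=\frac{1}{b-a}\|u\|_{L^\infty_tC^\alpha_x}$, and shrink $a_0$. You also spell out the inheritance of non-selection, which the paper only calls straightforward: zero drift outside $[a,b]$, the $|y-x|$ allowance in Proposition~\ref{prop:up_to_tq} for the random starting point at time $a$, and Lemma~\ref{lem:lucio_v2} on the rescaled intervals.

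Two small corrections are needed.
\begin{itemize}
\item For $H>1$ the shifted increments $W_{a+(b-a)s}-W_a$ are not a rescaled fBm, since integrated fBm does not have stationary increments, so your ``fallback'' claim is false in that range. Rely instead on your own observation that only the LND formula at conditioning times $\geq S$ is used. Alternatively, skip the noise time change entirely and apply Lemma~\ref{lem:lucio_v2} in original time with the original $W$.
\item If $\alpha\leq\alpha_W$, choose $\delta$ smaller so that $v\in C^{\alpha'}_x$ for some $\alpha'>\max\{\alpha,\alpha_W\}$. This is needed for the stochastic flow $X^\kappa$ to exist at all.
\end{itemize}
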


Since H\"older spaces $C^\alpha_x$ are not separable, let us introduce $\mathcal{C}^\alpha_x := \overline{C^\infty(\T^d)}^{C^\alpha_x}$, the completion of smooth functions with respect to the $C^\alpha_x$-norm.
Note that $\mathcal{C}^\alpha$ is separable and
\begin{align*}
    C^{\alpha'}\subsetneq \mathcal{C}^\alpha \subsetneq C^\alpha \quad\text{for any }0<\alpha'<\alpha<1.
\end{align*}
With these spaces at hand, we are now ready to state our density result (for simplicity, we only consider the case $d=2$, but the general one $d\geq 2$ is analogous).

\begin{corollary}
\label{cor:density-non-selection}
Let $W$ satisfy Assumption~\ref{ass:noise} and $\alpha_W$ be defined by~\eqref{eq:noise_regularity_threshold}; let $q\in [2,\infty)$  and $\alpha \in (\alpha_W,1)$.
Then the set of velocity fields which have the $(1-\eps)$-non-selection property (in the sense of Definition~\ref{defn:nonselection_property}) is dense in $L^q([0, 1];\mathcal{C}^{\alpha}(\T^2))$.
\end{corollary}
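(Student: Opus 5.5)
Given $w\in L^q([0,1];\mathcal{C}^\alpha(\T^2))$ and $\eta>0$, the plan is to produce a perturbation of $w$ within $\eta$ that has the $(1-\eps)$-non-selection property. The approach is to first approximate $w$ by a smooth field and then switch off that smooth field on a short time window, inside which we insert a rescaled copy of the construction from Section~\ref{sec:construction}.

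\textbf{Step 1: smooth approximation.} By definition of $\mathcal{C}^\alpha_x$ and density of smooth functions in Bochner spaces, pick $w_1\in C^\infty([0,1]\times\T^2)$ with $\|w-w_1\|_{L^q_tC^\alpha_x}\le\eta/3$. We may also assume $w_1$ divergence-free if $w$ is; otherwise, note that the definition of non-selection does not require incompressibility.

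\textbf{Step 2: cut a time window.} Fix a short interval $[a,b]$ and set $w_2:=w_1\one_{[0,1]\setminus[a,b]}$. Then $\|w_1-w_2\|_{L^q_tC^\alpha_x}\le (b-a)^{1/q}\|w_1\|_{L^\infty_tC^\alpha_x}\le\eta/3$ if $b-a$ is small; here $q<\infty$ is essential. The cut field is discontinuous in time but bounded, so it remains in the well-posedness class~\eqref{eq:condition_SWP_time_integrable}.

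\textbf{Step 3: insert the rescaled construction.} Apply Corollary~\ref{cor:u-arbitrary-small} on $[a,b]$ with Hölder exponent $\alpha'\in(\alpha,1)$, which also places $v$ in $\mathcal{C}^\alpha$. This gives $v$ supported in $[a,b]$ with $\|v\|_{L^\infty_tC^\alpha_x}\le\eta/3$, and we set $\tilde u:=w_2+v$. The parameters $\eps$ and $\eps'$ will be fixed in Step 4.

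\textbf{Step 4: transfer non-selection through the smooth flows.} Since $w_2$ is smooth off $[a,b]$, the dynamics of $\tilde u$ on $[0,a]$ and $[b,1]$ are SDEs with smooth drift. As $\kappa\to0$ these converge $\P$-a.s., uniformly in $x$, to the smooth deterministic flows $\Phi_{0,a}$ and $\Phi_{b,1}$; this uses the Grönwall estimate $|X^\kappa-\Phi|\lesssim\kappa\|W\|_\infty e^{\|\nabla w_1\|_\infty}$. The flows are measure-preserving diffeomorphisms if $w_1$ is divergence-free, and in any case they are diffeomorphisms with Jacobian bounded above and below.

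On $[a,b]$ the drift is just the rescaled $v$. Rescaling time by $b-a$ turns this segment into the original SDE with noise $\kappa(b-a)^{-\text{self-sim}}W$, where the self-similarity exponent is $H$ or $1/\beta$. We therefore choose the sequence $\kappa'_q:=\kappa_q(b-a)^{H}$ (respectively $\kappa_q(b-a)^{1/\beta}$), using the increment structure of $W$ on $[a,b]$ via stationarity of increments. Proposition~\ref{prop:preliminary_main_thm} and Proposition~\ref{prop:stability_after_singularity} are robust to perturbing the starting point by $o(\ell_q)$, because their neighbourhoods have size $\gtrsim\ell_q/q^2\gg\kappa_q$. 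Hence $X^{\kappa'_q}_{a,b}(X^{\kappa'_q}_{0,a}(x))$ is close to $X^q_1(\Phi_{0,a}(x))$ whenever $\Phi_{0,a}(x)$ lies in some $A^{(\bar q)}$ with a margin.

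Finally, apply $\Phi_{b,1}$, whose Lipschitz constant is bounded below. The separation $2\ell_1$ between the images of $F_e$ and $F_o$ then becomes a separation $c_\eps\ge 2\ell_1/\mathrm{Lip}(\Phi_{b,1}^{-1})$. The set is $A:=\Phi_{0,a}^{-1}(A')$, and since the Jacobian is bounded we get $|A|\ge 1-C\eps'$; choosing $\eps'$ small gives $|A|\ge 1-\eps$. The subsequence statement follows from compactness as before.

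\textbf{Main obstacle.} The hard step is Step 4. Proposition~\ref{prop:up_to_tq} and Proposition~\ref{prop:preliminary_main_thm} are stated for a deterministic starting point $x$, whereas here the starting point $X^{\kappa}_{0,a}(x,\omega)$ is random and only $O(\kappa)$-close to $\Phi_{0,a}(x)$. I would handle this by restarting the noise at time $a$, using the Markov property for Lévy noise or the LND property~\eqref{eq:LND_fBm} with $Z$ being $\cF_a$-measurable for fBm. This fits exactly the framework of Lemma~\ref{lem:lucio_v2}, which allows a random $\cF_S$-measurable $Z$. The pathwise Proposition~\ref{prop:up_to_tq} already accepts any $y$ in an $\ell_q/(2q^2)$-neighbourhood of $x$, and that neighbourhood contains the random starting point.
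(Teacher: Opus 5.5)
Your proposal is correct and follows the paper's argument. The steps match: approximate by a smooth field switched off on a short time window, and insert there the rescaled field from Corollary~\ref{cor:u-arbitrary-small}. Your choice of $\alpha'>\alpha$ to land in $\mathcal{C}^\alpha$, and your decoupling of the approximation error from the window length, are useful precisions over the paper, which uses the same $\eta$ for both. The smooth part is handled by Gr\"onwall and quasi-incompressibility. Then the dynamics restart at the window, where the random $O(\kappa)$-perturbed starting point is absorbed by the neighbourhood in Proposition~\ref{prop:up_to_tq} and by the $\cF_S$-measurable $Z$ in Lemma~\ref{lem:lucio_v2}. The paper places the window at $[1-\eta,1]$, so it does not need your final bi-Lipschitz transfer through $\Phi_{b,1}$.

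There are two harmless slips. First, rescaling time on the window gives noise intensity $\kappa(b-a)^{H}$ (resp.\ $\kappa(b-a)^{1/\beta}$), not $\kappa(b-a)^{-H}$. The matching sequence is therefore $\kappa_q(b-a)^{-H}$, and the argument is insensitive to constant factors in $\kappa_q$ anyway. Second, for fBm with $H>1$ the increments of $W$ on $[a,b]$ are not an fBm, so stationarity of increments fails. Instead, apply Lemma~\ref{lem:lucio_v2} directly in the original time variable; this only needs the LND property~\eqref{eq:LND_fBm}.
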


\begin{proof}
Let $v \in L^q([0, 1]; \mathcal{C}^\alpha(\T^d))$ and $\eta>0$ be given. Since $q< \infty$, the space $ L^q([0,1]; \mathcal{C}^\alpha(\T^d))$ is separable; in particular, for any $\eta >0$, one can find $v_\eta \in C^\infty ([0,1]\times \T^2) $ such that 
$$ \| v - v_\eta \|_{L^q_t C^\alpha_x} < \eta \qquad \text{and} \qquad  v_\eta \equiv 0 \quad \text{on } [1-\eta, 1] \times \T^2.$$
Let $\eps>0$ given as in the statement and set
\begin{equation}\label{eq:density_proof_eq2}
    C_\eta:= \exp (\| \nabla v_\eta \|_{L^1_t L^\infty_{x}} ), \qquad \tilde \eps:= \frac{\eps}{C_\eta}.
\end{equation}
Let $v$ be the velocity field given by Corollary~\ref{cor:u-arbitrary-small}, with $a= 1-\eta$, $b=1$ and $\eta$ as here, satisfying the $(1-\tilde \eps)$-non-selection property with associated set $\tilde A\subset \T^2$ and constant $ c_{\tilde \eps}$; define 
\begin{align*}
\overline{v}_\eta := \begin{cases}
v_\eta &\text{for }t \leq 1-\eta, \\
v & \text{for }t > 1-\eta.
\end{cases}
\end{align*}
By construction and triangular inequality, we have that
\begin{equation}\label{eq:density_proof_eq1}
    \|v - \bar v_{\eta} \|_{L^q_t C^\alpha_x}< 2\eta.
\end{equation}
Let $X_t(x)$ and $X_t^\kappa(x,\omega)$ denote respectively the Lagrangian and stochastic flows associated to $v_\eta$, which are well-defined thanks to its smoothness.

We claim that $\bar v_\eta$ has the $(1-\eps)$-non-selection property, with
\begin{equation*}
    A:= X_{1-\eta}^{-1}(\tilde A), \quad c_\eps:=c_{\tilde \eps};
\end{equation*}
combined with~\eqref{eq:density_proof_eq1}, this implies the desired conclusion.

First of all, since $v_\eta$ is smooth, it is quasi-incompressible with constant $C_\eta$ (cf.~\eqref{eq:density_proof_eq2}); by our choice of $\tilde \eps$, it holds that
\begin{align*}
    |A| = 1-|A^c|=1-|X_{1-\eta}^{-1}(\tilde A^c)| \geq 1 - C_\eta \tilde \eps = 1-\eps.
\end{align*}
Next, again thanks to the regularity of $v_\eta$, a pathwise application of Gr\"onwall's lemma yields
$$ \sup_{t\in [0,1-\eta]} |X_t (x) - X_t^{\kappa}(x,\omega)| \leq C_\eta \,\kappa\, \| W(\omega) \|_\infty.$$
We can now restart the dynamics at time $t=1-\eta$ and proceed as in the proof of Theorem~\ref{th:main_theorem}, working on the interval $[1-\eta,1]$ instead of $[0,1]$, where $\bar v_\eta=v$; the set $A$ was defined so that $X_{1-\eta}(x)\in \tilde A$ and $X^\kappa_{1-\eta}(x,\omega)$ gets arbitrarily close to it as $\kappa\ll 1$.
Specifically, applying Proposition~\ref{prop:up_to_tq}, Proposition~\ref{prop:stability_after_singularity}, and Lemma~\ref{lem:lucio_v2} as in the proof of Proposition~\ref{prop:preliminary_main_thm}, up to considering instead $\tilde\Omega_q = \{\omega : \|W(\omega)\|_\infty \leq (1+C_\eta)^{-1}q \}$, we obtain an analogue of estimate~\eqref{eq:stability_preliminary_main_thm}, for the same sequence $(\kappa_q)_q$ as therein. One can then go through the same argument as in Theorem~\ref{th:main_theorem}, which overall verifies the claim.
\end{proof}

\begin{remark}
    The same argument applies to other separable function spaces, for instance $L^q_t C^0_x$ or $L^q_t W^{s,p}_x$ with $s\in (0,1)$ and $p\in (1,\infty)$ (in both cases, under the assumption that $W$ is regularizing enough to guarantee strong well-posedness of the SDE at fixed $\kappa>0$); another example is $L^q_t \mathcal{C}^\alpha_{df}$, where $\mathcal{C}^\alpha_{df}$ denotes the collection of $f\in \mathcal{C}^\alpha_x$ such that $\nabla\cdot f=0$ in the sense of distributions.
\end{remark}

\begin{remark}
   For any $q\in [1,\infty)$, the set of velocity fields $u\in L^q([0,1];\mathcal{C}^\alpha_x)$ for which the ODE admits a unique, stable flow $X$ is residual in the sense of Baire, see the classical result by Orlicz~\cite{orlicz1932theorie} and more recently~\cite{cianfrocca2025some}; it's easy to check that such $u$ have the selection property, in the sense that $X_t(x)$ corresponds to the unique strong pointwise limit of the stochastic flows $X^\kappa_t(x)$, as $\kappa\to 0^+$.
   Therefore, one cannot expect to obtain any stronger statement than density in Corollary~\ref{cor:density-non-selection}; in particular, the set of velocity fields $u\in L^q([0,1];\mathcal{C}^\alpha_x)$ with the non-selection property is \emph{not} residual.
\end{remark}

\subsection{Non-selection of solutions to transport PDEs by vanishing fractional viscosity}
\label{sec:vanishing_fractional_laplacian_PDE}

We complete here the

\begin{proof}[Proof of Corollary~\ref{cor:vanishing-viscosity-PDE}]
Fix $\alpha\in (\alpha_W,1)$, $\eps>0$ and let $A$, $u$ be given as in Theorem~\ref{th:main_theorem}. In this case, it is convenient to work with the time-reversed velocity field
\begin{equation*}
    v(t,x):=-u(1-t,x)\quad\forall\, t\in [0,1],\, x\in\T^2.
\end{equation*}
Indeed, since $\beta$-stable Lèvy processes have a law which is invariant under time reversal, denoting by $X^\kappa$ the stochastic flow associated to $u$ and by $\tilde X^\kappa$ the one associated to $v$, one has
\begin{equation}\label{eq:time_reversed_flow_law}
    Law\big( (\tilde{X}^\kappa_{0,1})^{-1}(x)\big)=Law\big(X^\kappa_{0,1}(x))\quad\forall\,\kappa>0,\,x\in\T^2.
\end{equation}
Recall the sets $F_e$ and $F_o$ defined by~\eqref{eq:restricted:chessboards}; let $\rho_{in}$ be a smooth, non-negative function such that $\rho_{in}\equiv 1$ on $F_e$ and $\rho_{in}\equiv 0$ on $F_o$.
Let $\rho^\kappa$ denote the associated solutions to the PDE~\eqref{eq:PDE_fractional_laplacian}.

In light of the divergence-free property of $v$, the representation~\eqref{eq:FP_representation_formula2} and routine PDE arguments, one can deduce that:
\begin{itemize}
    \item $(\rho^\kappa)_{\kappa>0}$ is bounded in $L^\infty_{t,x}$ and $(\partial_t\rho^\kappa)_{\kappa>0}$ is bounded in $L^\infty([0,1];C^{-2}_x)$;
    \item The sequence $(\rho^\kappa)_{\kappa>0}$ is precompact in $L^\infty_{t,x}$, endowed with the weak-$\ast$ topology, as well as in $C([0,1];C^{-\gamma}_x)$, for any $\gamma>0$.
    \item Any limit point of $(\rho^\kappa)_{\kappa>0}$ is a weak solution $\rho\in L^\infty_{t,x}\cap C([0,1];C^{-\gamma}_x)$ to the transport equation~\eqref{eq:transport_PDE} with initial condition $\rho_{in}$. Moreover, one can extract a subsequence from $(\rho^\kappa)_{\kappa>0}$ with the additional property that $\rho^{\kappa_{q}}_t\rightharpoonup\rho_t$ weakly in $L^2_x$ for every $t\in [0,1]$.
\end{itemize}
We now apply the aforementioned facts to the sequence of couples $(\rho^{\kappa_{2q}},\rho^{\kappa_{2q+1}})$, therefore finding a (not relabelled for simplicity) subsequence such that
\begin{equation*}
    (\rho^{\kappa_{2q}},\rho^{\kappa_{2q+1}})\overset{\ast}{\rightharpoonup} (\rho,\bar\rho) \text{ in } (L^\infty_{t,x})^2, \quad (\rho^{\kappa_{2q}}_t,\rho^{\kappa_{2q+1}}_t)\rightharpoonup (\rho_t, \bar\rho_t) \text{ in } L^2_x \text{ for every }t\in [0,1].
\end{equation*}
In order to deduce that $\rho\neq \bar\rho$, it suffices to show the existence of $f\in L^2_x$ such that
\begin{equation*}
    \int_{\T^2} f(x) \big(\rho_1(x)-\bar\rho_1(x)\big) \dd x = \lim_{q\to\infty} \int_{\T^2} f(x) \big(\rho^{\kappa_{2q}}_1(x)-\bar\rho^{\kappa_{2q+1}}_1(x)\big) \dd x \neq 0.
\end{equation*}
To this end, take $f:=\mathbbm{1}_{A\cap \mathcal{C}^{\even}_{a_0}}$; combining the representation formula~\eqref{eq:FP_representation_formula2}, the equality of laws~\eqref{eq:time_reversed_flow_law}, Proposition~\ref{prop:preliminary_main_thm} and dominated convergence arguments, we see that
\begin{align*}
    \lim_{q\to\infty} \int_{\T^2} f(x) \rho^{\kappa_{2q}}_1(x) \dd x
    & = \lim_{q\to\infty} \int_{\T^2} f(x) \E\big[\rho_{in}   \big((\tilde X^{\kappa_{2q}}_{0,1})^{-1}(x)\big)\big] \dd x\\
    & = \lim_{q\to\infty} \int_{\T^2} f(x) \E\big[\rho_{in}   \big( X^{\kappa_{2q}}_1(x)\big)\big] \dd x\\
    & = \lim_{q\to\infty} \int_{\T^2} f(x) \E\big[\rho_{in}   \big( X^{2q}_1(x)\big)\big] \dd x
    = |A\cap \mathcal{C}^{even}_{a_0}|
\end{align*}
where in the last passage we used~\eqref{eq:A_e} and the choice of $\rho_{in}$. Arguing similarly and using~\eqref{eq:A_o} we see that
\begin{align*}
    \lim_{q\to\infty} \int_{\T^2} f(x) \rho^{\kappa_{2q+1}}_1(x) \dd x = 0
\end{align*}
which overall allows to conclude that $\rho\neq \bar\rho$.

Finally, lack of anomalous dissipation of energy follows from the Fluctuation-Dissipation Relation~\eqref{eq:fluctuation_dissipation}. Indeed, for any $q$ we have that
\begin{align*}
    \kappa_q ^\beta\int_0^t \big\| |\nabla|^\beta \rho^{\kappa_q}_s\big\|_{L^2}^2 \dd s = \frac12\int_{\T^d} Var\big[ \rho_{in}((X^{\kappa_q}_{0,t})^{-1}(x))\big] \dd x.
\end{align*}
By Corollary~\ref{cor:vanishing_variance}, the integrand converges to zero
for a.e. $x\in\mathbb T^2$, and by properties of the variance it is bounded by
$\|\rho_{in}\|_{L^\infty}^2$; therefore the conclusion follows by dominated convergence.
\end{proof}

\subsection{Autonomous velocity field for $d\geq 3$}\label{subsec:autonomous}

The goal of this section is to present an analogue of Theorem~\ref{th:main_theorem} with an autonomous velocity field $\bar u \in C^\alpha (\T^3;\R^3)$, by treating time as the third variable (the extension to $d\geq 4$ is then immediate).

To this end, we need a few preparations. In what follows, we will adopt the notation $\bar x= (x,\tilde x)$ with $\bar x\in \T^3$, $x\in \T^2$ and $\tilde x\in \T$ (the latter being the new component that takes the role of time); similarly, for stochastic processes we write $\bar X=(X,\tilde X)$, $\bar W=(W,\tilde W)$ and so on.

Given $\eps\in (0,1)$ and $\alpha\in (0,1)$, let $\eps_1$, $\eps_2>0$ be such that $(1-\eps_1)(1-\eps_2)\geq 1-\eps$; let $u:(1-\eps_2,1)\times\T^2\to \R^2$ be the compactly supported velocity field given by Corollary~\ref{cor:u-arbitrary-small}, for given $\eps_1,\alpha$ as in Theorem~\ref{th:main_theorem}. Let $A\subset \T^2$ be the set given by Theorem~\ref{th:main_theorem}, such that $|A|\geq 1-\eps_1$, and set
\begin{equation}\label{eq:defn_A_autonomous}
    \bar A := A\times [0,1-\eps_2]\subset \T^3.
\end{equation}
Let us extend $u$ first to $[0,1]\times\T^2$ by setting $u\equiv 0$ outside $(1-\eps_2,1)\times\T^2$, and then periodically to $\T^3$. Finally, define $\bar u\in C^\alpha(\T^3;\R^3)$ by
\begin{equation}\label{defn:autonomous_candidate}
    \bar{u}(x,\tilde x) := (u(\tilde x, x), 1).
\end{equation}
We claim that $\bar u$ is the desired autonomous velocity field; see Theorem~\ref{thm:autonomous_case} for the precise statement. To prove it, we need some additional non-trivial arguments, compared to the proof of Theorem~\ref{th:main_theorem}.

Let $\bar W=(W,\tilde W)$ be either a 3-dimensional fBm or $\beta$-stable process, so that $W$ and $\tilde W$ are independent (respectively 2- and 1-dimensional) processes of the same type.
We may assume $\bar W$ to be defined on a canonical space $(\bar\Omega,\bar\P)$ with $\bar\Omega=\Omega\times \tilde\Omega$, $\bar\P=\P\otimes\tilde\P$, where $\P$ and $\tilde \P$ denote respectively the laws of $W$ and $\tilde W$.
Let $\bar x\in\T^3$ and let $\bar X^\kappa=(X^\kappa,\tilde X^\kappa)$ be the unique solution to the SDE\footnote{As usual, we assume $\alpha\in (0,1)$ to be large enough in function of $H$ (resp. $\beta$) so that Theorem~\ref{thm:SWP_fBm} (resp. Theorem~\ref{thm:SWP_stable}) applies.}
\begin{equation}\label{eq:3d_SDE}
    \bar X^\kappa_t = \bar x + \int_0^t \bar u(\bar X^\kappa_s) \dd s + \kappa \bar W_t.
\end{equation}
By the definition~\eqref{defn:autonomous_candidate}, one has $\tilde X^\kappa_t=\tilde x + t +\kappa \tilde W_t$ thus yielding
\begin{equation}\label{eq:3d_reduced_SDE}
    X^\kappa_t
    = x + \int_0^t u(\tilde x + s+ \kappa \tilde W_s,X^\kappa_s) \dd s + \kappa W_t
    = x + \int_0^t u(\tilde x + s,X^\kappa_s) \dd s + \kappa Z^\kappa_t
\end{equation}
for
\begin{equation}\label{eq:3d_defn_psi}
    Z^\kappa_t:= W_t + \psi^\kappa_t, \quad \psi^\kappa_t:= \frac{1}{\kappa} \int_0^t \big[u(\tilde x + s+\kappa \tilde W_s, X^\kappa_s)-u(\tilde x + s, X^\kappa_s)\big] \dd s.
\end{equation}
The SDE~\eqref{eq:3d_reduced_SDE} has now the same structure of the original one considered in the main body of the paper, up to replacing $W$ with $Z^\kappa=W+\psi^\kappa$. The key point is to obtain sufficiently nice, uniform-in-$\kappa$ estimates, in order to show that $Z^\kappa$ has the same ``regularizing features'' as $W$, so to apply the stochastic estimates from Section~\ref{sec:stochastic_estimates}. This is the content of the next two lemmas.

Throughout this section, we take $(\cF_t)_{t\geq 0}$ to be the (standard augmentation of the) filtration given by $\cF_t:=\sigma((W_s)_{s\leq t}, (\tilde W_r)_{r\in[0,1]})$. Notice that $\tilde W$ is $\cF_0$-adapted, and therefore whenever conditioning on $\cF_t$, $\tilde W$ can be treated as a deterministic path.
Accordingly, we consider the quantities $\llbracket \cdot \rrbracket_\varsigma$ as defined in~\eqref{eq:slowly_varying_process}, but we allow them to be {\em random constants} depending on $\tilde W$; the $L^\infty(\Omega)$-norm appearing in~\eqref{eq:slowly_varying_process} must then be understood with respect to the $\mathbb{P}$, the law of $W$, and not the full law $\bar{\mathbb{P}}=\mathbb{P}\otimes \tilde{\mathbb{P}}$.

\begin{lemma}\label{lem:autonomous_apriori1}
    Let $\bar x\in\T^3$ and let $\bar X^\kappa$, $X^\kappa$, $Z^\kappa$ be given by~\eqref{eq:3d_SDE}-\eqref{eq:3d_reduced_SDE}-\eqref{eq:3d_defn_psi}. Then we have the pathwise estimate
    \begin{equation}\label{eq:pointwise_estimate_Z}
        \sup_{\kappa\in (0,1],t\in [0,1]} |Z^\kappa_t(\bar\omega)| \lesssim \sup_{t\in [0,1]} |\bar W_t(\bar\omega)| \quad\forall\,\bar\omega\in\bar\Omega
    \end{equation}
    where the hidden constant does not depend on $\bar x\in \T^3$. Moreover there exists a deterministic constant $C>0$ such that:
    \begin{enumerate}[label=(\roman*), ref=\roman*]
    \item If $\bar W$ is an fBm with $H\in (0,+\infty)\setminus \N$, then $\sup_{\kappa\in (0,1]} \llbracket X^\kappa \rrbracket_{H}\leq C$.
    \item If $\bar W$ is a $\beta$-stable process with $\beta\in (1,2)$, then $\sup_{\kappa\in (0,1]} \llbracket X^\kappa \rrbracket_{1/\beta}\leq C$.
\end{enumerate}
\end{lemma}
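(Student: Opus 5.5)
The plan is to prove the pathwise bound~\eqref{eq:pointwise_estimate_Z} using only the time regularity of $u$. Then, for the bounds on $\llbracket X^\kappa\rrbracket$, I would write $X_t-\E_s X_t$ as a drift fluctuation plus a noise fluctuation and control each directly, adding a bootstrap in the case $H>1$.

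\textbf{Step 1: pathwise bound on $Z^\kappa$.} By Corollary~\ref{cor:u-arbitrary-small} and Lemma~\ref{lemma:regularity-holder}, $u$ is smooth in time with values in $C^\alpha_x$ and compactly supported in $(1-\eps_2,1)$. Hence its periodic extension in $\tilde x$ satisfies $\|\partial_t u\|_{L^\infty_{t,x}}=:L<\infty$. By the mean value theorem in the time variable,
\[
|\psi^\kappa_t|\leq \frac1\kappa\int_0^t \big|u(\tilde x+s+\kappa\tilde W_s,X^\kappa_s)-u(\tilde x+s,X^\kappa_s)\big|\dd s\leq L\sup_{s\in[0,1]}|\tilde W_s|.
\]
This gives $|Z^\kappa_t|\leq |W_t|+L\sup_s|\tilde W_s|\lesssim \sup_s|\bar W_s|$, uniformly in $\kappa$ and $\bar x$.

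\textbf{Step 2: decomposition.} For $s\leq t$ write $X^\kappa_t=X^\kappa_s+D_{s,t}+\kappa(W_t-W_s)$, with $D_{s,t}=\int_s^t u(\tilde x+r+\kappa\tilde W_r,X^\kappa_r)\dd r$. Since $\tilde W$ is $\cF_0$-measurable, it is frozen under $\E_s$. This yields
\[
\E_s|X^\kappa_t-\E_sX^\kappa_t|\leq \E_s|D_{s,t}-\E_sD_{s,t}|+\kappa\,\E_s|W_t-\E_sW_t|.
\]
The noise term is handled as follows.

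For fBm, $W_t-\E_sW_t$ is a centered Gaussian vector independent of $\cF_s$, because $\tilde W$ is independent of $W$. Its covariance is $\lesssim|t-s|^{2H}$ for any $H\in(0,\infty)\setminus\N$: this follows from self-similarity and the recursive definition of fBm, and is equivalent to the LND~\eqref{eq:LND_fBm}. Hence this term is $\lesssim|t-s|^H$.

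For a $\beta$-stable process with $\beta\in(1,2)$, the mean is finite and zero, so $\E_sW_t=W_s$. Independence of increments and~\eqref{eq:moments_stable} then give $\E_s|W_t-W_s|=\E|W_{t-s}|\lesssim|t-s|^{1/\beta}$.

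The crude drift bound is $\E_s|D-\E_sD|\leq 2\|u\|_{L^\infty}|t-s|$. Since $|t-s|\leq 1$, this already concludes case (ii), because $1/\beta<1$. It also concludes case (i) whenever $H<1$.

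\textbf{Step 3: bootstrap for $H>1$ (main obstacle).} Here the crude bound $|t-s|$ is not good enough, and I would use the spatial H\"older regularity of $u$. Set
\[
\phi(h):=\sup_{0\leq s\leq r\leq s+h}\big\|\E_s|X^\kappa_r-\E_sX^\kappa_r|\big\|_{L^\infty}.
\]
Using $\E_s|Y-\E_sY|\leq 2\E_s|Y-c|$ for $\cF_s$-measurable $c$, applied with $c=u(\cdot,\E_sX_r)$, and then conditional Jensen, I get
\[
\E_s|D_{s,t}-\E_sD_{s,t}|\leq 2\|u\|_{L^\infty_tC^\alpha_x}\int_s^t(\E_s|X_r-\E_sX_r|)^\alpha\dd r\lesssim |t-s|\,\phi(t-s)^\alpha .
\]
Hence $\phi(h)\leq C(h^{1+\alpha a}+h^H)$ whenever $\phi(h)\leq C'h^a$, with constants uniform in $\kappa\in(0,1]$. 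Starting from $a_0=1$, the iteration $a_{n+1}=\min\{H,1+\alpha a_n\}$ reaches $H$ after finitely many steps. The reason is that the fixed point $1/(1-\alpha)$ exceeds $H$, which holds since $\alpha>\alpha_W=1-\frac1{2H}>1-\frac1H$. This gives $\sup_\kappa\llbracket X^\kappa\rrbracket_H\leq C$.

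The delicate points are: keeping all constants deterministic and independent of $\tilde W$, which works because $u$ is bounded in $C^\alpha_x$ uniformly in the time argument; and checking that conditional Jensen is legitimate inside the $\sup$ over $r$.
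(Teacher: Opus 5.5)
Your proof is correct. Step 1 coincides with the paper's argument; for the bounds on $\llbracket X^\kappa\rrbracket$ you take a genuinely different route.

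\textbf{The paper's route.} It uses one argument for both cases. It compares $X^\kappa_t$ with the $\cF_s$-measurable quantity $X^\kappa_s+\int_s^t u(\tilde x+r+\kappa\tilde W_r,\E_sX^\kappa_r)\dd r$. Then the $C^\alpha$-continuity of $u$ and conditional Jensen give $\llbracket X^\kappa\rrbracket\le C(\llbracket X^\kappa\rrbracket^\alpha+1)$, and Young's inequality absorbs the right-hand side. This absorption only works if $\llbracket X^\kappa\rrbracket_\varsigma<\infty$ is known a priori for each fixed $\kappa$. The paper does not prove this finiteness; it imports it from the literature.

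\textbf{Your route.} You observe that the trivial bound $2\|u\|_{L^\infty}|t-s|$ on the drift fluctuation already settles case (ii) and case (i) with $H<1$. This uses no spatial regularity of $u$ and no condition on $\alpha$. You invoke the H\"older/Jensen estimate only for $H>1$, where you run a finite bootstrap on the exponent, starting from that crude bound. Since $\phi$ is finite from the outset, your iteration establishes finiteness and the uniform-in-$\kappa$ bound at the same time.

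\textbf{Comparison.} Your argument is self-contained. The absorption argument, by contrast, is circular without the external finiteness input, and for $H>1$ that input is exactly the non-trivial part. Both routes rest on the same numerology: the paper needs $1+\alpha H-H\ge 0$, and you need the fixed point $1/(1-\alpha)$ to exceed $H$. Both are guaranteed by $\alpha>1-\frac{1}{2H}$.

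\textbf{Centering of the noise.} Centering at $\E_sW_t$ rather than $W_s$ is the right choice for fBm. There $\E_s(W_t-W_s)$ is a non-degenerate $\cF_s$-measurable Gaussian for $H\neq\frac12$, so $\E_s|W_t-W_s|$ is not essentially bounded. Your centered version $\E|W_t-\E_sW_t|\lesssim|t-s|^H$ is deterministic. The paper writes out only the stable case, centered at $W_s$, and leaves this modification implicit in ``the other one being similar''.
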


\begin{proof}
    By Lemma~\ref{lemma:regularity-holder}, $u$ is Lipschitz continuous in its first entry, therefore we have the pointwise-in-$\omega$ estimate
        \begin{align*}
            \sup_{t\in [0,1]} | \psi^\kappa_t|
            \leq  \frac{1}{\kappa} \int_0^1 |u(\tilde x + s+\kappa \tilde W_s, X^\kappa_s)-u(\tilde x + s, X^\kappa_s)| \dd s
            \leq \|\partial_t u\|_{C^0} \sup_{t\in [0,1]} |\tilde W_t|
    \end{align*}
    uniformly in $\kappa$; from this, estimate~\eqref{eq:pointwise_estimate_Z} readily follows.

    For the second part of the statement, similar arguments to those in the works~\cite{Gerencser2023,GalGer2025,BDG2025} show that, for fixed $\kappa>0$, one has $\llbracket X^\kappa \rrbracket_\varsigma<\infty$; so let us only show how to get a closed, uniform-in-$\kappa$ estimate for $\llbracket X^\kappa \rrbracket_\varsigma$ as a consequence of its finiteness.
    We only give the proof in the case of $\bar W$ being a $\beta$-stable process, the other one being similar (and to some extent simpler, since fBm admits moments of any order).
    By the properties of conditional expectation (cf.~\cite{Gerencser2023,GalGer2025}) and the definition of $(\cF_t)_t$, we have
    \begin{align*}
        \E_s|X^\kappa_t-\E_s X^\kappa_t|
        &\leq 2 \E_s\left|X^\kappa_t-X^\kappa_s-\int_s^t u(\tilde x + r+ \kappa \tilde W_r,\E_s X^\kappa_r) \dd r\right|\\
        & = 2 \E_s\left|\int_s^t [u(\tilde x + r+ \kappa \tilde W_r, X^\kappa_r) - u(\tilde x + r+ \kappa \tilde W_r,\E_s X^\kappa_r)] \dd r + \kappa (W_t-W_s)\right|\\
        & \lesssim \int_s^t \| u\|_{C^\alpha(\T^3)} \E_s\big[|X^\kappa_r-\E_s X^\kappa_r|^\alpha\big] \dd r + \kappa \E_s|W_t-W_s|\\
        & \lesssim \| u\|_{C^\alpha(\T^3)} \int_s^t (\E_s|X^\kappa_r-\E_s X^\kappa_r|)^\alpha \dd r + \E|W_t-W_s|\\
        & \lesssim \| u\|_{C^\alpha(\T^3)} \llbracket X^\kappa \rrbracket_{1/\beta}^\alpha |t-s|^{1+\frac{\alpha}{\beta}} + |t-s|^{1/\beta}.
    \end{align*}
    In the above, we used the facts that $W$ is independent of $\tilde W$ and thus an $(\cF_t)$-$\beta$ stable process, as well as condition $\beta>1$ to ensure finiteness of $\E|W_t-W_s|$ (cf.~\eqref{eq:moments_stable}); $\E|W_t-W_s|\sim |t-s|^{1/\beta}$ then follows by self-similarity of $W$.
    Taking the $L^\infty(\Omega)$-norm in the above estimate and dividing both sides by $|t-s|^{1/\beta}$, using that $\alpha>1-\beta$ and so $|t-s|^{1+\frac{\alpha-1}{\beta}}\leq 1$, one arrives at
    \begin{align*}
        \llbracket X^\kappa \rrbracket_{1/\beta}
        \leq C( \llbracket X^\kappa \rrbracket_{1/\beta}^\alpha + 1)
        \leq \alpha \llbracket X^\kappa \rrbracket_{1/\beta} + (1-\alpha) C^{\frac{1}{1-\alpha}}+C
    \end{align*}
    from which the conclusion follows after rearranging.
\end{proof}

\begin{remark}
    Note that, compared to Theorem~\ref{th:main_theorem}, Lemma~\ref{lem:autonomous_apriori1} requires the additional constraint $\beta>1$. As seen in the proof, this is needed to ensure that $\E[\| \bar W^\beta\|_\infty]<+\infty$, cf.~\eqref{eq:moments_stable}.
    Refining the arguments in the style of~\cite{BDG2025}, it might be possible to obtain useful a priori estimates in a larger range of parameters, plausibly up to $\beta>1/2$ (assuming that all the remaining steps below can still be appropriately readapted). We do not know however whether one should expect it to be possible to cover the whole range $\beta\in (0,2)$.
\end{remark}

\begin{lemma}\label{lem:autonomous_apriori2}
    Let $\bar x$, $\bar X^\kappa$, $X^\kappa$, $Z^\kappa$ be as in Lemma~\ref{lem:autonomous_apriori1}.
    Then it holds that
    \begin{equation}\label{eq:autonomous_apriori2}
        \sup_{\tilde x\in \T, \kappa\in (0,1]} \llbracket \psi^\kappa \rrbracket_\varsigma \lesssim \sup_{t\in [0,1]} |\tilde W_t|
    \end{equation}
    for the choice $\varsigma=1+\alpha H$ when $\bar W$ is an fBm with $H\in (0,+\infty)\setminus\N$ and $\varsigma=1+\frac{\alpha}{\beta}$ when $\bar W$ is a $\beta$-stable process with $\beta \in (1,2)$.
\end{lemma}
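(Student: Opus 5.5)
We need to bound $\E_s|\psi^\kappa_t-\E_s\psi^\kappa_t|\le C|t-s|^\varsigma$, uniformly in $\kappa$ and $\tilde x$. The approach is a Taylor expansion of $u$ in its time entry, combined with the a priori bound on $\llbracket X^\kappa\rrbracket$ from Lemma~\ref{lem:autonomous_apriori1}.

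Recall that $\tilde W$ is $\cF_0$-measurable, so conditionally on $\cF_s$ it is deterministic. Set
\[
g_r(y):=\frac1\kappa\big[u(\tilde x+r+\kappa\tilde W_r,y)-u(\tilde x+r,y)\big].
\]
This is an $\cF_0$-measurable family of functions. By the fundamental theorem of calculus in time,
\[
g_r(y)=\tilde W_r\int_0^1 \partial_t u(\tilde x+r+\lambda\kappa\tilde W_r,y)\dd\lambda .
\]
Lemma~\ref{lemma:regularity-holder} gives $u\in C^1([0,1];C^\alpha(\T^2))$, and the construction of $u$ preserves this after the rescaling of Corollary~\ref{cor:u-arbitrary-small} and the periodic extension. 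Hence
\[
\sup_r\|g_r\|_{C^\alpha_x}\le \|\partial_tu\|_{C^0_tC^\alpha_x}\sup_r|\tilde W_r|,
\]
uniformly in $\kappa\in(0,1]$ and $\tilde x$. This is the key observation: $\kappa^{-1}$ is absorbed by the time-Lipschitz property, while spatial H\"older regularity is retained.

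Next we write $\psi^\kappa_t-\psi^\kappa_s=\int_s^t g_r(X^\kappa_r)\dd r$. As in the proof of Lemma~\ref{lem:autonomous_apriori1}, we use the inequality $\E_s|Y-\E_sY|\le 2\E_s|Y-V|$, valid for any $\cF_s$-measurable $V$. We take $Y=\psi^\kappa_t$ and $V=\psi^\kappa_s+\int_s^t g_r(\E_sX^\kappa_r)\dd r$, which is $\cF_s$-measurable since $g$ is $\cF_0$-measurable. This gives
\[
\E_s|\psi^\kappa_t-\E_s\psi^\kappa_t|\le 2\int_s^t \|g_r\|_{C^\alpha_x}\,\E_s|X^\kappa_r-\E_sX^\kappa_r|^\alpha\dd r\le 2\sup_r\|g_r\|_{C^\alpha}\int_s^t\big(\E_s|X^\kappa_r-\E_sX^\kappa_r|\big)^\alpha\dd r,
\]
where the last step uses conditional Jensen, since $\alpha<1$. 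Applying Lemma~\ref{lem:autonomous_apriori1}, the inner quantity is at most $C|r-s|^{H}$ in the fBm case and $C|r-s|^{1/\beta}$ in the stable case. Integrating over $[s,t]$ yields $|t-s|^{1+\alpha H}$, respectively $|t-s|^{1+\alpha/\beta}$, with prefactor $\lesssim\sup_t|\tilde W_t|$. Finally we take the $L^\infty(\Omega)$ norm with respect to $\P$ only, $\tilde W$ being frozen, to conclude~\eqref{eq:autonomous_apriori2}.

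The main point to check is the uniformity of the a priori bound $\llbracket X^\kappa\rrbracket$ in $\tilde x$ and $\kappa$, and that it is deterministic rather than depending on $\tilde W$. The proof of Lemma~\ref{lem:autonomous_apriori1} only used $\|u\|_{C^\alpha}$ and moments of $W$, so it should hold uniformly. A second, minor point arises for fBm with $H>1$, where $\varsigma=1+\alpha H$ exceeds the regularity of a single increment. No issue appears there, however, because the estimate only requires the $H$-bound on $X^\kappa$ from Lemma~\ref{lem:autonomous_apriori1}.
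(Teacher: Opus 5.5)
Your proposal is correct and follows essentially the paper's argument. The Taylor expansion in the time entry absorbs $\kappa^{-1}$ into $\tilde W_r\int_0^1\partial_t u\,\dd\lambda$. This is compared with the $\cF_s$-measurable proxy obtained by replacing $X^\kappa_r$ with $\E_s X^\kappa_r$, and the estimate closes via conditional Jensen and the uniform bound on $\llbracket X^\kappa\rrbracket$ from Lemma~\ref{lem:autonomous_apriori1}. Your version is slightly cleaner: it only uses $\partial_t u\in C^0_tC^\alpha_x$, and it keeps $\partial_t$ on both terms of the difference, which the paper's corresponding display drops in the second term.
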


\begin{proof}
    Let us only consider the case when $\bar W$ is an fBm, the other one being similar. By Lemma~\ref{lemma:regularity-holder} and Taylor expansion, we have
    \begin{align*}
        \psi^\kappa_t
        = \int_0^t \tilde W_s \int_0^1 \partial_t u(\tilde x + s+\lambda \kappa \tilde W_s, X^\kappa_s) \dd \lambda \dd s.
    \end{align*}
    Let us set $u^{\tilde x}(t,x):=u(\tilde x + t,x)$ in the following.
    By the definition of $(\cF_t)_t$, properties of conditional expectation and the regularity $\partial_t u \in C^\alpha(\T^3)$, we find
    \begin{align*}
        \|\E_s |\psi^\kappa_t -\E_s \psi^\kappa_t|\|_{L^\infty(\Omega)}
        & \leq 2\left\|\E_s \Big|\psi^\kappa_t - \psi^\kappa_s-\int_s^t \tilde W_r \int_0^1 \partial_t u^{\tilde x}(r+\lambda \kappa \tilde W_r, \E_s X^\kappa_r) \dd \lambda \dd r\Big | \right\|_{L^\infty(\Omega)}\\
        &=2 \bigg\|\E_s \Big|\int_s^t \tilde W_r \int_0^1 \partial_t u^{\tilde x}(r+\lambda \kappa \tilde W_r, X^\kappa_r)-u^{\tilde x}(r+\lambda \kappa \tilde W_r, \E_s X^\kappa_r)  \dd \lambda \dd r\Big|\bigg\|_{L^\infty(\Omega)}\\
        & \lesssim \| \partial_t u^{\tilde x}\|_{C^\alpha(\T^3)} \sup_{t\in [0,1]} |\tilde W_t| \, \int_s^t \int_0^1 \|\E_s[ |X^\kappa_r-\E_s X^\kappa_r|^\alpha] \|_{L^\infty(\Omega)} \dd \lambda \dd r\\
        & \lesssim \| \partial_t u\|_{C^\alpha(\T^3)} \sup_{t\in [0,1]} |\tilde W_t| |t-s|^{1+\alpha H} \llbracket X^\kappa\rrbracket^\alpha_{H}.
    \end{align*}
    Dividing by $|t-s|^{1+\alpha H}$, taking supremum on both sides and applying Lemma~\ref{lem:autonomous_apriori1}, conclusion~\eqref{eq:autonomous_apriori2} follows.
\end{proof}

Recall the definition of the autonomous velocity field $\overline{u}: \T^3 \to \R^3$ given in~\eqref{defn:autonomous_candidate} and similarly consider 
$$\overline{u}_q (x, \tilde x):= (u_q (\tilde x, x), 1) \,.$$
We denote by  $\bar X^q_t(x)$ the associated deterministic flows of $\bar{u}_q$.
Let us also recall that  the velocity field $u$ has been rescaled so that by Corollary~\ref{cor:u-arbitrary-small} 
$$u \equiv 0 \quad \text{ on } (0,1 -\eps_2) \times \T^2 \qquad \supp (u) \subset (1 -\eps_2, 1) \times \T^2 \,. $$

 \begin{theorem}\label{thm:autonomous_case}
    For any $\alpha \in (0,1)$ and $\eps \in (0,1)$ there exist a divergence-free $u \in C^\alpha (\T^3)$, a set $\bar A \subset \T^3$ with $|\bar A| \geq 1 - \eps $, a subsequence $\{ \kappa_q \}_{q}$ and a constant $c_\eps >0$ such that the following holds true: for  every $\bar x\in\bar A$, it holds that
    \begin{equation*}
        \bar\P\left( \bar\omega\in\bar\Omega: \liminf_{q\to\infty} \dist (\bar X_1^{\kappa_{2q}}(\bar x,\bar \omega),\bar X_1^{\kappa_{2q+1}}(\bar x,\bar \omega))\geq c_\eps> 0\right) =1.
    \end{equation*}
    Furthermore, for any $x \in \bar A$ there exist a subsequence $\{\kappa_q\}_q$, points $y, y' \in \T^3$ such that for a.e. $\omega$ it holds that
    $$ \bar X_1^{\kappa_{2q}} (\bar x, \bar \omega) \rightarrow y \neq y' \leftarrow \bar X_1^{\kappa_{2q+1}} (\bar x, \bar\omega).$$
\end{theorem}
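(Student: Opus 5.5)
We plan to reduce Theorem~\ref{thm:autonomous_case} to a repetition of the argument behind Proposition~\ref{prop:preliminary_main_thm}, now applied to the reduced two-dimensional SDE~\eqref{eq:3d_reduced_SDE} with driving noise $Z^\kappa=W+\psi^\kappa$ in place of $W$. Fix $\bar x=(x,\tilde x)\in\bar A$, so that $x\in A$ and $\tilde x\in[0,1-\eps_2]$. The time component is explicit, $\tilde X^\kappa_t=\tilde x+t+\kappa\tilde W_t$; since $\kappa_q\to0$ and $\tilde W$ is a.s. bounded, $\tilde X^{\kappa_q}_1\to\tilde x+1$ a.s. Separation in $\T^3$ therefore has to come from the $x$-component. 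Since $u$ vanishes outside $(1-\eps_2,1)$ in its time variable, the drift $u^{\tilde x}(t,\cdot)=u(\tilde x+t,\cdot)$ is a time shift of the construction of Corollary~\ref{cor:u-arbitrary-small}, active only on $t\in(1-\eps_2-\tilde x,1-\tilde x)\subset[0,1]$. After this shift we work on that window exactly as on $[0,1]$ in Section~\ref{sec:proof_main_thm}. The deterministic comparison flows are $X^q$ associated to $u_q(\tilde x+\cdot,\cdot)$; they inherit~\eqref{eq:A_e}--\eqref{eq:A_o} at the end of the window. After the window the drift is zero, so $X^{\kappa_q}$ just moves by $\kappa_q\times$noise. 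Finally, $|\bar A|\ge(1-\eps_1)(1-\eps_2)\ge1-\eps$ by construction.

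The pathwise parts should carry over verbatim. Propositions~\ref{prop:up_to_tq} and~\ref{prop:stability_after_singularity} hold for any càdlàg $\xi$ with $\|\xi\|_\infty\le q$. We take $\xi=Z^{\kappa_q}(\bar\omega)$ and use~\eqref{eq:pointwise_estimate_Z}: on $\bar\Omega_q=\{C\sup_t|\bar W_t|\le q\}$ we have $\|Z^{\kappa_q}\|_\infty\le q$, and $\bar\P(\liminf_q\bar\Omega_q)=1$. The bound on $\int_{s_q}^{1-s_q}$ uses only $\|u\|_{L^\infty}$ and the size of $\kappa_q\|Z\|_\infty$, so it is unchanged. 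This gives the analogue of~\eqref{eq:estimate-stoch-det} on $\Omega^x_q\cap\bar\Omega_q$.

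The main obstacle is the Borel--Cantelli step. We must bound $\int_{J_{q,1}}u_q\,dr$ (and its reflected counterpart) along $X^{\kappa_q}$, where the noise is now $Z^{\kappa_q}$ rather than the pure process $W$. On $J_{q,1}$ the shear structure still gives $X^{\kappa_q,(2)}_r=Y+\kappa_q W^{(2)}_r+\kappa_q\psi^{\kappa_q,(2)}_r$ with $Y$ being $\cF_{t_q}$-measurable. We plan to apply Lemma~\ref{lem:lucio_v2} with this non-trivial $\psi=\psi^{\kappa_q}$, conditioning on $\tilde W$ through the filtration $\cF_t$ (so that $W$ is an $(\cF_t)$-fBm/stable process and $\tilde W$ is frozen). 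Lemma~\ref{lem:autonomous_apriori2} gives $\llbracket\psi^\kappa\rrbracket_\varsigma\lesssim\sup|\tilde W|$, uniformly in $\kappa$, with $\varsigma=1+\alpha H$ (resp. $1+\alpha/\beta$). The condition $\varsigma>H$ forces the extra requirement $1+\alpha H>H$, and $\theta<\varsigma/H-1=1/H+\alpha-1$ must be compatible with the choice of $\theta$ in the proof of Proposition~\ref{prop:preliminary_main_thm}; this is the point to check, in the same way as Remark~\ref{rem:practical_applications_SSL}. If necessary, we enlarge $\gamma$ (hence shrink $\alpha$'s admissible slack via $\delta$) so that $\theta/2-1+\gamma>0$ still holds. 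The resulting estimate is $\E[\,|\cdot|\,\mid\tilde W]\lesssim (1+\sup|\tilde W|)q^3a_q^{\delta(\theta/2-1+\gamma)}$. On $\{\sup|\tilde W|\le q\}$ the factor $q$ is absorbed, so the probabilities remain summable and Borel--Cantelli applies under $\bar\P$. For stable noise we also need $\beta>1$, as in Lemma~\ref{lem:autonomous_apriori1}.

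With the analogue of~\eqref{eq:stability_preliminary_main_thm} in hand, $X^{\kappa_q}_1(x,\bar\omega)-X^q_1(x)\to0$ $\bar\P$-a.s. Note that the swap parity is built into $u$, so $X^{2q}_1(x)\in F_e$ and $X^{2q+1}_1(x)\in F_o$ (or vice versa) by~\eqref{eq:A_e}--\eqref{eq:A_o}. Together with the convergence of the $\tilde X$-component, this gives the $\liminf$ separation with $c_\eps=2\ell_1$. The subsequence statement then follows by compactness of $\T^3$, exactly as in the proof of Theorem~\ref{th:main_theorem}.
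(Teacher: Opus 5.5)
Your proposal is correct and follows the paper's proof essentially step by step. It has the same reduction to the two-dimensional SDE driven by $Z^\kappa=W+\psi^\kappa$, the same pathwise use of Propositions~\ref{prop:up_to_tq}--\ref{prop:stability_after_singularity} via~\eqref{eq:pointwise_estimate_Z}, and the same Borel--Cantelli step via Lemma~\ref{lem:lucio_v2} with $\psi=\psi^{\kappa_q}$ and Lemma~\ref{lem:autonomous_apriori2}, with $\tilde W$ frozen in $\cF_0$. The deviations are harmless: you localize on $\{\sup_t|\tilde W_t|\le q\}$ where the paper integrates $\sup_t|\tilde W_t|$ under $\tilde\P$, and the compatibility check you flag passes automatically (for $H\ge\frac12$, $\varsigma/H-1>\frac{1}{2H}$ is exactly $\alpha>\alpha_W$, likewise $\varsigma\beta-1>\frac{\beta}{2}$ for stable noise, and for $H<\frac12$ one uses $\llbracket\psi\rrbracket_1\le\llbracket\psi\rrbracket_\varsigma$), so no enlargement of $\gamma$ is needed.
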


\begin{proof}
Let $\bar u$ given as above and $(\kappa_q)_q$ defined as in~\eqref{eq:kappa-q}.
We need to prove that the following holds for a.e. $x\in \T^3$:
\begin{equation}\label{eq:autonomous_case_stability_flows}
        \bar\P\left( \bar\omega\in\bar\Omega: \lim_{q\to\infty} \dist (\bar X_1^{\kappa_q}(\bar x,\bar \omega),\bar X^q_1(\bar x))=0  \right) =1.
    \end{equation}
    
    The proof follows similar arguments to those of Sections~\ref{sec:stability}-\ref{sec:proof_main_thm}, so we mostly sketch it, highlighting the main differences.
    Similarly to~\eqref{eq:3d_reduced_SDE}, the ODE satisfied by $\bar X^q$ can be reduced to a two-dimensional one with drift $(t,x)\mapsto u^q(\tilde x+t,x)$. Given that $\tilde x\in [0,1-\eps_2]$ and the support properties of $u$ and $u^q$, it is straightforward to see that $\bar X_1^q(x,\tilde x)=(X^q_1(x),\tilde x)$.
    The SDE~\eqref{eq:3d_reduced_SDE} has exactly the same structure of the one studied in Sections~\ref{sec:stability}-\ref{sec:proof_main_thm}, up to replacing $W$ with $Z^\kappa$ given in~\eqref{eq:3d_defn_psi}. Therefore,  the proofs of Propositions~\ref{prop:up_to_tq}-\ref{prop:stability_after_singularity} carry over verbatim, taking into account the pointwise estimate~\eqref{eq:pointwise_estimate_Z}.

Let $\tilde t_q := 1-\eps_2 - \tilde{x} + \eps_2 t_q$, for $t_q$ defined in~\eqref{eq:t-q}, to take into account the translation and rescaling of $u$; similarly, $\tilde J_{q,1}:=(1-\eps_2) - \tilde x+\eps_2 J_{q,1}$ is the rescaled version of the interval $J_{q,1}$ defined by~\eqref{eq:defn_J_qi}.
    In order to obtain the estimate~\eqref{eq:autonomous_case_stability_flows}, as in the proof of Proposition~\ref{prop:preliminary_main_thm},
    it remains to control the distance between $\bar X^{\kappa_q}_{1-\tilde t_q}(\bar x,\bar \omega)$ and $\bar X^{\kappa_q}_{\tilde t_q}(\bar x,\bar \omega)$ by pathwise estimates and Borel-Cantelli arguments; it then suffices to show that 
    \begin{equation}\label{eq:autonomous_proof_expectation_goal}
        \E_{\bar \P} \left[ \left| \int_{\tilde J_{q, 1}} u_q(r , X_r^{\kappa_q})\dd r \right| \right] \lesssim \kappa_q^{-\theta} \| u_q\|_{L^\infty(\tilde J_{q,1};C^{-\theta}_x)}.
    \end{equation}
    To obtain~\eqref{eq:autonomous_proof_expectation_goal}, it is convenient to first integrate w.r.t. $\P$ at $\tilde\omega$ fixed, and then integrate further in $\tilde \P$.
    Since $u_q$ is a shear flow on $\tilde J_{q,1}$, by~\eqref{eq:3d_reduced_SDE} we deduce that
    \begin{align*}
         (X_r^{\kappa_q})^{(2)}& =\left( X_{\tilde t_q}^{\kappa_q} -\kappa_q Z^{\kappa_q}_{\tilde t_q} + \kappa_q Z^{\kappa_q}_r \right)^{(2)},\\
         \int_{\tilde J_{q, 1}} u_q(r , X_{r}^{\kappa_q})\dd  r&  = \int_{\tilde J_{q, 1}} \tilde v_{q,1}\left(r, (X_{\tilde t_q}^{\kappa_q})^{(2)} - \kappa_q (Z^{\kappa_q}_{\tilde t_q})^{(2)} + \kappa_q (Z^{\kappa_q}_{r})^{(2)}\right) \dd r \,,
    \end{align*}
    where $\tilde v_{q,1}$ is defined in~\eqref{eq:def_u_half}. 
	We now apply Lemma~\ref{lem:lucio_v2} (cf. Remark~\ref{rem:practical_applications_SSL}) together with the estimate in Lemma~\ref{lem:autonomous_apriori2}  to deduce a pathwise estimate at $\tilde{\omega}$ fixed:
	$$\E \left[ \left| \int_{J_{q, 1}} u_q(r , X_r^{\kappa_q})\dd r \right| \right] \lesssim \kappa_q^{-\theta }  \sup_{t\in [0,1]} |\tilde W_t(\tilde \omega)| \| u_q\|_{L^\infty(\tilde J_{q,1};C^{-\theta}_x)}.$$
	Finally, taking expectation in $\tilde{\mathbb{E}}$ on both sides of the above estimate, we deduce~\eqref{eq:autonomous_proof_expectation_goal} and therefore also~\eqref{eq:autonomous_case_stability_flows}.
    With the latter estimate at hand, the rest of the argument follows exactly as in the proof of Theorem~\ref{th:main_theorem}.
\end{proof}

\appendix
\section{Smoothing estimates for stable Markov semigroups}\label{app:useful}

In the following, let $(P_t)_{t\geq 0}$ be the Markov semigroup associated to a $d$-dimensional Lévy process $L$ with characteristic exponent (symbol) $\Phi$, so that
\begin{align*}
    \E[e^{i \xi \cdot L_t}]= e^{-t \Phi(\xi)}\quad \text{and} \quad
    \widehat{P_t f}(\xi)=e^{-t \Phi(\xi)} \widehat{f}(\xi)
\end{align*}
whenever $f$ is regular enough. Notice in particular that derivatives $\partial_{x_i}$ and $P_t$ commute. 

\begin{lemma}\label{lem:stable_kernel_estimates}
    Let $(P_t)_{t\geq 0}$ be the Markov semigroup associated to a Lévy process $L$; assume that there exists $M>0$ such that
    \begin{equation}\label{eq:stable_kernel_assumption}
        \| D P_t f\|_{C^0(\R^d)} \leq M t^{-\frac{1}{\beta}} \| f\|_{C^0(\R^d)}\quad\forall\, t\in (0,1],\quad \forall\,f \in C^0(\R^d).
    \end{equation}
    Then for any $s_1,s_2\in\R$ with $s_1<s_2$ there exists a constant $C=C(s_1,s_2,M)$ such that
    \begin{equation}\label{eq:stable_kernel_conclusion}
        \|  P_t f\|_{B^{s_2}_{\infty,1}} \leq C t^{-\frac{s_2-s_1}{\beta}} \| f\|_{B^{s_1}_{\infty,\infty}}\quad\forall\, t\in (0,1], \quad \forall\,f \in B^{s_1}_{\infty,\infty}(\R^d).
    \end{equation}
\end{lemma}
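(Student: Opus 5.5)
We use the Littlewood--Paley characterization of Besov norms: $\|g\|_{B^{s}_{p,q}}\simeq \big\|(2^{js}\|\Delta_j g\|_{L^p})_{j\geq -1}\big\|_{\ell^q}$. Since $P_t$ is a Fourier multiplier, it commutes with every block $\Delta_j$. It therefore suffices to bound $\|P_t\Delta_j f\|_{C^0}$ block by block and then sum the resulting geometric series over $j$.

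\textbf{Step 1: two bounds per block.} For every $j$ we have the trivial bound
\begin{equation*}
\|P_t\Delta_j f\|_{C^0}\leq \|\Delta_j f\|_{C^0}\leq 2^{-js_1}\|f\|_{B^{s_1}_{\infty,\infty}},
\end{equation*}
because $P_t$ is a Markov operator and hence a contraction on $C^0$.

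For $j\geq 0$ we want a decaying bound. By Bernstein's inequality, the block $\Delta_j f$ is frequency localized on an annulus of size $2^j$, so $\|g\|_{C^0}\lesssim 2^{-j}\|Dg\|_{C^0}$ holds for $g=P_t\Delta_j f$. Combining this with assumption~\eqref{eq:stable_kernel_assumption} gives
\begin{equation*}
\|P_t\Delta_j f\|_{C^0}\lesssim M\,(2^{j}t^{1/\beta})^{-1}\|\Delta_j f\|_{C^0}.
\end{equation*}
One gain of $(2^jt^{1/\beta})^{-1}$ is not enough, since the regularity gap $s_2-s_1$ may be large. To get arbitrary decay we iterate with the semigroup property: write $P_t=(P_{t/N})^N$. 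Each factor preserves the frequency support of the block, so Bernstein applies at every stage, and each factor yields a gain of $M(2^j (t/N)^{1/\beta})^{-1}$. This gives
\begin{equation*}
\|P_t\Delta_j f\|_{C^0}\lesssim_{N,M}(2^jt^{1/\beta})^{-N}\|\Delta_j f\|_{C^0}
\end{equation*}
for any $N$. We fix $N>s_2-s_1$.

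\textbf{Step 2: summation.} The relevant quantity is
\begin{equation*}
\|P_tf\|_{B^{s_2}_{\infty,1}}=\sum_j 2^{js_2}\|\Delta_jP_tf\|_{C^0},
\end{equation*}
and $\Delta_j P_t f=P_t\Delta_j f$. Split the sum at the index $j_0$ defined by $2^{j_0}\sim t^{-1/\beta}$.

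For $j\leq j_0$ (including the low-frequency block $j=-1$) use the trivial bound. The terms are $2^{j(s_2-s_1)}\|f\|_{B^{s_1}_{\infty,\infty}}$, whose sum is $\lesssim 2^{j_0(s_2-s_1)}\|f\|_{B^{s_1}_{\infty,\infty}}\sim t^{-(s_2-s_1)/\beta}\|f\|_{B^{s_1}_{\infty,\infty}}$. Here $s_2>s_1$ is used so the series is dominated by its largest term; for $t\leq1$ the contribution of $j=-1$ is also bounded by the right-hand side.

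For $j>j_0$ use the decaying bound. The terms are $2^{j(s_2-s_1)}(2^jt^{1/\beta})^{-N}\|f\|_{B^{s_1}_{\infty,\infty}}$, and since $N>s_2-s_1$ this series is geometric and dominated by its first term, which is again $\sim t^{-(s_2-s_1)/\beta}\|f\|_{B^{s_1}_{\infty,\infty}}$.

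Adding the two ranges gives~\eqref{eq:stable_kernel_conclusion}, with a constant depending only on $s_1,s_2,M$ (through $N$ and Bernstein's constants).

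\textbf{Main obstacle.} The delicate point is the reverse Bernstein inequality $\|g\|_{C^0}\lesssim 2^{-j}\|Dg\|_{C^0}$. It holds for functions spectrally supported away from the origin, which is the case for $j\geq0$, so the block $j=-1$ must be handled separately by the trivial bound, as above.

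A second technical point is that assumption~\eqref{eq:stable_kernel_assumption} is stated for $f\in C^0$, while $\Delta_j f$ may only be bounded and continuous. This is fine, since Littlewood--Paley blocks of $f\in B^{s_1}_{\infty,\infty}$ are smooth and bounded.
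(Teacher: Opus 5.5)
Your proof is correct. It shares the paper's engine, namely iterating the gradient bound \eqref{eq:stable_kernel_assumption} through $P_t=(P_{t/N})^N$ and using Bernstein's inequality on Littlewood--Paley blocks. It organizes that engine differently and reaches the $B^{s_2}_{\infty,1}$ norm by a different mechanism.

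The paper works globally and then upgrades:
\begin{itemize}
\item it first proves $\|D^nP_tf\|_{C^0}\le M^n n^{n/\beta}t^{-n/\beta}\|f\|_{C^0}$;
\item it interpolates this into the smoothing bound $\|P_tf\|_{C^s}\lesssim t^{-s/\beta}\|f\|_{C^0}$;
\item it applies that bound to each block, which gives only the $B^{s_2}_{\infty,\infty}$ estimate (valid for all $s_2\geq s_1$);
\item it passes to $B^{s_2}_{\infty,1}$ via the real interpolation result \cite[Thm.~2.80]{BCD2011}, and this is the step that uses $s_1<s_2$.
\end{itemize}

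You instead prove the frequency-localized decay $\|P_t\Delta_jf\|_{C^0}\lesssim (2^jt^{1/\beta})^{-N}\|\Delta_jf\|_{C^0}$, applying the reverse Bernstein inequality at each of the $N$ stages. You then sum directly in $\ell^1$, splitting at $2^{j_0}\sim t^{-1/\beta}$, and the strict inequality $s_1<s_2$ enters through the geometric sum over low frequencies.

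Your route is more elementary and self-contained: it needs neither H\"older interpolation nor an abstract Besov interpolation theorem. It works block by block for arbitrary $f\in B^{s_1}_{\infty,\infty}$, with no reduction to Schwartz functions (which are not dense there). It also gives an explicit constant of order $(C_{d} M N^{1/\beta})^N$ with $N>s_2-s_1$. The paper's route produces the global $C^0\to C^s$ smoothing estimate as a by-product.

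One point deserves a line of justification in your write-up: why $P_s\Delta_jf$ stays spectrally supported in the annulus when the block is merely bounded. Write $\Delta_jf=\check{\tilde\varphi}_j\star\Delta_jf$, with $\tilde\varphi_j$ a smooth cutoff equal to one on the support of $\varphi_j$. Convolution with the law of $L_s$ commutes with convolution by $\check{\tilde\varphi}_j$ (Fubini, since everything is absolutely integrable). Hence $P_s\Delta_jf=\check{\tilde\varphi}_j\star(P_s\Delta_jf)$ has Fourier support in the annulus, and reverse Bernstein applies at every stage.
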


\begin{proof}
    The statement is a refinement of~\cite[Prop. 3.3]{BDG2025}, so we shortly sketch the proof.

    Without loss of generality, we may assume $f$ to be a Schwartz function.
    Let $n\geq 2$; using the Markov semigroup property $P_t=P_{t/n}\circ \ldots \circ P_{t/n}$, the fact that $D$ and $P_t$ commute and estimate~\eqref{eq:stable_kernel_assumption}, one finds
    \begin{align*}
        \| D^n P_t f\|_{C^0(\R^d)}
        = \| (D P_{t/n})\circ \ldots (D P_{t/n}) f\|_{C^0(\R^d)} \leq M^n n^{n/\beta} t^{-\frac{n}{\beta}} \| f\|_{C^0(\R^d)}.
    \end{align*}
    Since $P_t$ is a Markov kernel, we always have $\| P_t f\|_{C^0(\R^d)} \leq \| f\|_{C^0(\R^d)}$; interpolating with the above estimate, one gets
    \begin{equation}\label{eq:intermediate_kernel_estim}
        \| P_t f\|_{C^s(\R^d)} \lesssim t^{-\frac{s}{\beta}} \| f\|_{C^0(\R^d)}.
    \end{equation}
    Next, using the definition of Littlewood--Paley blocks and applying Bernstein's inequalities and estimate~\eqref{eq:intermediate_kernel_estim} to each $\Delta_j f$, we have
    \begin{align*}
        \| P_t f\|_{B^{s_2}_{\infty,\infty}}
        & \sim \| \Delta_{-1} P_t f\|_{C^0(\R^d)} + \sup_{j\geq 0} 2^{j s_2}\| \Delta_j P_t f\|_{C^0(\R^d)}\\
        & \sim \| \Delta_{-1} P_t f\|_{C^0(\R^d)} + \sup_{j\geq 0} 2^{j s_1}\| P_t \Delta_j  f\|_{C^{s_2-s_1}(\R^d)}\\
        & \lesssim \| \Delta_{-1} f\|_{C^0(\R^d)} + t^{-\frac{s_2-s_1}{\beta}} \sup_{j\geq 0} 2^{j s_1}\| \Delta_j  f\|_{C^0(\R^d)}
        \lesssim t^{-\frac{s_2-s_1}{\beta}} \| f\|_{B^{s_1}_{\infty,\infty}}
    \end{align*}
    where the estimate is valid for any $s_2 \geq s_1$.
    The conclusion with $B^{s_2}_{\infty,\infty}$ replaced by $B^{s_2}_{\infty,1}$ finally follows by applying the interpolation estimate from~\cite[Thm. 2.80]{BCD2011}.
\end{proof}

\begin{remark}\label{rem:heat_kernel_estimates}
    Let $\mathcal{P}_t$ be the standard heat kernel on $\R^d$; it is well-known that it satisfies~\eqref{eq:stable_kernel_assumption} with $\beta=2$. Therefore in this case estimate~\eqref{eq:stable_kernel_conclusion} recovers the classical heat-kernel bound
    \begin{equation}\label{eq:heat_kernel_estimate}
        \|  \mathcal{P}_t f\|_{B^{s_2}_{\infty,1}} \leq C t^{-\frac{s_2-s_1}{2}} \| f\|_{B^{s_1}_{\infty,\infty}}\quad\forall\, t\in (0,1].
    \end{equation}
    In the case of $\beta$-stable processes, it's easy to check condition~\eqref{eq:stable_kernel_assumption} by a scaling argument; for a more general criterion based on the symbol $\Phi$, see~\cite[Thm. 1.3]{SSW2012}.
\end{remark}

\section*{Acknowledgments} 
    
    LG acknowledges support from the Istituto Nazionale di Alta Matematica (INdAM) through the project GNAMPA 2025 ``Modelli stocastici in Fluidodinamica e Turbolenza'' and the project GNAMPA 2026 ``Fluidodinamica stocastica: irregolarità, trasporto e fenomeni di regolarizzazione''. 
    FG acknowledges support from Imperial College London through the PhD Roth Scholarship and his PhD supervisor Dan Crisan for his continuous guidance. MS acknowledges support from the Chapman Fellowship at Imperial College London.
    
    This article is published with funding from Università degli Studi dell'Aquila under the Call for Proposals for Fundamental research and Early-career research grants - Year 2026. (Italian version: L'articolo è pubblicato con il contributo dell’Università degli Studi dell'Aquila nell'ambito dell'Avviso per la presentazione di Progetti di Ateneo per la Ricerca di base e Avvio alla Ricerca - anno 2026).

    The authors are grateful to Enrico Priola for useful discussions concerning existing results for SDEs driven by Lévy processes.

\bibliographystyle{plain}
\bibliography{myBiblio}

\end{document}